\documentclass[11pt,reqno]{amsart}

\usepackage{amsmath,amsthm,amssymb}
\usepackage{tikz}
\usepackage{ytableau}
\ytableausetup{centertableaux,boxsize=1.2em}
\usepackage{capt-of}
\usepackage[hidelinks]{hyperref}
\hypersetup{pdftitle={Rectangular Symmetry for Catalan Functions},pdfauthor={Hyeonjae Choi}}
\usepackage[top=1in,bottom=1in,left=1in,right=1in]{geometry}
\usepackage{cleveref}
\usepackage[backend=biber,bibstyle=trad-plain,citestyle=numeric,sorting=nyt,giveninits=false,maxbibnames=99,doi=false,url=false,isbn=false,eprint=false]{biblatex}
\DefineBibliographyExtras{english}{}
\AtEveryBibitem{\ifentrytype{book}{\clearfield{pages}\clearfield{pagetotal}}{}}
\numberwithin{equation}{section}

\newtheorem{thm}{Theorem}[section]
\newtheorem{lem}[thm]{Lemma}
\newtheorem{prop}[thm]{Proposition}

\theoremstyle{definition}
\newtheorem{defn}[thm]{Definition}
\newtheorem{exmp}[thm]{Example}
\theoremstyle{remark}
\newtheorem{rmk}[thm]{Remark}

\crefname{thm}{Theorem}{Theorems}
\crefname{lem}{Lemma}{Lemmas}
\crefname{prop}{Proposition}{Propositions}
\crefname{cor}{Corollary}{Corollaries}
\crefname{defn}{Definition}{Definitions}
\crefname{exmp}{Example}{Examples}
\crefname{rmk}{Remark}{Remarks}
\crefname{section}{Section}{Sections}
\crefname{subsection}{Section}{Sections}
\crefname{equation}{Equation}{Equations}
\crefname{figure}{Figure}{Figures}
\Crefname{thm}{Theorem}{Theorems}
\Crefname{lem}{Lemma}{Lemmas}
\Crefname{prop}{Proposition}{Propositions}
\Crefname{cor}{Corollary}{Corollaries}
\Crefname{defn}{Definition}{Definitions}
\Crefname{exmp}{Example}{Examples}
\Crefname{rmk}{Remark}{Remarks}
\Crefname{section}{Section}{Sections}
\Crefname{subsection}{Section}{Sections}
\Crefname{equation}{Equation}{Equations}
\Crefname{figure}{Figure}{Figures}
\crefformat{equation}{(#2#1#3)}
\Crefformat{equation}{Equation #2(#1)#3}
\AddToHook{env/lem/begin}{\crefalias{thm}{lem}}
\AddToHook{env/prop/begin}{\crefalias{thm}{prop}}
\AddToHook{env/cor/begin}{\crefalias{thm}{cor}}
\AddToHook{env/exmp/begin}{\crefalias{thm}{exmp}}
\AddToHook{env/defn/begin}{\crefalias{thm}{defn}}
\AddToHook{env/rmk/begin}{\crefalias{thm}{rmk}}

\newcommand{\SSYT}{\operatorname{SSYT}}
\newcommand{\charge}{\operatorname{charge}}
\newcommand{\cocharge}{\operatorname{cocharge}}
\newcommand{\rw}{\operatorname{rw}}
\newcommand{\width}{\operatorname{width}}
\newcommand{\kat}{\operatorname{kat}}
\newcommand{\cat}{\operatorname{cat}}
\newcommand{\rot}{\operatorname{rot}}
\newcommand{\sgn}{\operatorname{sgn}}
\newcommand{\inv}{\operatorname{inv}}
\newcommand{\dec}{\operatorname{dec}}
\newcommand{\wt}{\operatorname{wt}}
\newcommand{\sh}{\operatorname{sh}}
\newcommand{\evec}{\mathbf{e}}
\newcommand{\dvec}{\mathbf{d}}
\newcommand{\tset}{\mathcal{T}}
\newcommand{\F}{\mathcal{F}}
\newcommand{\id}{\mathrm{id}}

\title[Rectangular symmetry for Catalan functions]{Rectangular Symmetry for Catalan Functions}

\author{Hyeonjae Choi}
\address{Department of Mathematics and Institute of Basic Science \\ Sungkyunkwan University \\Suwon \\ Korea}
\email{hyeonjaechoi@skku.edu}

\subjclass[2020]{Primary 05E05, 05E10; Secondary 05A30, 17B37}
\keywords{Catalan functions, root ideals, katabolizable tableaux, rectangular symmetry, column complementation, combinatorial $R$-matrix}

\date{}

\begin{document}

\begin{abstract}
Katabolizable tableaux provide a combinatorial formula for the Schur coefficients of Catalan functions.
We prove that a tableau involution preserves katabolizability under antidiagonal reflection of any upper root ideal.
This involution complements shape and content in a fixed rectangle and preserves charge.
We obtain a bijective proof of the rectangular symmetry of these coefficients.
\end{abstract}

\maketitle

\section{Introduction}\label{sec:intro}

Catalan functions were studied by Blasiak, Morse, Pun, and Summers in their work on $k$-Schur functions~\cite{BlaMorPunSum2019}.
They generalize modified Hall-Littlewood functions and have a geometric interpretation in terms of $\mathrm{GL}_n$-equivariant Euler characteristics on the flag variety~\cite[Theorem~3.1]{BlaMorPunSum2019}.

Let an upper root ideal $\Psi \subseteq \Delta_n^+ = \{(i, j) : 1 \leq i < j \leq n\}$ and a partition $\mu = (\mu_1, \ldots, \mu_n)$ be given.
We define the Catalan function by
\begin{equation}\label{eq:catalan-definition}
	H(\Psi; \mu) = \prod_{(i, j) \in \Psi}(1 - qR_{i, j})^{-1}s_\mu,
\end{equation}
where $s_\mu$ is the Schur function.
For $\alpha \in \mathbb Z^n$, $s_\alpha$ is defined by the Jacobi-Trudi formula.
The raising operator $R_{i, j}$ acts on the subscript by $\alpha \mapsto \alpha + \evec_i - \evec_j$, where $\evec_i$ is the $i$th coordinate vector.
All index shifts are applied before evaluating the Jacobi-Trudi determinant.

We define $K^\Psi_{\lambda, \mu}(q)$ as the coefficient of $s_\lambda$ in the Catalan function $H(\Psi; \mu)$, i.e., $H(\Psi; \mu) = \sum_\lambda K^\Psi_{\lambda, \mu}(q)s_\lambda$.
Blasiak, Morse, and Pun gave the combinatorial formula~\cite[Theorem~2.18]{BlaMorPun2024}
\[
	K^\Psi_{\lambda, \mu}(q) = \sum_{T \in \tset_\Psi(\lambda, \mu)}q^{\charge(T)},
\]
where $\tset_\Psi(\lambda, \mu)$ consists of semistandard tableaux of shape $\lambda$ and content $\mu$ satisfying the \emph{$\Psi$-katabolizability} condition, which we recall in \cref{subsec:catalan}.
Here, $\charge$ is the Lascoux-Sch\"utzenberger charge statistic~\cite{LasSch1978}.

We consider partitions $\lambda, \mu \subseteq (k^n)$ with $|\lambda| = |\mu|$ and append zeros so that both have $n$ parts.
The rectangular complements $\lambda^\perp$ and $\mu^\perp$ are defined by
\[
	\lambda^\perp = (k - \lambda_n, \ldots, k - \lambda_1),
	\qquad
	\mu^\perp = (k - \mu_n, \ldots, k - \mu_1).
\]
We view $\Delta_n^+$ as the boxes above the diagonal of an $n \times n$ matrix and write
\[
	\Psi^\dagger = \{(n + 1 - j, n + 1 - i) : (i, j) \in \Psi\}
\]
for the reflection of $\Psi$ in the antidiagonal of the matrix.

The coefficients $K^\Psi_{\lambda, \mu}(q)$ satisfy the following rectangular symmetry:
\begin{equation}\label{eq:intro-catalan-symmetry}
	K^\Psi_{\lambda, \mu}(q)
		= K^{\Psi^\dagger}_{\lambda^\perp, \mu^\perp}(q).
\end{equation}
Shimozono and Weyman proved this identity for parabolic root ideals, whose boxes are exactly those above a block diagonal~\cite[Section~2.8, (2.16)]{ShiWey2000}.
Their algebraic argument also applies to arbitrary upper root ideals.

For the full root ideal, $K^{\Delta_n^+}_{\lambda, \mu}(q) = K_{\lambda, \mu}(q)$, so this identity reduces to the rectangular symmetry of Kostka-Foulkes polynomials.
Briand, Orellana, and Rosas gave a proof using symmetric functions and studied related symmetries for other coefficients~\cite{BriOreRos2015}.

\emph{Column complementation} in the rectangle $(k^n)$ replaces each of the $k$ columns by its missing letters from $[n] = \{1, \ldots, n\}$, listed in increasing order, and reverses the column order~\cite[Section~3.3]{LasSch1981}.
The Lascoux-Sch\"utzenberger (LS) action of the longest permutation $w_0(i) = n + 1 - i$ reverses the content vector and preserves shape.
We define $\Phi_{n, k}$ by column complementation followed by the LS action of $w_0$.
This involution complements shape and content in $(k^n)$ and preserves charge~\cite[Property~5.3(iii)]{LasSch1981}.

When $\Psi$ is a parabolic root ideal and $\mu$ is constant on each block, Kirillov and Shimozono proved that column complementation preserves column catabolizability~\cite[Section~8, Proposition~10]{KirShi2002}.
For an arbitrary upper root ideal $\Psi$, we prove that $\Phi_{n, k}$ restricts to a bijection between $\tset_\Psi(\lambda, \mu)$ and $\tset_{\Psi^\dagger}(\lambda^\perp, \mu^\perp)$.

\begin{thm}\label{thm:main}
Let $\Psi \subseteq \Delta_n^+$ be an upper root ideal and $T$ a semistandard tableau with entries in $[n]$, partition content $\mu$, and width at most $k$.
Then $T$ is $\Psi$-katabolizable if and only if $\Phi_{n, k}T$ is $\Psi^\dagger$-katabolizable.
\end{thm}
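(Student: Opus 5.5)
The plan is to argue by induction on $n$, using the recursive definition of $\Psi$-katabolizability from \cref{subsec:catalan}. In that recursion one tests the first row of $\Psi$ against the letters $1$ of $T$, applies the katabolism $\kat$, which deletes the $1$'s and rectifies, and passes to the smaller root ideal obtained by deleting the first row and column of $\Psi$. Under antidiagonal reflection, the first row of $\Psi$ becomes the last column of $\Psi^\dagger$. Likewise, removing the first row and column of $\Psi$ corresponds to removing the last row and column of $\Psi^\dagger$. So the direct recursion for $\Phi_{n,k}T$ peels off the wrong end. The first step is therefore to establish a \emph{dual} characterization: $T$ is $\Psi$-katabolizable if and only if a ``right-hand'' katabolism condition holds. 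This condition tests the last column of $\Psi$ against the largest letter $n$ and then recurses on $\Psi$ with its last row and column deleted. I would prove this dual characterization by conjugating the usual recursion by the LS action of $w_0$, which reverses content, together with the fact that $w_0$-action commutes with rectification. I would also check by a separate induction that the two recursions, peeling from either end, commute with each other.

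The second step is to show that $\Phi_{n,k}$ intertwines the two recursions. Concretely, I would prove $\Phi_{n-1,k}(\kat T) = \kat^\vee(\Phi_{n,k}T)$ up to the relabelling $i \mapsto i-1$, where $\kat^\vee$ denotes the dual (letter-$n$) katabolism. I would also show that the test at the first row of $\Psi$ for $T$ is equivalent to the test at the last column of $\Psi^\dagger$ for $\Phi_{n,k}T$. On words, column complementation acts as ``complement each column in $[n]$ and reverse''. The key facts I would use are that this operation sends Knuth-equivalent column words to Knuth-equivalent column words, so that it commutes with rectification, and that the LS action of $w_0$ is a crystal-theoretic operation, so it also commutes with rectification and skew restriction. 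Deleting the letter $1$ from $T$ corresponds, after complementation, to filling in the letter $1$ in every column missing it. After applying $w_0$, this becomes a statement about the letters $n$. The row-length tests then translate because $\mu_1$ ones in the first row correspond to $k-\mu_1 = \mu^\perp_n$ columns containing no $n$ after complementation.

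With these two lemmas, the induction closes. The base case $n=1$ is trivial. In the inductive step, $T$ is $\Psi$-katabolizable if and only if the first test holds and $\kat T$ is katabolizable for the reduced ideal. By the second step this is equivalent to the dual test holding for $\Phi_{n,k}T$ with respect to $\Psi^\dagger$, together with $\kat^\vee\Phi_{n,k}T$ being katabolizable for the reduced $\Psi^\dagger$; here the induction hypothesis is applied at size $n-1$. By the first step, this is exactly $\Psi^\dagger$-katabolizability of $\Phi_{n,k}T$. Charge preservation is already known from the cited result, so no extra work is needed there.

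The main obstacle I expect is the second step. The difficulty is making precise how column complementation, which is defined only for tableaux inside the $k\times n$ rectangle, interacts with the skew pieces and rectifications occurring in $\kat$. The intermediate tableaux have $n-1$ letters, the width bound $k$ must be maintained, and the $w_0$-action has to be applied at two different ranks $n$ and $n-1$. I would first try to handle this by working entirely with column reading words and the plactic monoid. On that level column complementation is an anti-automorphism of the relevant plactic quotient, so commutation with rectification becomes formal. The remaining work would then reduce to bookkeeping of how letters $1$ and $n$ sit in each column.
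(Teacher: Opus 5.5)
There is a genuine gap at the start of your plan. The one-step recursion ``local test on the $1$'s, then a smaller \emph{tableau} $\kat T$ for $\Psi_{>1}$'' does not exist for the katabolizability of \cref{def:katabolizable}.

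\textbf{Why the recursion fails.} In that definition, $F_{d_1, n}$ inserts only rows $d_1, \ldots, n$. So $A^{(1)}$ is a tabloid that is in general not semistandard (\cref{ex:katabolism}). Later first-row conditions depend on its unrectified rows, while the first-row condition for $A^{(0)} = T$ is vacuous. A concrete counterexample is in \cref{ex:charge-not-enough}:
\begin{itemize}
\item $T_2$ and $T_3$ have the same shape, content and $1$'s.
\item $T_3$ is $\Psi$-katabolizable and $T_2$ is not.
\item Yet $P(F_{2, 4}T_2) = \langle 113; 2\rangle$ is $\Psi_{>1}$-katabolizable, and so is $\dec$ of the rectification of $T_2$ with its $1$'s removed, namely $\langle 11; 23\rangle$.
\end{itemize}
The failure comes from the entry $2$ in the first row of $T_2$. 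The operator $\kat$ turns it into a $1$, which is inserted only as far as row $d_1 = 2$. If instead you keep the tabloid, neither your induction hypothesis nor $\Phi_{n - 1, k}$ applies to it.

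\textbf{The dual recursion.} It has the same defect, and conjugating by $w_0$ cannot produce it. The LS action of $w_0$ is compatible with insertion, but the letters $n$ of $w_0 \cdot V$ do not correspond positionally to the $1$'s of $V$. Already $\Phi_{2, 2}\langle 12\rangle = \langle 12\rangle$ has its $2$ in the column where $D_{2, 2}\langle 12\rangle$ has no $1$. Incidentally, $\mu_1$ of the padded columns of $\Phi_{n, k}T$ contain no $n$, not $k - \mu_1$.

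\textbf{What is true at the tableau level.} \Cref{lem:prefix-reduction} peels off the letter $n$. In addition,
\[
(\Phi_{n, k}T)^{\leq n - 1} = \Phi_{n - 1, k}\widehat T_{n - 1}, \qquad \widehat{(\Phi_{n, k}T)}_{n - 1} = \Phi_{n - 1, k}(T^{\leq n - 1}).
\]
So the involution exchanges restriction with $\widehat T_{n - 1} = w_0^{(n - 1)} \cdot ((w_0 \cdot T)^{\leq n - 1})$, not katabolism with a dual katabolism. The intertwining you have in mind does hold for $D_{n, k}$ and column catabolism (\cref{lem:complement-catabolism}), but that is not the katabolism used here.

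Connecting $\widehat T_{n - 1}$ to the katabolism sequence of $T$ is the real work, and it needs affine input:
\begin{itemize}
\item the Demazure property of $\mathcal D_{\Psi, \mu}$ along $0$-strings (\cref{lem:dark-zero});
\item the $R$-matrix argument of \cref{prop:strict-carrier}, applied with $L = \mu_1 > \mu_2$.
\end{itemize}

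\textbf{Strictness is essential.} Even with this input, the two-sided criterion of \cref{prop:two-sided-strict} holds only for strictly decreasing content. \Cref{rem:non-strict-criterion} gives $T = \langle 134; 2\rangle$ of content $(1, 1, 1, 1)$ such that $T^{\leq 3}$ and $\widehat T_3$ are katabolizable for the smaller ideals, but $T$ is not $\Psi$-katabolizable. So for arbitrary partition content, the induction on $n$ cannot be closed by a two-sided recursion alone.

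You need a second ingredient. The paper's is that all operations are piecewise-linear in the crystal parameters. Hence vanishing of the failure count $\eta$ at the points $\beta_t = t\beta_0 + \gamma$, whose content $t\mu + \rho$ is strictly decreasing, forces $\eta(\beta_0) = 0$.
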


As a corollary, we obtain a combinatorial proof of \eqref{eq:intro-catalan-symmetry} using charge preservation.

\subsection{Organization}

This paper is organized as follows.
In \cref{sec:algebraic-symmetry}, we give an algebraic proof of \eqref{eq:intro-catalan-symmetry}.
In \cref{sec:tableaux}, we recall the column complementation map and give an alternative proof of charge preservation.
In \cref{sec:katabolizability}, we recall katabolizable tableaux and their correspondence with classical highest-weight elements of DARK crystals.
In \cref{sec:dark-proof}, we prove \cref{thm:main} first for strictly decreasing content and then extend it to arbitrary partition content.

\section{Algebraic proof of rectangular symmetry}\label{sec:algebraic-symmetry}

In this paper, we assume $n \geq 1$ and write $[m] = \{1, \ldots, m\}$ for $m \geq 0$.
We identify each positive root $\evec_i - \evec_j$ of type $A_{n - 1}$ with the pair $(i, j)$, where $\evec_i$ is the $i$th coordinate vector.
We write $\Delta_n^+ = \{(i, j) : 1 \leq i < j \leq n\}$ for the set of positive roots.
An \emph{upper root ideal} is a subset $\Psi \subseteq \Delta_n^+$ such that $(i, j) \in \Psi$ implies $(r, s) \in \Psi$ whenever $1 \leq r \leq i < j \leq s \leq n$.
All root ideals in this paper are upper root ideals.

The antidiagonal reflection of $\Psi$ is $\Psi^\dagger = \{(n + 1 - j, n + 1 - i) : (i, j) \in \Psi\}$.
It is again an upper root ideal, and $(\Psi^\dagger)^\dagger = \Psi$.
For $k \geq 0$ and a partition $\lambda \subseteq (k^n)$, we append zeros to give $n$ parts.
We define its rectangular complement by $\lambda^\perp = (k - \lambda_n, \ldots, k - \lambda_1)$.
The longest element $w_0$ of $\mathfrak S_n$ acts on $\mathbb Z^n$ by reversing coordinates.
We write $1^n = (1, \ldots, 1)$.

For $\alpha \in \mathbb Z^n$, we define $s_{\alpha} = \det(h_{\alpha_i - i + j})_{1 \leq i, j \leq n}$, where $h_r$ is the complete homogeneous symmetric function of degree $r$, with $h_0 = 1$ and $h_r = 0$ for $r < 0$.
In \eqref{eq:catalan-definition}, we apply all index shifts before evaluating this determinant.
The Jacobi-Trudi formula gives the straightening rule $s_{\alpha} = \sgn(w)s_\lambda$ whenever $\alpha + \rho = w(\lambda + \rho)$ for a partition $\lambda$ and $w \in \mathfrak S_n$, where $\rho = (n - 1, \ldots, 0)$.
If no such partition exists, then $s_{\alpha} = 0$.

\begin{prop}\label{prop:algebraic-duality}
For an upper root ideal $\Psi \subseteq \Delta_n^+$ and partitions $\lambda, \mu \subseteq (k^n)$ with $|\lambda| = |\mu|$,
we have $K^\Psi_{\lambda, \mu}(q) = K^{\Psi^\dagger}_{\lambda^\perp, \mu^\perp}(q)$.
\end{prop}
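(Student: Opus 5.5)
Expand \eqref{eq:catalan-definition} as a sum over multisets of roots and transform each Jacobi--Trudi term by a determinant-level duality. Writing $\prod_{(i,j)\in\Psi}(1-qR_{i,j})^{-1} = \sum_{m} q^{|m|} \prod R_{i,j}^{m_{ij}}$ over $m\colon \Psi\to\mathbb Z_{\ge 0}$, we get
\[
	H(\Psi;\mu) = \sum_{m} q^{|m|}\, s_{\mu + \sum m_{ij}(\evec_i-\evec_j)}.
\]
The antidiagonal reflection sends $(i,j)$ to $(n+1-j,n+1-i)$, and $w_0(\evec_i - \evec_j) = \evec_{n+1-i}-\evec_{n+1-j} = -(\evec_{n+1-j}-\evec_{n+1-i})$. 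Hence for $\beta = \sum m_{ij}(\evec_i-\evec_j)$, the corresponding multiset $m^\dagger$ on $\Psi^\dagger$ (with the same $|m|$) gives shift $\beta^\dagger = -w_0\beta$. Since $\mu^\perp = k1^n - w_0\mu$, we obtain
\[
	\mu^\perp + \beta^\dagger = k1^n - w_0(\mu+\beta).
\]
So the whole proposition reduces to the \emph{duality lemma}: for every $\alpha\in\mathbb Z^n$ with $|\alpha| = |\mu|$, the coefficient of $s_\lambda$ in $s_\alpha$ equals the coefficient of $s_{\lambda^\perp}$ in $s_{k1^n - w_0\alpha}$, for all $\lambda \subseteq (k^n)$.

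To prove the lemma, I will use the straightening rule. Suppose $\alpha+\rho = w(\nu+\rho)$ for a partition $\nu$. Put $\alpha' = k1^n - w_0\alpha$. Then
\[
	\alpha'+\rho = k1^n - w_0\alpha + \rho = k1^n - w_0(\alpha+\rho) + (\rho + w_0\rho).
\]
Since $\rho + w_0\rho = (n-1)1^n$, this becomes $(k+n-1)1^n - w_0 w(\nu+\rho)$. Writing $w_0 w = (w_0 w w_0) w_0$, we have $w_0(\nu+\rho) = (\nu_n, \nu_{n-1}+1, \ldots, \nu_1 + n-1)$. Hence $(k+n-1)1^n - w_0(\nu+\rho)$ has $i$th coordinate $k - \nu_{n+1-i} + (n-i) = (\nu^\perp + \rho)_i$, where $\nu^\perp$ is defined formally by the same formula. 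Therefore
\[
	\alpha'+\rho = (w_0ww_0)(\nu^\perp+\rho),
\]
and $\operatorname{sgn}(w_0ww_0) = \operatorname{sgn}(w)$. If instead $\alpha+\rho$ has a repeated coordinate, so does $\alpha'+\rho$, and both straighten to $0$.

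The main obstacle is that $\nu^\perp$ is a genuine partition only when $\nu \subseteq (k^n)$. If $\nu_1 > k$, then $\nu^\perp$ has a negative last part, and $s_{\nu^\perp}$ under the determinant definition is typically $0$ or some other straightened term rather than anything indexed by $\lambda^\perp$. I will handle this by noting that $\nu^\perp$ is always weakly decreasing, so $\nu^\perp+\rho$ is strictly decreasing. By the straightening rule, $s_{\alpha'} = \operatorname{sgn}(w) s_{\nu^\perp}$, and this vanishes when $\nu^\perp_n<0$, because the Jacobi--Trudi determinant then has a zero last row. Symmetrically, if $\nu \not\subseteq (k^n)$, then $\nu$ does not contribute to the coefficient of $s_\lambda$ for $\lambda\subseteq(k^n)$ anyway. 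So the contributions to the coefficient of $s_\lambda$ on the left match those to the coefficient of $s_{\lambda^\perp}$ on the right term by term, via $\nu\mapsto\nu^\perp$. This is an involution on partitions inside $(k^n)$.

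Finally, I sum over $m$, using that $m\mapsto m^\dagger$ is a bijection from multisets on $\Psi$ to multisets on $\Psi^\dagger$ preserving $|m|$, and that $(\Psi^\dagger)^\dagger=\Psi$. This yields $K^\Psi_{\lambda,\mu}(q)=K^{\Psi^\dagger}_{\lambda^\perp,\mu^\perp}(q)$. Both sides are well-defined power series in $q$: only finitely many $m$ contribute to a given $s_\lambda$ of fixed degree, or else the identity is read as an identity of power series coefficientwise.
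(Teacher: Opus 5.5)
Your proposal is correct and follows essentially the same route as the paper, which packages the straightening into $K^\Psi_{\lambda,\mu}(q)=\sum_{w\in\mathfrak S_n}\sgn(w)\,\mathcal P_\Psi\bigl(w(\lambda+\rho)-(\mu+\rho);q\bigr)$ and then uses your two ingredients: the reflection $\beta\mapsto-w_0\beta$ of root multisets, and the identity $\lambda^\perp+\rho=(k+n-1)1^n-w_0(\lambda+\rho)$ together with the sign-preserving conjugation $w\mapsto w_0ww_0$. One small point: your case split in the duality lemma skips the case where $\alpha+\rho$ has distinct entries but sorts to a weakly decreasing $\nu$ with $\nu_n<0$; your ``symmetrically'' remark covers it, but it is cleaner to note that in every case the coefficient of $s_\lambda$ in $s_\alpha$ is $\sgn(w)$ if $\nu=\lambda$ and $0$ otherwise, and that $\nu=\lambda$ if and only if $\nu^\perp=\lambda^\perp$.
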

\begin{proof}
For $\beta \in \mathbb Z^n$, we write $x^{\beta} = x_1^{\beta_1}\cdots x_n^{\beta_n}$ and define $\mathcal P_\Psi(\beta; q)$ by
\begin{equation*}
	\prod_{(i, j) \in \Psi}(1 - qx_i/x_j)^{-1}
	= \sum_{\beta \in \mathbb Z^n}\mathcal P_\Psi(\beta; q)x^{\beta}.
\end{equation*}
Since $H(\Psi; \mu) = \sum_{\beta \in \mathbb Z^n}\mathcal P_\Psi(\beta; q)s_{\mu + \beta}$, the straightening rule gives
\begin{equation}\label{eq:alternating-formula}
	K^\Psi_{\lambda, \mu}(q)
	= \sum_{w \in \mathfrak S_n}\sgn(w)\,
		\mathcal P_\Psi\bigl(w(\lambda + \rho) - (\mu + \rho); q\bigr).
\end{equation}
The map $-w_0$ sends each root $\evec_i - \evec_j$ of $\Psi$ to the root $\evec_{n + 1 - j} - \evec_{n + 1 - i}$ of $\Psi^\dagger$.
This map induces a bijection between root decompositions and preserves the number of roots, so
\begin{equation}\label{eq:partition-reflection}
	\mathcal P_{\Psi^\dagger}(-w_0\beta; q) = \mathcal P_\Psi(\beta; q).
\end{equation}
For a partition $\lambda \subseteq (k^n)$, we have
\[
	\lambda^\perp + \rho = (k + n - 1)1^n - w_0(\lambda + \rho).
\]
Thus, for $w \in \mathfrak S_n$,
\[
	(w_0ww_0)(\lambda^\perp + \rho) - (\mu^\perp + \rho)
	= -w_0\bigl(w(\lambda + \rho) - (\mu + \rho)\bigr).
\]
Since $w \mapsto w_0ww_0$ is a sign-preserving bijection of $\mathfrak S_n$, equations
\eqref{eq:alternating-formula} and \eqref{eq:partition-reflection}
imply $K^\Psi_{\lambda, \mu}(q) = K^{\Psi^\dagger}_{\lambda^\perp, \mu^\perp}(q)$.
\end{proof}
\begin{rmk}
\Cref{prop:algebraic-duality} holds for any subset $\Psi \subseteq \Delta_n^+$, with $H(\Psi; \mu)$ defined by \eqref{eq:catalan-definition}.
\end{rmk}

\section{The involution \texorpdfstring{$\Phi_{n, k}$}{Phi(n,k)} and charge}\label{sec:tableaux}

Lascoux and Sch\"utzenberger~\cite[Property~5.3(iii)]{LasSch1981} proved charge preservation under column complementation using the plactic monoid.
We give an alternative proof for tableaux in an arbitrary rectangle using the properties of cocharge.

\subsection{Tableaux and insertion}\label{subsec:insertion}

We use English notation for Young diagrams.
A \emph{semistandard Young tableau} (SSYT) is a filling of a Young diagram with weakly increasing rows and strictly increasing columns.
Throughout this paper, tableaux are semistandard.
In this section, their entries lie in $[n]$.
For a partition $\lambda$ and $\alpha \in \mathbb Z_{\geq 0}^n$, we write $\SSYT_n(\lambda, \alpha)$ for the set of tableaux of shape $\lambda$ and content $\alpha$.
For a tableau $T$, $\sh(T)$ and $\wt(T)$ are its shape and content.
Its size $|T|$ counts its boxes, and $\width(T)$ counts its columns.
For a word or tableau $v$, $\wt_i(v)$ is the number of occurrences of $i$.
The Yamanouchi tableau $Y_\lambda$ has $\lambda_i$ copies of $i$ in row $i$.

The subtableau $T^{\leq j}$ consists of the entries at most $j$.
The row reading word $\rw(T)$ reads rows from bottom to top, each from left to right.
Column reading runs through the columns from left to right, each from bottom to top.
For a word or tableau without $1$'s, $\dec$ replaces each $i \geq 2$ by $i - 1$.

We write $T \leftarrow x$ for the tableau obtained by row inserting $x$ into $T$.
For a word $v$, insertion proceeds from left to right, and $P(v) = \emptyset \leftarrow v$ is its insertion tableau.
Two words $u, v$ are \emph{Knuth equivalent} if $P(u) = P(v)$~\cite{Knu1970}.

For a vertical or horizontal cut of $T$ into tableaux $A$ and $B$, we have
\[
	T = P(\rw(A)\rw(B)),
\]
where $A$ lies to the left of $B$ for a vertical cut and below $B$ for a horizontal cut.
Each of $\rw(A)$ and $\rw(B)$ may be replaced by the corresponding column reading word.

\subsection{Column complementation and the LS action}\label{subsec:complement}

For $k \geq 0$ and a partition $\mu \subseteq (k^n)$, we write $\overline\mu = (k - \mu_1, \ldots, k - \mu_n)$, so $\mu^\perp = w_0\overline\mu$.
For any tableau $T$ of width at most $k$, we append empty columns on the right to obtain $k$ columns, denoted by $C_1, \ldots, C_k$.
We define \emph{column complementation} $D_{n, k}$ by taking the $j$th column of $D_{n, k}T$ to be $[n] \setminus C_{k + 1 - j}$, written in increasing order~\cite{LasSch1981,Sta1999}.

\begin{prop}\label{prop:D}
Column complementation is an involution on tableaux of width at most $k$.
For partitions $\lambda, \mu \subseteq (k^n)$, column complementation gives a bijection
\[
	D_{n, k} : \SSYT_n(\lambda, \mu)
	\longrightarrow \SSYT_n(\lambda^\perp, \overline\mu).
\]
\end{prop}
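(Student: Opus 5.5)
The plan is to verify three things in order: $D_{n,k}T$ is a semistandard tableau, it has the stated shape and content, and $D_{n,k}$ is an involution. The involution property is immediate from the definition. Write $C'_j = [n]\setminus C_{k+1-j}$ for the $j$th column of $D_{n,k}T$. Then the $j$th column of $D_{n,k}D_{n,k}T$ is $[n]\setminus C'_{k+1-j} = [n]\setminus([n]\setminus C_j) = C_j$. Since each column is a set written in increasing order, $D_{n,k}D_{n,k}T = T$ as soon as both sides are known to be tableaux. So once we show $D_{n,k}$ maps tableaux of width at most $k$ (entries in $[n]$) to tableaux of width at most $k$, bijectivity between the stated sets follows.

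Shape and content are counting. Column $C'_j$ has length $n - |C_{k+1-j}|$. If $\lambda$ has conjugate $\lambda'$, padded with zeros up to $k$ columns, then $|C_i| = \lambda'_i$. So the column lengths of $D_{n,k}T$ are $n - \lambda'_{k+1-j}$, which weakly decrease in $j$. These are exactly the column lengths of $\lambda^\perp$: row $i$ of $\lambda^\perp$ has length $k - \lambda_{n+1-i}$, and the complement of $\lambda$ inside $(k^n)$, rotated by $180^\circ$, has column lengths $n - \lambda'_{k+1-j}$. For content, a letter $i$ appears in exactly $\mu_i$ of the columns $C_1,\dots,C_k$, because columns are strict. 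Hence $i$ appears in exactly $k - \mu_i$ of the complemented columns, and the content is $\overline\mu$. Neither computation needs $\mu$ to be a partition; that hypothesis only matters for how the target set is named.

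The main step is semistandardness, namely that rows weakly increase in $D_{n,k}T$. I would use the standard criterion for two adjacent columns: increasing sets $A$ (left) and $B$ (right) with $|A|\ge|B|$ form a valid pair iff $a_r \le b_r$ for all $r\le |B|$. Equivalently, for every $m\in[n]$,
\[
|A\cap[m]| \ge |B\cap[m]|.
\]
Apply this to $T$'s columns $C_i, C_{i+1}$. It gives $|C_i\cap[m]| \ge |C_{i+1}\cap[m]|$ for all $m$. Then
\[
|([n]\setminus C_{i+1})\cap[m]| = m - |C_{i+1}\cap[m]| \ge m - |C_i\cap[m]| = |([n]\setminus C_i)\cap[m]|,
\]
so the pair $([n]\setminus C_{i+1},\,[n]\setminus C_i)$, placed left to right, satisfies the criterion. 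That is exactly the pair of adjacent columns $C'_{k-i}, C'_{k+1-i}$ in $D_{n,k}T$. Taking $m=n$ also confirms the length inequality. Empty padded columns are handled uniformly, since their complements are all of $[n]$.

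The only point needing care is proving the counting criterion itself. The direction ``entrywise $a_r\le b_r$ implies the counting inequality'' holds because each $b_r\le m$ forces $a_r\le m$. For the converse, take $m=b_r$: at least $r$ elements of $A$ are $\le b_r$, so $a_r\le b_r$. With this, semistandardness and the involution property together give the bijection $\SSYT_n(\lambda,\mu)\to\SSYT_n(\lambda^\perp,\overline\mu)$.
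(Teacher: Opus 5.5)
Your proposal is correct and takes the same approach as the paper. Like the paper, you get the involution from the definition and the shape and content by counting, and you prove semistandardness with the adjacent-column criterion $|C\cap[r]|\ge|C'\cap[r]|$, which complementation reverses. Beyond the paper, you prove that criterion and the conjugate-shape computation for $\lambda^\perp$, which the paper states without proof.
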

\begin{proof}
By construction, $D_{n, k}^2T = T$, and $D_{n, k}T$ has shape $\lambda^\perp$ and content $\overline\mu$ when $T \in \SSYT_n(\lambda, \mu)$.
We show that $D_{n, k}T$ is semistandard.
Two adjacent columns $C, C'$, from left to right, have weakly increasing rows if and only if $|C \cap [r]| \geq |C' \cap [r]|$ for every $r \in [n]$.
The corresponding columns of $D_{n, k}T$ satisfy
\[
	\bigl|([n] \setminus C') \cap [r]\bigr|
	= r - |C' \cap [r]|
	\geq r - |C \cap [r]|
	= \bigl|([n] \setminus C) \cap [r]\bigr|.
\]
Thus $D_{n, k}T$ has weakly increasing rows and, by construction, strictly increasing columns, so it is semistandard.
\end{proof}

The Lascoux-Sch\"utzenberger (LS) action of $\mathfrak S_n$ on words is defined as follows~\cite[Section~4.3]{LasSch1981}.
For $s_i = (i, i + 1)$ with $1 \leq i < n$, we consider the subword consisting of $i$ and $i + 1$.
We repeatedly cancel adjacent pairs $(i + 1)i$ and replace the unmatched subword $i^a(i + 1)^b$ by $i^b(i + 1)^a$, keeping all other positions unchanged.
For a tableau $T$, we apply this rule to $\rw(T)$ and return the letters to their boxes to obtain the tableau $s_i \cdot T$.
Column reading gives the same action~\cite[Sections~3.1 and~6.1]{BumSch2017}.
Products act with the rightmost simple transposition first.
The operators satisfy the Coxeter relations and commute with insertion, i.e., $P(w \cdot v) = w \cdot P(v)$ for $w \in \mathfrak S_n$~\cite[Proposition~4.5]{LasSch1981}.
The longest element of $\mathfrak S_m$ is denoted by $w_0^{(m)}$, with $w_0 = w_0^{(n)}$.
We define $\Phi_{n, k}T = w_0 \cdot (D_{n, k}T)$.

\begin{lem}\label{lem:D-crystal}
For $1 \leq i < n$, we have $D_{n, k}s_i = s_iD_{n, k}$.
In particular, $\Phi_{n, k}^2 = \id$.
\end{lem}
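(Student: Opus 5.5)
We will prove $D_{n,k}s_i = s_iD_{n,k}$ by a direct local analysis of the letters $i$ and $i+1$, and then deduce $\Phi_{n,k}^2 = \id$ formally. Since the LS action of $s_i$ only involves the subword in $\{i, i+1\}$, we first reduce to this subword. Read $T$ by columns: each column $C_j$ either contains neither of $i, i+1$, exactly one of them, or both. For a column containing both, the column reading word (bottom to top) contributes the factor $(i+1)\,i$, which is a cancelled pair. Columns containing neither letter contribute nothing. The unmatched letters therefore come only from columns containing exactly one of $i, i+1$. Call these the \emph{active} columns.

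Complementation acts on the columns as follows. A column with both letters becomes one with neither, and vice versa. A column with exactly $i$ becomes a column with exactly $i+1$, and vice versa. The column order is reversed. So the active columns of $D_{n,k}T$ are those of $T$, with order reversed and with $i \leftrightarrow i+1$ swapped. After discarding the cancelled pairs, the $\{i,i+1\}$-subword of the column reading of $T$ is the sequence of active letters $u = u_1\cdots u_m$ in column order. For $D_{n,k}T$ it is $\tilde u = \tau(u_m)\cdots\tau(u_1)$, where $\tau$ swaps $i$ and $i+1$.

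The key check is that bracketing is compatible with this ``reverse and swap'' map. A pair $(i+1)\,i$ in $u$ at positions $p<q$ becomes $\tau(i)=i+1$ before $\tau(i+1)=i$ in $\tilde u$, so it is again a matched pair $(i+1)\,i$. The matching in $\tilde u$ is thus the mirror of the matching in $u$. If the unmatched part of $u$ is $i^a(i+1)^b$, the unmatched part of $\tilde u$ is $i^b(i+1)^a$, in mirrored positions.

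We now compare the two sides. On $T$, $s_i$ changes the unmatched $i^a(i+1)^b$ to $i^b(i+1)^a$. For an active column this just relabels its single letter among $\{i, i+1\}$, and the column remains strictly increasing. Complementing then gives unmatched part $i^a(i+1)^b$ in mirrored positions. Applying $s_i$ to $D_{n,k}T$ also yields $i^a(i+1)^b$ on the same mirrored active columns. All other columns agree trivially, so $D_{n,k}s_iT = s_iD_{n,k}T$. The main obstacle is justifying that $s_i$ may be computed on the column reading word (stated in the paper via \cite{BumSch2017}) and that cancelling the within-column pairs first does not change the final bracketing. The latter holds because the outcome of the bracketing does not depend on the order in which adjacent pairs are cancelled.

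Finally, $D_{n,k}$ commutes with every $s_i$, hence with $w_0$. Therefore $\Phi_{n,k}^2 = w_0D_{n,k}w_0D_{n,k} = w_0^2D_{n,k}^2 = \id$, using the Coxeter relations and \cref{prop:D}.
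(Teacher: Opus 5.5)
Your proof is correct and follows the same route as the paper. You pass to the column reading word, cancel the $(i+1)\,i$ pairs inside columns containing both letters, and observe that complementation reverses and swaps the remaining active letters, so the bracketing is mirrored and the unmatched subword $i^a(i+1)^b$ becomes $i^b(i+1)^a$; your deduction of $\Phi_{n,k}^2=\id$ from $D_{n,k}$ commuting with $w_0$ and both being involutions is also the paper's argument, with the mirrored-matching step spelled out in more detail.
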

\begin{proof}
In the column reading word, $i + 1$ and $i$ cancel within each column containing both.
It suffices to consider columns containing exactly one of $i, i + 1$.
Column complementation interchanges $i$ and $i + 1$ in these columns and reverses the column order.
Therefore, complementation preserves the cancelled pairs and sends the unmatched subword $i^a(i + 1)^b$ to $i^b(i + 1)^a$.
Thus $D_{n, k}s_iT = s_iD_{n, k}T$.
Since $D_{n, k}$ and $w_0$ are commuting involutions, $\Phi_{n, k}^2 = \id$.
\end{proof}

For partitions $\lambda, \mu \subseteq (k^n)$, we obtain a bijection
\[
	\Phi_{n, k} : \SSYT_n(\lambda, \mu) \longrightarrow
	\SSYT_n(\lambda^\perp, \mu^\perp).
\]

\begin{exmp}\label{ex:running-tableau}
For $n = 4$ and $k = 4$, the involution acts as follows:
\[
	T = \begin{ytableau}1&1&1&3\\2&2&4\\3&4\end{ytableau}
	\xrightarrow{D_{4, 4}}
	\begin{ytableau}1&2&3&4\\2&3\\4\end{ytableau}
	\xrightarrow{w_0^{(4)}}
	U = \begin{ytableau}1&1&3&3\\2&2\\4\end{ytableau}.
\]
Here $\sh(T) = (4, 3, 2, 0)$ and $\wt(T) = (3, 2, 2, 2)$, while
$\sh(U) = (4, 2, 1, 0)$ and $\wt(U) = (2, 2, 2, 1)$.
\end{exmp}

\subsection{Cyclage, cocharge, and charge}\label{subsec:cocharge}

For a partition $\mu$, a \emph{cyclage} of a tableau $T$ of content $\mu$ is a move
\[
	T = P(xv) \longrightarrow P(vx), \qquad x > 1,
\]
where $xv$ is any word inserting to $T$.
These moves define the \emph{cyclage graph} on tableaux of content $\mu$.
Every tableau $T$ admits a directed path to the one-row tableau, and the length of such a path depends only on $T$~\cite[Property~5.2]{LasSch1981}.
We define $\cocharge(T)$ to be this length.
For a word $w$ of content $\mu$, cocharge is given by $\cocharge(w) = \cocharge(P(w))$.
The empty word has $\cocharge(\emptyset) = 0$.
By definition,
\begin{equation}\label{eq:cocharge-rotation}
	\cocharge(xv) = \cocharge(vx) + 1
	\qquad\text{if $x > 1$ and $xv$ has content $\mu$}.
\end{equation}

\begin{exmp}
For content $(1, 1, 1)$, the tableaux in
\[
	\begin{ytableau}1\\2\\3\end{ytableau}
	\longrightarrow
	\begin{ytableau}1&3\\2\end{ytableau}
	\longrightarrow
	\begin{ytableau}1&2\\3\end{ytableau}
	\longrightarrow
	\begin{ytableau}1&2&3\end{ytableau}
\]
have cocharges $3, 2, 1, 0$, respectively.
\end{exmp}

For a partition $\mu$, we write $n(\mu) = \sum_{i \geq 1}(i - 1)\mu_i$.
For a word $v$ of content $\mu$, we define
\begin{equation*}
	\charge(v) = n(\mu) - \cocharge(v).
\end{equation*}
For a word $v$ of arbitrary content, we choose $\sigma \in \mathfrak S_n$ so that $\sigma \cdot v$ has partition content and define~\cite[Section~5]{LasSch1981}
\[
	\cocharge(v) = \cocharge(\sigma \cdot v), \qquad
	\charge(v) = \charge(\sigma \cdot v).
\]
The values $\cocharge(\sigma \cdot v)$ and $\charge(\sigma \cdot v)$ do not depend on the choice of $\sigma$.
For a tableau $T$, we set $\charge(T) = \charge(\rw(T))$ and $\cocharge(T) = \cocharge(\rw(T))$.

For partitions $\lambda, \mu$ with at most $n$ parts and $|\lambda| = |\mu|$, the charge formula is~\cite{LasSch1978}
\begin{equation}\label{eq:charge-formula}
	K_{\lambda, \mu}(q)
	= \sum_{T \in \SSYT_n(\lambda, \mu)}q^{\charge(T)}.
\end{equation}

Charge and cocharge are Knuth invariant and invariant under the LS action.
For a word with no $1$'s, $\dec$ agrees with the LS action of $s_{n - 1}\cdots s_1$ and therefore preserves cocharge.

\subsection{Charge preservation under \texorpdfstring{$\Phi_{n, k}$}{Phi(n,k)}}\label{subsec:charge-preservation}

For a tableau $T$, we write $a = \wt_1(T)$.
The tableau $T_L$ consists of the first $a$ columns with their first row $1^a$ removed.
The tableau $T_R$ consists of the remaining columns, shifted to the left.
We define the \emph{column catabolism} of $T$ by
\[
	\cat(T) = \dec\bigl(P(\rw(T_R)\rw(T_L))\bigr).
\]
This is the single-row case of column catabolism~\cite{KirShi2002}, followed by relabelling.
Note that column catabolism differs from the row katabolism defined in \cref{sec:katabolizability}.

\begin{rmk}\label{rem:cocharge-prefix}
The cyclage properties in~\cite[Properties~5.2 and~5.5]{LasSch1981} give
\begin{equation}\label{eq:cocharge-prefix}
	\cocharge(1^av) = \cocharge(v)
	\qquad(a \geq 0, \ \text{all letters of $v$ at least $2$}).
\end{equation}
\end{rmk}

\begin{lem}\label{lem:cocharge-catabolism}
For every tableau $T$,
\[
	\cocharge(T) = |T_L| + \cocharge(\cat(T)).
\]
\end{lem}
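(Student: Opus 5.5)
We plan to realize $\cocharge(T)=|T_L|+\cocharge(\cat(T))$ through a chain of cyclages, using the cyclage property \eqref{eq:cocharge-rotation} once for each letter of $T_L$, and then removing the $1$'s with \eqref{eq:cocharge-prefix}. Write $a=\wt_1(T)$. The first $a$ columns of $T$ are exactly the columns containing a $1$, and each has its $1$ at the top.

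The first step is to choose a suitable word inserting to $T$. We cut $T$ vertically into the first $a$ columns and the remaining columns, which gives
\[
	T = P\bigl(\rw(T_L)\,1^a\,\rw(T_R)\bigr).
\]
To justify this, note that the left block consists of $T_L$ sitting below the row $1^a$. A horizontal cut of this block gives the word $\rw(T_L)1^a$. Concatenating with $\rw(T_R)$ is the vertical-cut rule, since $T_R$ is the right part shifted left.

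The second step is to cycle letters through the word. Every letter of $\rw(T_L)$ is at least $2$, because these letters lie strictly below the first row in columns whose top entry is $1$. We move them one at a time from the front of the word to the back, and each move is a cyclage. This uses the fact that the cyclage $P(xv)\to P(vx)$ may be applied to any word $xv$ inserting to the current tableau, and the intermediate words all have content $\mu$. After $|T_L|$ applications of \eqref{eq:cocharge-rotation} we obtain
\[
	\cocharge(T) = |T_L| + \cocharge\bigl(1^a\,\rw(T_R)\,\rw(T_L)\bigr).
\]
We work with partition content $\mu$ here. For general content, the definition via the LS action reduces to this case, provided the permutation $\sigma$ fixes $1$ or we first move to partition content. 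We should check that this reduction is compatible, but the LS action commutes with insertion and preserves cocharge.

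The third step is to strip the $1$'s. The word $v=\rw(T_R)\rw(T_L)$ has all letters at least $2$, so \eqref{eq:cocharge-prefix} gives $\cocharge(1^a v)=\cocharge(v)=\cocharge(P(v))$. The operation $\dec$ preserves cocharge, so this equals $\cocharge(\cat(T))$, which completes the identity.

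The main obstacle is content bookkeeping. Equation \eqref{eq:cocharge-rotation} is stated for words of partition content $\mu$, while the intermediate words and $v$ itself, which has no $1$'s, need not have partition content. We therefore have to make sure that applying the cyclage relation to $T$ is legitimate, and that $\cocharge(v)$ is computed consistently with the definition via $\dec$ and the LS action. If $\wt(T)$ is not a partition, we first apply a $\sigma$ that sorts the content while fixing the letter $1$ in the relevant sense. This can be done because cyclage commutes suitably with LS operators $s_i$ for $i\ge2$, which act only on letters $\ge2$ and commute with insertion. The shape of $T_L$ and the number of $1$'s are preserved under these operators, since they do not touch the letter $1$.
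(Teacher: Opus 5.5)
For partition content your argument is correct and is the paper's argument: the word $W=\rw(T_L)\,1^a\,\rw(T_R)$, then $|T_L|$ applications of \eqref{eq:cocharge-rotation}, then \eqref{eq:cocharge-prefix} and $\dec$. The gap is the general-content case. The lemma is stated for every tableau and genuinely needs that case: in \cref{prop:ambient-charge} it is applied to $U=D_{n,k}T$, whose content $\overline\mu$ makes $1$ the \emph{least} frequent letter. Your proposed fix fails there.

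Operators $s_i$ with $i\ge 2$ only permute $\wt_2(T),\ldots,\wt_n(T)$, so they reach a partition only when $\wt_1(T)$ is already a largest entry of the content. If you instead move to partition content with an arbitrary $\sigma$, the number of $1$'s changes, and with it $T_L$ and $\cat(T)$. The identity you would then prove for $\sigma\cdot T$ is a different identity. For instance, let $T$ have rows $1\,2$ and $2\,3$. Then $|T_L|=1$ and $\cocharge(\cat(T))=1$. By contrast, $s_1\cdot T$ has rows $1\,1$ and $2\,3$, with $|(s_1\cdot T)_L|=2$ and $\cocharge(\cat(s_1\cdot T))=0$.

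The missing idea is to keep the word $W$ of the original $T$ and apply an arbitrary $\sigma$, possibly moving the letter $1$, to the whole word. Two facts make this work.
First, cyclic rotation $\rot(xv)=vx$ commutes with the LS action \cite[Theorem~4.6(2)]{LasSch1981}. Hence $\rot^d(\sigma\cdot W)=\sigma\cdot(1^a\rw(T_R)\rw(T_L))$ with $d=|T_L|$.
Second, $\sigma\cdot W$ is the reading of the tableau $\sigma\cdot T$ in the same box order as $W$ \cite[Sections~3.1 and~6.1]{BumSch2017}. So its first $d$ letters occupy boxes below the first row of $\sigma\cdot T$, and they are at least $2$ even though $\sigma$ may move the letter $1$.

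With these facts, \eqref{eq:cocharge-rotation} applies $d$ times to the partition-content word $\sigma\cdot W$. LS invariance of cocharge then returns you to $\cocharge(1^a\rw(T_R)\rw(T_L))$, and your third step finishes the proof.
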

\begin{proof}
Let $w_L = \rw(T_L)$, $w_R = \rw(T_R)$, $W = w_L1^aw_R$, and $d = |T_L|$.
Cutting $T$ after column $a$ gives $P(W) = T$.
Cyclic rotation $\rot(xv) = vx$ commutes with the LS action~\cite[Theorem~4.6(2)]{LasSch1981}.
In other words, for any $\sigma \in \mathfrak S_n$,
\[
	\rot^d(\sigma \cdot W)
	= \sigma \cdot \rot^d(W)
	= \sigma \cdot (1^aw_Rw_L).
\]

We choose $\sigma$ so that $\sigma \cdot T$ has partition content.
Reading $\sigma \cdot T$ in the order used to form $W$ gives $\sigma \cdot W$~\cite[Sections~3.1 and~6.1]{BumSch2017}.
The first $d$ letters come from boxes below the first row, so they are greater than $1$.
Therefore, by \eqref{eq:cocharge-rotation} and \eqref{eq:cocharge-prefix},
\[
\begin{aligned}
	\cocharge(T) &= \cocharge(\sigma \cdot W)\\
	&= d + \cocharge\bigl(\rot^d(\sigma \cdot W)\bigr)\\
	&= d + \cocharge(1^aw_Rw_L)\\
	&= d + \cocharge(\cat(T)).
\end{aligned}
\]
Hence $\cocharge(T) = |T_L| + \cocharge(\cat(T))$.
\end{proof}

For tableaux $X, Y$, tableau multiplication is defined by $X \leftarrow Y = P(\rw(X)\rw(Y))$.
Then $\width(X \leftarrow Y) \leq \width(X) + \width(Y)$.
For $\width(X) \leq k_1$ and $\width(Y) \leq k_2$, column complementation reverses tableau multiplication~\cite[Property~3.4]{LasSch1981}:
\begin{equation}\label{eq:complement-product}
	D_{n, k_1 + k_2}(X \leftarrow Y)
	= D_{n, k_2}Y \leftarrow D_{n, k_1}X.
\end{equation}

\begin{lem}\label{lem:complement-catabolism}
If $n \geq 2$ and $\width(T) \leq k$, then
\[
	\cat(D_{n, k}T)
	= D_{n - 1, k}\bigl(\cat(T)\bigr).
\]
\end{lem}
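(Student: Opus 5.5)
The plan is to compute both sides column by column and reduce the identity to the reversal property \eqref{eq:complement-product} of column complementation. That property is applied on the alphabet $\{2, \ldots, n\}$, which becomes $[n-1]$ after $\dec$.

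\emph{Step 1: the first-row $1$'s of $D_{n,k}T$.} Let $a = \wt_1(T)$ and let $C_1, \ldots, C_k$ be the columns of $T$, padded with empty columns. Since $T$ is semistandard, the $1$'s occupy exactly the tops of $C_1, \ldots, C_a$, and $C_{a+1}, \ldots, C_k$ contain no $1$. The $j$th column of $D_{n,k}T$ is $[n]\setminus C_{k+1-j}$.
\begin{itemize}
\item For $j \le k-a$, the column $C_{k+1-j}$ lacks $1$, so this column of $D_{n,k}T$ contains $1$.
\item For $j > k-a$, the column $C_{k+1-j}$ contains $1$, so this column of $D_{n,k}T$ does not.
\end{itemize}
Hence $\wt_1(D_{n,k}T) = k-a$.

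\emph{Step 2: identify the two pieces of $D_{n,k}T$.} Write $D'_m$ for column complementation in width $m$ with respect to the alphabet $\{2, \ldots, n\}$. That is, $D'_m$ replaces each column by its missing letters from $\{2,\ldots,n\}$ and reverses the column order.
\begin{itemize}
\item $(D_{n,k}T)_L$ consists of the first $k-a$ columns of $D_{n,k}T$ with their $1$'s removed. Its columns are $\{2,\ldots,n\}\setminus C_{k+1-j}$ for $j \le k-a$. The columns of $T_R$ are $C_{a+1}, \ldots, C_k$, padded to width $k-a$. So $(D_{n,k}T)_L = D'_{k-a}T_R$.
\item $(D_{n,k}T)_R$ consists of the last $a$ columns. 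Its columns are $[n]\setminus C_i = \{2,\ldots,n\}\setminus (C_i\setminus\{1\})$ for $i \le a$, in reversed order. So $(D_{n,k}T)_R = D'_a T_L$.
\end{itemize}
Here one checks that $\width(T_L) \le a$ and $\width(T_R) \le k-a$. One also checks that padding by empty columns is compatible on both sides: an empty column of $T_R$ corresponds to a full column $\{1,\ldots,n\}$ of $D_{n,k}T$, which becomes $\{2,\ldots,n\}$ after the $1$ is removed.

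\emph{Step 3: apply the product formula.} By definition and Step 2,
\[
\cat(D_{n,k}T) = \dec\bigl(P(\rw(D'_aT_L)\,\rw(D'_{k-a}T_R))\bigr) = \dec\bigl(D'_aT_L \leftarrow D'_{k-a}T_R\bigr).
\]
Applying \eqref{eq:complement-product} with $X = T_R$, $Y = T_L$, $k_1 = k-a$ and $k_2 = a$ gives
\[
D'_aT_L \leftarrow D'_{k-a}T_R = D'_k(T_R \leftarrow T_L).
\]
This uses the formula on the alphabet $\{2,\ldots,n\}$, which is legitimate because $\dec$ is an order-preserving relabelling to $[n-1]$. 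Indeed, $\dec$ commutes with insertion, and it conjugates $D'_m$ to $D_{n-1,m}$. Therefore
\[
\cat(D_{n,k}T) = \dec D'_k(T_R\leftarrow T_L) = D_{n-1,k}\,\dec\bigl(P(\rw(T_R)\rw(T_L))\bigr) = D_{n-1,k}(\cat(T)).
\]

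The argument is short. The main place needing care is Step 2: the bookkeeping of padded empty columns, and checking that removing the first-row $1$'s from $D_{n,k}T$ matches complementation in $\{2,\ldots,n\}$ exactly. This includes the edge cases $a = 0$ and $a = k$. I would also double-check that the transported form of \eqref{eq:complement-product} holds with the stated widths, since $\width(T_R \leftarrow T_L) \le k$.
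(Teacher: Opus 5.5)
Your proof is correct and follows essentially the same route as the paper. Both arguments note that $\wt_1(D_{n,k}T)=k-a$, identify the two pieces of $D_{n,k}T$ as the complements of $T_R$ and $T_L$ (in widths $k-a$ and $a$), and then apply the reversal identity \eqref{eq:complement-product}. The only difference is cosmetic: the paper applies $\dec$ first, setting $X=\dec(T_L)$ and $Y=\dec(T_R)$ and writing $\dec(U_L)=D_{n-1,k-a}Y$ and $\dec(U_R)=D_{n-1,a}X$, so it never needs your transported complementation $D'_m$ on $\{2,\ldots,n\}$.
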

\begin{proof}
Let $U = D_{n, k}T$, $X = \dec(T_L)$, and $Y = \dec(T_R)$.
The tableau $U$ has $k - a$ entries equal to $1$.
Complementation exchanges columns containing a $1$ with columns without a $1$ and reverses their order, so
\[
	\dec(U_L) = D_{n - 1, k - a}Y, \qquad
	\dec(U_R) = D_{n - 1, a}X.
\]
Since $\cat(T) = Y \leftarrow X$, \eqref{eq:complement-product} yields
\[
	\cat(U)
	= D_{n - 1, a}X \leftarrow D_{n - 1, k - a}Y
	= D_{n - 1, k}(Y \leftarrow X).
	\qedhere
\]
\end{proof}

\begin{prop}\label{prop:ambient-charge}
For every tableau $T$ with entries in $[n]$ and width at most $k$,
\[
	\charge(D_{n, k}T) = \charge(T), \qquad
	\charge(\Phi_{n, k}T) = \charge(T).
\]
\end{prop}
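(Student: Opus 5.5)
We argue by induction on $n$, using \cref{lem:cocharge-catabolism} and \cref{lem:complement-catabolism} to peel off the letter $1$ on both sides. Since charge and cocharge are invariant under the LS action and $\Phi_{n, k} = w_0 \cdot D_{n, k}$, the second equality follows from the first. It therefore suffices to prove $\charge(D_{n, k}T) = \charge(T)$. We may also replace $T$ by $\sigma \cdot T$ so that $T$ has partition content $\mu \subseteq (k^n)$, because \cref{lem:D-crystal} shows that $D_{n, k}$ commutes with the LS action. Then $D_{n, k}T$ has content $\overline\mu$. Since $\charge = n(\cdot) - \cocharge$ and cocharge of arbitrary content is computed after sorting the content, we must show
\[
	\cocharge(D_{n, k}T) - \cocharge(T) = n(\mu^\perp) - n(\mu),
\]
where $\mu^\perp = w_0\overline\mu$ is the sorted content of $D_{n, k}T$.

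For the induction on $n$, the base case $n = 1$ is trivial: $T = 1^{\mu_1}$, $D_{1, k}T = 1^{k - \mu_1}$, and both have charge $0$. For $n \geq 2$, set $a = \mu_1 = \wt_1(T)$ and $U = D_{n, k}T$. By \cref{lem:cocharge-catabolism} applied to $T$, we get $\cocharge(T) = |T_L| + \cocharge(\cat(T))$. Here $|T_L| = |\sh(T)| - |\text{first row}| + (\text{first row} - a)$ is only determined through the shape of the first $a$ columns, so we should track it carefully. Applying the same lemma to $U$ requires $U$ to have partition content. Since $\overline\mu$ is increasing, we first apply $w_0 \cdot$, which does not move the letter $1$ into the role we want. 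This is the first subtlety, and we should instead use the cocharge definition for arbitrary content directly in the proof of \cref{lem:cocharge-catabolism}, which already allows a $\sigma$. Inspecting that proof, the identity $\cocharge(T) = |T_L| + \cocharge(\cat(T))$ only needs $\sigma$ to fix the property that the first $d$ letters of $\sigma\cdot W$ exceed $1$ and that $1^a$ is a prefix with the correct cocharge convention. So the cleaner route is to work with charge directly: state the analogue of \cref{lem:cocharge-catabolism} as
\[
	\charge(T) = \charge(\cat(T)) + (\text{correction depending only on } \sh(T), \mu).
\]

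Combining this with $\cat(U) = D_{n - 1, k}(\cat(T))$ from \cref{lem:complement-catabolism} and the induction hypothesis $\charge(D_{n - 1, k}\cat(T)) = \charge(\cat(T))$ reduces everything to a numerical identity. The width hypothesis holds because $\cat(T)$ has width at most $k$ by the width bound on tableau multiplication. The identity compares the correction terms for $T$ (shape $\lambda$, content $\mu$) and $U$ (shape $\lambda^\perp$, content $\overline\mu$ with $\wt_1(U) = k - a$). Concretely, $|U_L| = |\lambda^\perp$ restricted to its first $k - a$ columns$| - (k - a)$, and $|\lambda^\perp| = nk - |\lambda|$. We also need $n(\mu) - n(\dec\text{-content})$ versus the analogous quantity for $\mu^\perp$.

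The main obstacle is that charge of non-partition content is defined via sorting, so $U$ and $\cat(U)$ have increasing contents, and the letter $1$ in $U$ is the least frequent letter rather than the most frequent. I would handle this by proving directly, through the rotation argument in \cref{lem:cocharge-catabolism}, that $\charge(1^a v) = \charge(v)$ relative to appropriate normalization, and $\charge(xv) = \charge(vx) - 1$ for $x > 1$, whenever charge is taken with the LS-sorted convention. Equivalently, I would apply $w_0$ only to the letters $2, \ldots, n$ and verify that this conjugation commutes with rotation. Once this normalization is fixed, the remaining computation is the elementary check that the $|T_L|$ terms and the $n(\cdot)$ terms match under $\lambda \mapsto \lambda^\perp$ and $\mu \mapsto \mu^\perp$. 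This is a column-by-column count: each column of $T$ of length $c$ containing $1$ contributes $c - 1$, and its complement, which does not contain $1$, contributes to the other side in a compensating way.
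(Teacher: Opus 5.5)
Your outline is the paper's: reduce to partition content via \cref{lem:D-crystal}, induct on $n$ through \cref{lem:cocharge-catabolism,lem:complement-catabolism}, and finish with a count. However, the inductive step is not closed. The step you single out as the main obstacle is not one, and the tool you propose for it is false.

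\Cref{lem:cocharge-catabolism} is stated for every tableau. In its proof, $\sigma$ is applied to the tableau, and $\sigma\cdot W$ is the reading of $\sigma\cdot T$ in the vertical-cut order. The first $d$ letters of $\sigma\cdot W$ occupy boxes below the first row of the tableau $\sigma\cdot T$, so they exceed $1$ and \eqref{eq:cocharge-rotation} applies. Hence the lemma applies verbatim to $U$, despite its increasing content $\overline\mu$.

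The word-level rule you want, $\charge(xv) = \charge(vx) - 1$ for $x > 1$ with LS-sorted charge, fails. For $xv = 221$ one gets $s_1\cdot 221 = 121$ and $s_1\cdot 212 = 211$, and both have cocharge $1$. Applying the longest element only to the letters $2, \ldots, n$ does not rescue it either: $k - \mu_1$ is the smallest entry of $\overline\mu$, so the content stays non-partition.

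The count, which is the real content of the inductive step, is also left undone. Your first expression for $|T_L|$ simplifies to $|T| - a$, which wrongly includes $|T_R|$. With padded column heights $h_1, \ldots, h_k$ and $a = \mu_1$, one has $|T_L| = \sum_{j \leq a}(h_j - 1)$ and $|U_L| = \sum_{j > a}(n - h_j - 1) = (n - 1)(k - a) - |T_R|$. Since
\[
	n(\mu) - n(\mu_2, \ldots, \mu_n) = |T| - a,
	\qquad
	n(\mu^\perp) - n(k - \mu_n, \ldots, k - \mu_2) = (n - 1)(k - a),
\]
\cref{lem:cocharge-catabolism} gives
\[
	\charge(T) = \charge(\cat T) + |T_R|, \qquad \charge(U) = \charge(\cat U) + |T_R|.
\]
Then \cref{lem:complement-catabolism} and the induction hypothesis yield $\charge(U) = \charge(T)$.

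This is exactly your ``charge plus correction'' route, with correction $|T_R|$ on both sides. The paper runs the same induction on the equivalent explicit formula $\cocharge(D_{n, k}T) - \cocharge(T) = k\binom n2 - (n - 1)|T|$, and then matches it with $n(\mu^\perp) - n(\mu)$.
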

\begin{proof}
By \cref{lem:D-crystal} and the LS invariance of charge, we may assume that $T$ has partition content $\mu$.
We prove by induction on the alphabet size $n$ that
\begin{equation*}
	\cocharge(D_{n, k}T) - \cocharge(T)
	= k\binom n2 - (n - 1)|T|.
\end{equation*}
For $n = 1$, both tableaux have at most one row, so both cocharges are zero.
For $n \geq 2$, let $U = D_{n, k}T$.
The padded columns of $T$ have heights $h_1, \ldots, h_k$, and its first $a = \mu_1$ columns contain $1$.
Hence
\[
	|T_L| = \sum_{j = 1}^a(h_j - 1), \qquad
	|U_L| = \sum_{j = a + 1}^k(n - h_j - 1).
\]
Since $\sum_j h_j = |T|$, subtraction gives
$|U_L| - |T_L| = (n - 1)k - |T| - (n - 2)a$.
The tableau $\cat(T)$ has partition content $(\mu_2, \ldots, \mu_n)$ and $|T| - a$ boxes, with
\[
	\width(\cat(T)) \leq \width(T_L) + \width(T_R) \leq k.
\]
The induction hypothesis and \cref{lem:cocharge-catabolism,lem:complement-catabolism} give
\[
\begin{aligned}
	\cocharge(U) - \cocharge(T)
	&= (n - 1)k - |T| - (n - 2)a\\
	&\qquad + k\binom{n - 1}{2} - (n - 2)(|T| - a)\\
	&= k\binom n2 - (n - 1)|T|.
\end{aligned}
\]

By \cref{prop:D}, $D_{n, k}T$ has content $\overline\mu$, whose decreasing rearrangement is $\mu^\perp$.
Hence
\[
	n(\mu^\perp) - n(\mu)
	= k\binom n2 - (n - 1)|T|.
\]
Thus $\charge(D_{n, k}T) = \charge(T)$.
Hence the LS action of $w_0$ gives $\charge(\Phi_{n, k}T) = \charge(T)$.
\end{proof}

\section{Katabolizable tableaux and DARK crystals}\label{sec:katabolizability}

\subsection{Katabolizable tableaux}\label{subsec:catalan}

Let $\Psi \subseteq \Delta_n^+$ be an upper root ideal.
Taking boundaries gives a bijection between upper root ideals in $\Delta_n^+$ and Dyck paths.
The associated \emph{Hessenberg function} $h = (h_1, \ldots, h_n)$ is determined by
\[
	\Psi = \{(i, j) \in \Delta_n^+ : j > h_i\}.
\]
It satisfies $i \leq h_i \leq n$ and $h_1 \leq \cdots \leq h_n$.
Following~\cite[Section~2.1]{BlaMorPun2024}, we set $\dvec = \dvec(\Psi) = (d_1, \ldots, d_{n - 1})$, where
\[
	d_i = \bigl|\{j \in \{i, \ldots, n\} : (i, j) \notin \Psi\}\bigr|
		= h_i - i + 1\qquad(1 \leq i < n).
\]
The conditions on $h$ imply $1 \leq d_i \leq n - i + 1$ for $1 \leq i < n$ and $d_{i + 1} \geq d_i - 1$ for $1 \leq i < n - 1$.

\par\medskip
\noindent\begin{minipage}{\textwidth}
\begin{exmp}\label{ex:root-ideal-dyck}
For $n = 4$, let $\Psi = \Delta_4^+ \setminus \{(1, 2), (2, 3)\}$.
The boundary of $\Psi$ is the Dyck path shown in \cref{fig:root-dyck}.
The diagram gives $h = (2, 3, 3, 4)$ and $\dvec = (2, 2, 1)$.
The unshaded boxes $(1, 1), (1, 2)$ give $d_1 = 2$.
\begin{center}
\begin{tikzpicture}[x=0.63cm,y=-0.63cm,font=\small]
	\foreach \i/\j in {1/3,1/4,2/4,3/4} {
		\fill[black!18] (\j-1,\i-1) rectangle (\j,\i);
	}
	\foreach \i in {1,...,4} {
		\foreach \j in {\i,...,4} {
			\draw[black!50,line width=0.3pt] (\j-1,\i-1) rectangle (\j,\i);
		}
	}
	\draw[densely dashed,black!55,line width=0.35pt] (0,0)--(4,4);
	\draw[line width=0.8pt] (0,0)--(2,0)--(2,1)--(3,1)--(3,3)--(4,3)--(4,4);
	\foreach \j in {1,...,4} {\node at (\j-0.5,-0.35) {$\j$};}
	\foreach \i in {1,...,4} {\node at (-0.35,\i-0.5) {$\i$};}
\end{tikzpicture}
\end{center}
\captionof{figure}{The shaded boxes form $\Psi$.}
\label{fig:root-dyck}
\end{exmp}
\end{minipage}
\par\medskip

A \emph{tabloid} is a list of weakly increasing rows, possibly empty, with no column condition.
A tabloid $A$ with $\ell$ fixed row positions is written $A = \langle A_1; \ldots; A_\ell\rangle$.
Its row reading word and insertion tableau are $\rw(A) = A_\ell\cdots A_1$ and $P(A) = P(\rw(A))$, respectively.
For $1 \leq r < \ell$, we define the \emph{partial insertion} $P_r$ by replacing rows $r$ and $r + 1$ of $A$ with the first and second rows of $P(A_{r + 1}A_r)$, respectively~\cite[Definition~6.8]{BlaMorPun2024}.
All other rows remain unchanged, and missing rows are replaced by empty rows.

The row $u_r$ is obtained from $A_r$ by deleting its $1$'s and decreasing the remaining entries by one.
For $1 \leq d \leq \ell$, the katabolism operators are defined by~\cite[Definitions~6.8 and~6.14]{BlaMorPun2024}
\[
	\kat(A) = \langle u_2; \ldots; u_\ell; u_1\rangle, \qquad
	F_{d, \ell} = P_d \circ \cdots \circ P_{\ell - 1} \circ \kat.
\]
For $d = \ell$, there is no insertion.
\begin{defn}[{\cite[Definition~2.15 and Proposition~6.15]{BlaMorPun2024}}]\label{def:katabolizable}
For an upper root ideal $\Psi \subseteq \Delta_n^+$ and a tableau $T$ with entries in $[n]$, we define its katabolism sequence by
\begin{equation}\label{eq:katabolism-sequence}
	A^{(0)} = T, \qquad A^{(i)} = F_{d_i, n}(A^{(i - 1)})\quad(1 \leq i < n),
\end{equation}
where $T$ is viewed as a tabloid with $n$ row positions.
We say that $T$ is \emph{$\Psi$-katabolizable} if, for every $0 \leq i < n$, all $1$'s in $A^{(i)}$ lie in its first row.
For a partition $\lambda$ and $\alpha \in \mathbb Z_{\geq 0}^n$, the set of $\Psi$-katabolizable tableaux of shape $\lambda$ and content $\alpha$ is denoted by $\tset_\Psi(\lambda, \alpha)$.
\end{defn}

\begin{exmp}\label{ex:katabolism}
For $T$ from \cref{ex:running-tableau} and $\Psi$ from \cref{ex:root-ideal-dyck}, we have $\dvec = (2, 2, 1)$.
As a four-row tabloid, $A^{(0)} = T = \langle1113; 224; 34; \emptyset\rangle$.
We delete the $1$'s and decrease the other entries, obtaining the rows $2, 113, 23, \emptyset$.
We move the first row to the bottom to obtain $\kat(T) = \langle113; 23; \emptyset; 2\rangle$.
Since $d_1 = 2$, we apply $P_3$ and then $P_2$:
\[
	\langle113; 23; \emptyset; 2\rangle
	\xrightarrow{P_3} \langle113; 23; 2; \emptyset\rangle
	\xrightarrow{P_2} \langle113; 223; \emptyset; \emptyset\rangle = A^{(1)}.
\]
The first two rows of $A^{(1)}$ both have $3$ in their third column, so $A^{(1)}$ is not semistandard.
The remaining steps use $d_2 = 2$ and $d_3 = 1$:
\[
	A^{(1)} \xrightarrow{F_{2, 4}}
	\langle112; 2; \emptyset; \emptyset\rangle = A^{(2)}
	\xrightarrow{F_{1, 4}} \langle11; \emptyset; \emptyset; \emptyset\rangle = A^{(3)}.
\]
All $1$'s in $A^{(i)}$ lie in its first row for $0 \leq i \leq 3$, so $T$ is $\Psi$-katabolizable.

For $U = \Phi_{4, 4}T$ from \cref{ex:running-tableau}, we have $\Psi^\dagger = \Delta_4^+ \setminus \{(2, 3), (3, 4)\}$ and $\dvec(\Psi^\dagger) = (1, 2, 2)$.
Its katabolism sequence is
\[
	\langle1133; 22; 4; \emptyset\rangle
	\xrightarrow{F_{1, 4}} \langle113; 22; \emptyset; \emptyset\rangle
	\xrightarrow{F_{2, 4}} \langle11; 2; \emptyset; \emptyset\rangle
	\xrightarrow{F_{2, 4}} \langle1; \emptyset; \emptyset; \emptyset\rangle.
\]
All $1$'s remain in the first row, so $U$ is $\Psi^\dagger$-katabolizable.
\end{exmp}

\begin{thm}[{\cite[Theorem~2.18]{BlaMorPun2024}}]\label{thm:BMP}
For an upper root ideal $\Psi \subseteq \Delta_n^+$ and partitions $\lambda, \mu$ with at most $n$ parts,
\[
	K^\Psi_{\lambda, \mu}(q) = \sum_{T \in \tset_\Psi(\lambda, \mu)}q^{\charge(T)}.
\]
\end{thm}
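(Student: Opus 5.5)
The statement is the Blasiak--Morse--Pun formula, which the paper cites rather than reproves. If I had to prove it, I would factor the argument through the DARK crystals mentioned in the organization of \cref{sec:katabolizability}. I would first show that $H(\Psi;\mu)$ is the $q$-graded character of a crystal-theoretic object, and then identify the classical highest-weight elements of that object, graded by energy, with $\tset_\Psi(\lambda,\mu)$ graded by charge.

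The first step is algebraic. Start from the alternating formula \eqref{eq:alternating-formula} in the proof of \cref{prop:algebraic-duality}. Rewrite $\prod_{(i,j)\in\Psi}(1-qx_i/x_j)^{-1}$ followed by Weyl symmetrization as an iterated application of Demazure operators. Following Shimozono--Weyman and the Euler-characteristic interpretation in \cite{BlaMorPunSum2019}, peel off the first row of $\Psi$. The $i$th step contributes the factors $(1-qx_i/x_j)^{-1}$ for $j>h_i$, and the Demazure operator for the parabolic of the indices $i,\dots,h_i$ restores symmetry in those variables. This yields a recursive expression: a graded tensor factor $B^{\mu_i}$ (a single-row crystal) is adjoined, and then a Demazure truncation by the permutation governed by $d_i=h_i-i+1$ is applied. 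I would take the Demazure character formula as known.

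The second step is to realize each operation on crystals. Tensor products of single-row Kirillov--Reshetikhin crystals carry the energy function as their $q$-grading. Demazure operators correspond to Demazure subcrystals (Kashiwara, Littelmann), and a Demazure subcrystal of a tensor product of rows is again cut out by a crystal-theoretic condition. The result is a subset $\mathcal B_\Psi(\mu)\subseteq B^{\mu_1}\otimes\cdots\otimes B^{\mu_n}$ whose character, graded by energy, is $H(\Psi;\mu)$. Consequently $K^\Psi_{\lambda,\mu}(q)$ is the energy generating function of its classical highest-weight elements of weight $\lambda$. By Nakayashiki--Yamada, energy on highest-weight elements of tensor products of rows equals charge of the corresponding recording tableau under RSK. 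Combined with LS invariance from \cref{sec:tableaux}, this reduces the theorem to a purely combinatorial matching.

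The third step is to match the highest-weight elements of $\mathcal B_\Psi(\mu)$, transported by RSK to tableaux $T$ of shape $\lambda$ and content $\mu$, with the tableaux satisfying \cref{def:katabolizable}. I would proceed by induction on $n$ along the recursion of the first step. Removing the factor $B^{\mu_1}$ corresponds to deleting the $1$'s of $T$ and applying $\dec$. This is the map $\kat$, which moves row $u_1$ to the bottom. The requirement that all $1$'s lie in the first row is exactly the condition that the removed factor splits off as a highest-weight component. The Demazure truncation for rows $d_i,\dots,n$ corresponds to the partial insertions $P_{n-1},\dots,P_{d_i}$ in $F_{d_i,n}$, which sort the lower rows into a tabloid that is Knuth-equivalent but only partly rectified.

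I expect this last identification to be the main obstacle. Tabloids are not tableaux, so the intermediate objects $A^{(i)}$ must be tracked with their row positions. One must show that the partial insertion $P_r$ intertwines the crystal operators $e_j$ for $j\ge r$ with the Demazure truncation. I would first try to prove a local lemma: for a two-row tabloid $\langle A_r;A_{r+1}\rangle$, $P_r$ is the combinatorial $R$-matrix composed with rectification. Then I would check that Demazure truncation by a simple reflection is detected by the position of letters after $P_r$. Iterating this lemma over $r=n-1,\dots,d_i$ should give the inductive step.
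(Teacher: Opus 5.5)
Your architecture matches that of \cite{BlaMorPun2024}: the energy-graded character of a DARK crystal, Nakayashiki--Yamada to turn energy into charge, and $\inv$ to match highest-weight elements with katabolizable tableaux. The paper itself only cites the theorem and recalls the ingredients \cref{def:DARK}, \eqref{eq:kat-crystal}, and \cref{prop:BMP-components}. There are, however, genuine gaps in how you propose to carry out each step.

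\textbf{The first step has no source for the powers of $q$.} You never say what crystal operation corresponds to multiplying by $\prod_{j>h_i}(1-qx_i/x_j)^{-1}$. This is an infinite series, and it becomes finite only after the final straightening. Classical Demazure operators and multiplication by $x_1^{\mu_i}$ do not involve $q$, so ``adjoin a row, then truncate classically'' cannot reproduce it. The missing idea is the $q$-twisted rotation $\Phi f(x_1,\dots,x_\ell)=f(x_2,\dots,x_\ell,qx_1)$, which is the character shadow of $\tau$. Writing $\pi_w$ for Demazure operators, with everything acting from right to left on $1$, the identity to prove is the character form of \eqref{eq:root-tail-stages} for $\ell\geq\max\{n,2\}$:
\[
	H(\Psi;\mu)(x_1,\dots,x_\ell)
	= \pi_{w_0^{(\ell)}}\,x_1^{\mu_1}\,\Phi\,\pi_{s_{\ell-1}\cdots s_{d_1}}\,x_1^{\mu_2}\,\Phi\,\pi_{s_{\ell-1}\cdots s_{d_2}}\,x_1^{\mu_3}\cdots\Phi\,\pi_{s_{\ell-1}\cdots s_{d_{n-1}}}\,x_1^{\mu_n}.
\]
For $\Psi=\Delta_2^+$ and $\ell=2$ this reads $\pi_{s_1}\bigl(x_1^{\mu_1}\Phi(\pi_{s_1}x_1^{\mu_2})\bigr)=\sum_{b=0}^{\mu_2}q^bs_{(\mu_1+b,\mu_2-b)}$. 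Two consequences for your sketch:
\begin{itemize}
\item The Demazure words are the cycles $s_{\ell-1}\cdots s_{d_i}$, not parabolic longest elements on $\{i,\dots,h_i\}$.
\item Pushing the $\tau$'s inward with $\tau\widetilde f_{\ell-1}=\widetilde f_0\tau$ shows the truncations are affine. So the $q$-grading comes from $0$-arrows, not from a grading on the adjoined row, and identifying it with energy is a further step (cf.\ \cite{SchTin2012}).
\end{itemize}

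\textbf{Realizing $\pi_i$ by $\F_i$ needs the string property.} The equality $\chi(\F_iX)=\pi_i\chi(X)$ of characters requires every intermediate set, such as $\tau(X_{j+1})\otimes\{1^{\mu_j}\}$, to meet each $i$-string in $\emptyset$, its head, or the whole string. This does not follow formally from the theory of a single Demazure crystal. It is where generalized Demazure crystal results such as \cite{Bla2020} enter, as in the proof of \cref{lem:dark-zero}.

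\textbf{The local lemma in your third step does no work.} On the tabloid side, $P_r$ is by definition the rectification of $A_{r+1}A_r$. Since $R$ preserves Knuth classes, composing with $R$ adds nothing and never reaches a Demazure truncation. The link lives on the dual side, where the row indices of $A$ become the letters of $\inv(A)$. The needed fact is the RSK-duality identity $\inv(P_rA)=\widetilde e_r^{\max}\inv(A)$. Together with the fact that $\inv(\kat A)$ is $\tau^{-1}$ applied to $\inv(A)$ with its rightmost factor removed, this gives the formula for $\inv(F_{d,\ell}A)$ in \eqref{eq:kat-crystal}. Combine it with two further facts:
\begin{itemize}
\item If $X$ is closed under $\widetilde e_i$ (part of the string property above), then $b\in\F_iX$ if and only if $\widetilde e_i^{\max}b\in X$.
\item $b\in\tau(X)\otimes\{1^m\}$ if and only if $b_1=1^m$ and $\tau^{-1}$ of $b$ with $b_1$ removed lies in $X$.
\end{itemize}
Then membership of the classically highest element $\inv(T)$ in $\mathcal D_{\Psi,\mu}$ unwinds, one stage of \eqref{eq:root-tail-stages} at a time, into the first-row conditions on $A^{(0)},\dots,A^{(n-1)}$. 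This is exactly \cref{prop:BMP-components}.
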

For $\Psi = \Delta_n^+$, we have $d_i = 1$ for all $1 \leq i < n$, so every step uses $F_{1, n}$ and produces a tableau.
Thus every tableau is $\Psi$-katabolizable, and the formula reduces to \eqref{eq:charge-formula}.

\subsection{DARK crystals}\label{subsec:crystal-prelim}

Kashiwara and Lusztig independently introduced crystal theory to study representations of quantum groups combinatorially~\cite{Kas1991,Lus1990}.
We follow~\cite{BumSch2017,HonKan2002} for the basic definitions and notation.

For a crystal $B$ with index set $I$, we denote its crystal operators by $\widetilde e_i, \widetilde f_i$ and its weight function by $\wt$.
If an operator cannot be applied, its value is $0$.
For $b \in B$ and $i \in I$, we set
\[
	\varepsilon_i(b) = \max\{a \in \mathbb Z_{\geq 0} : \widetilde e_i^{\,a}b \neq 0\},
	\qquad
	\varphi_i(b) = \max\{a \in \mathbb Z_{\geq 0} : \widetilde f_i^{\,a}b \neq 0\}.
\]
The crystal graph has an arrow $b \xrightarrow{i} b'$ when $\widetilde f_i b = b'$, or equivalently $\widetilde e_i b' = b$.

For crystals $B$ and $C$ of the same type, the tensor product $B \otimes C$ has underlying set $B \times C$ and weight $\wt(x \otimes y) = \wt(x) + \wt(y)$.
We use the tensor product convention opposite to Kashiwara's~\cite[Section~2.4]{Kas1991}:
\[
\begin{aligned}
	\widetilde e_i(x \otimes y)&=
	\begin{cases}
		\widetilde e_i x \otimes y, &\varepsilon_i(x) > \varphi_i(y), \\
		x \otimes \widetilde e_i y, &\varepsilon_i(x) \leq \varphi_i(y),
	\end{cases}\\
	\widetilde f_i(x \otimes y)&=
	\begin{cases}
		\widetilde f_i x \otimes y, &\varepsilon_i(x) \geq \varphi_i(y), \\
		x \otimes \widetilde f_i y, &\varepsilon_i(x) < \varphi_i(y).
	\end{cases}
\end{aligned}
\]
A tensor product containing $0$ is interpreted as $0$.
The functions $\varepsilon_i$ and $\varphi_i$ satisfy
\begin{align}
	\varepsilon_i(x \otimes y)
		&= \varepsilon_i(y) + \max(0, \varepsilon_i(x) - \varphi_i(y)),
		\label{eq:tensor-epsilon}\\
	\varphi_i(x \otimes y)
		&= \varphi_i(x) + \max(0, \varphi_i(y) - \varepsilon_i(x)).
		\label{eq:tensor-phi}
\end{align}

We use the single-row Kirillov-Reshetikhin (KR) crystals $B^{1, s}$ of affine type $A_{n - 1}^{(1)}$, with $n \geq 2$ and index set $I = \{0, 1, \ldots, n - 1\}$~\cite[Sections~2.7 and~4.2]{SchTin2012}.
For $s \geq 1$, the elements of $B^{1, s}$ are weakly increasing rows $x = x_1\cdots x_s$ of length $|x| = s$, with $x_i \in [n]$.
For $1 \leq i < n$, the operator $\widetilde f_i$ replaces one $i$ by $i + 1$, while $\widetilde f_0$ replaces one $n$ by $1$.
The resulting row is sorted, and $\widetilde e_i$ reverses the corresponding operation.
We allow empty rows by setting $B^{1, 0} = \{\emptyset\}$.
For $\alpha = (\alpha_1, \ldots, \alpha_p) \in \mathbb Z_{\geq 0}^p$, we set $B^\alpha = B^{1, \alpha_p} \otimes \cdots \otimes B^{1, \alpha_1}$ and write its elements as
\[
	b = b_p \otimes \cdots \otimes b_1,
	\qquad b_j \in B^{1, \alpha_j}\quad(1 \leq j \leq p).
\]
We call each $b_j$ a \emph{tensor factor} of $b$, and its entries \emph{letters}.
The $\mathfrak{gl}_n$ weight $\wt(b)$ is its content vector.
For $1 \leq i < n$, we have $\varphi_i(b) - \varepsilon_i(b) = \langle\wt(b), \evec_i - \evec_{i + 1}\rangle$.
The connected components after deleting the $0$-arrows are called \emph{classical components}.
An element $b$ is \emph{classically highest} if $\widetilde e_i b = 0$ for all $1 \leq i < n$, and \emph{classically lowest} if $\widetilde f_i b = 0$ for all $1 \leq i < n$.

Row reading embeds tableaux of shape $\lambda$ with entries in $[n]$ into $(B^{1, 1})^{\otimes |\lambda|}$ as a classical subcrystal~\cite[Section~3.2]{Shi2002Affine}.
The LS action of $s_i$ sends each tableau to the tableau on the same $i$-string with $\varepsilon_i$ and $\varphi_i$ interchanged.

For $1 \leq i < n$, we use $\F_i(X) = \{\widetilde f_i^{\,a}b : b \in X, \ a \geq 0\} \setminus \{0\}$.
For a reduced expression $w = s_{i_1}\cdots s_{i_r}$, we set $\F_w = \F_{i_1} \circ \cdots \circ \F_{i_r}$.
The same notation applies to row crystals with any alphabet $[\ell]$, where $\ell \geq 2$.
For $b \in B^\alpha$ with letters in $[\ell]$, the map $\tau_\ell$ replaces each $j < \ell$ by $j + 1$ and each $\ell$ by $1$, then sorts the letters in each tensor factor.
We write $\tau = \tau_\ell$ when $\ell$ is clear and set $\tau(0) = 0$.
For $i \in \mathbb Z/\ell\mathbb Z$, we have~\cite[Proposition~5.5 and (5.1)]{Nao2013}
\begin{equation}\label{eq:tau-epsilon}
	\tau\widetilde e_i = \widetilde e_{i + 1}\tau, \qquad
	\tau\widetilde f_i = \widetilde f_{i + 1}\tau, \qquad
	\varepsilon_i(\tau b) = \varepsilon_{i - 1}(b).
\end{equation}

We recall the DARK (Kirillov-Reshetikhin affine Demazure) construction in the form used here~\cite[Definition~2.10]{BlaMorPun2024}.
\begin{defn}\label{def:DARK}
For $1 \leq p \leq \ell$ with $\ell \geq 2$, let $\Psi \subseteq \Delta_p^+$ be an upper root ideal and $\mu = (\mu_1, \ldots, \mu_p)$ a partition.
With alphabet $[\ell]$ and $\tau = \tau_\ell$, we define $\mathcal D_{\Psi, \mu} \subseteq B^\mu$ for $p \geq 2$ by
\begin{equation}\label{eq:root-tail-stages}
\begin{aligned}
	X_p^\Psi(\mu)&= \F_{s_{\ell - 1}\cdots s_{d_{p - 1}}}\{1^{\mu_p}\}, \\
	X_j^\Psi(\mu)&= \F_{s_{\ell - 1}\cdots s_{d_{j - 1}}}
		\bigl(\tau(X_{j + 1}^\Psi(\mu)) \otimes \{1^{\mu_j}\}\bigr)
		\quad(2 \leq j < p), \\
	Z_{\Psi, \mu}&= \tau(X_2^\Psi(\mu)) \otimes \{1^{\mu_1}\},
	\qquad \mathcal D_{\Psi, \mu} = \F_{w_0^{(\ell)}}Z_{\Psi, \mu},
\end{aligned}
\end{equation}
where $\dvec(\Psi) = (d_1, \ldots, d_{p - 1})$ and $\F_{s_{\ell - 1}\cdots s_d}$ is the identity when $d = \ell$.
For $p = 1$, we set $Z_{\emptyset, \mu} = \{1^{\mu_1}\}$ and $\mathcal D_{\emptyset, \mu} = \F_{w_0^{(\ell)}}Z_{\emptyset, \mu}$.
\end{defn}
We write $\mathcal D_{\Psi, \mu}^{[\ell]}$ when the alphabet needs to be specified and omit the superscript when it is clear.
The recursively defined subsets are independent of the reduced expressions~\cite[Theorem~3.7 and Corollary~3.11]{Bla2020}.

\begin{exmp}\label{ex:root-recursion}
For $p = \ell = 3$, we restrict the ideal in \cref{ex:root-ideal-dyck} to its last three vertices and relabel by $[3]$, giving $\Psi = \{(1, 3), (2, 3)\}$.
We use $\mu = (1, 1, 1)$, with $\dvec = (2, 1)$.
We have
\[
\begin{aligned}
	X_3^\Psi(\mu)&= \F_2\F_1\{1\} = \{1, 2, 3\}, \\
	X_2^\Psi(\mu)&= \F_2\{1 \otimes 1, 2 \otimes 1, 3 \otimes 1\}\\
		&= \{1 \otimes 1, 2 \otimes 1, 3 \otimes 1\}, \\
	Z_{\Psi, \mu}&= \{1 \otimes 2 \otimes 1, 2 \otimes 2 \otimes 1, 3 \otimes 2 \otimes 1\}.
\end{aligned}
\]
For $w_0 = s_1s_2s_1$, the composition $\F_1 \circ \F_2 \circ \F_1$ gives
\[
	\mathcal D_{\Psi, \mu} = \left\{
	\begin{gathered}
		1 \otimes 2 \otimes 1, \quad1 \otimes 3 \otimes 1, \quad1 \otimes 3 \otimes 2, \\
		2 \otimes 2 \otimes 1, \quad2 \otimes 3 \otimes 1, \quad2 \otimes 3 \otimes 2, \\
		3 \otimes 2 \otimes 1, \quad3 \otimes 3 \otimes 1, \quad3 \otimes 3 \otimes 2
	\end{gathered}\right\}.
\]
The classical highest-weight elements are $3 \otimes 2 \otimes 1$ and $1 \otimes 2 \otimes 1$.
\end{exmp}

For a tabloid $A = \langle A_1; \ldots; A_n\rangle$ of content $\alpha = (\alpha_1, \ldots, \alpha_p)$, we set
\[
\begin{aligned}
	b_j(A)&= 1^{\wt_j(A_1)}\cdots n^{\wt_j(A_n)} \in B^{1, \alpha_j}
	\qquad(1 \leq j \leq p), \\
	\inv(A)&= b_p(A) \otimes \cdots \otimes b_1(A).
\end{aligned}
\]
The tensor factor $b_j(A)$ contains one letter $i$ for each occurrence of $j$ in row $A_i$.

\begin{exmp}\label{ex:row-encoding}
For $T$ from \cref{ex:running-tableau}, the tensor factors are $b_1(T) = 111$, $b_2(T) = 22$, $b_3(T) = 13$, and $b_4(T) = 23$.
Thus $\inv(T) = (23) \otimes (13) \otimes (22) \otimes (111)$.
\end{exmp}

For $b \in B^\alpha$ with letters in $[\ell]$, we start with $P = Q = \emptyset$ and process the tensor factors in the order $b_1, b_2, \ldots, b_p$~\cite[Appendix~A.4.1, Proposition~2]{Ful1997}.
Within each tensor factor $b_j$, the letters are inserted into $P$ from right to left by column insertion.
Whenever a new box is created in $P$, the entry $j$ is placed in the corresponding box of $Q$.
The resulting insertion and recording tableaux are denoted $P(b)$ and $Q(b)$.
They have the same shape, and $Q(b)$ has content $\alpha$.
Classical crystal operators act on $P(b)$ and fix $Q(b)$~\cite[Proposition~3.3]{Shi2002Affine}.
The indices $j$ increase in insertion order, so
\begin{equation}\label{eq:recording-prefix}
	Q(b)^{\leq m} = Q(b_m \otimes \cdots \otimes b_1)\qquad(1 \leq m \leq p).
\end{equation}
For a tableau $T$ with at most $\ell$ rows and content $\alpha$, $\mathcal C_T = \{b \in B^\alpha : Q(b) = T\}$
is a classical component with highest element $\inv(T)$~\cite[Proposition~3.4 and Remark~3.7]{Shi2002Affine}.
If $T$ has shape $\lambda$, then $P(\inv(T))$ is the Yamanouchi tableau $Y_\lambda$.
For any tabloid $A$, we have~\cite[Appendix~A.4.1]{Ful1997}
\begin{equation}\label{eq:RSK-tabloid}
	Q(\inv(A)) = P(\rw(A)).
\end{equation}

For $\Psi \subseteq \Delta_p^+$ and $\ell \geq p$, appending $\ell - p$ empty rows below row $p$ does not change $\Psi$-katabolizability.
After each application of $\kat$, the operators $P_{\ell - 1}, \ldots, P_p$ move the last row to row $p$, and the remaining partial insertions give the $p$-row computation.

\begin{prop}[{\cite[Theorem~6.21 and Proposition~6.15]{BlaMorPun2024}}]\label{prop:BMP-components}
For an upper root ideal $\Psi \subseteq \Delta_p^+$, a partition $\mu = (\mu_1, \ldots, \mu_p)$, and $\ell \geq \max\{p, 2\}$, the set $\mathcal D_{\Psi, \mu}$ with alphabet $[\ell]$ decomposes into classical components:
\begin{equation*}
	\mathcal D_{\Psi, \mu}
	= \bigsqcup_{\substack{T\text{ of content }\mu\\T\text{ is }\Psi\text{-katabolizable}}}\mathcal C_T.
\end{equation*}
\end{prop}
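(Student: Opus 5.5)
This proposition is quoted from~\cite{BlaMorPun2024}. I would prove it in two stages: first for the alphabet $[p]$ (with $p\geq 2$), which is the setting of~\cite[Theorem~6.21]{BlaMorPun2024}, and then for larger alphabets $[\ell]$. The common framework is to check two things: that $\mathcal D_{\Psi,\mu}$ is a union of full classical components, and that its classically highest elements are exactly the elements $\inv(T)$ with $T$ of content $\mu$ and $\Psi$-katabolizable. Given both, the decomposition follows, since each $\mathcal C_T$ has highest element $\inv(T)$ and the sets $\mathcal C_T$ are disjoint by definition of $Q$.

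\emph{Union of components.} Closure under all $\widetilde f_i$, $1\le i<\ell$, holds because $\mathcal D_{\Psi,\mu}=\F_{w_0^{(\ell)}}Z_{\Psi,\mu}$. For closure under all $\widetilde e_i$ I would invoke the Demazure property of $\F_{w_0}$ on a normal $\mathfrak{gl}_\ell$-crystal. The operator $\F_{w_0}$ sends any subset $X$ to the union of the classical components that contain an element of the form $\widetilde f$-reachable from $X$ along the reduced word for $w_0$. In particular it contains the lowest weight element of every component meeting $X$, and a Demazure crystal containing the lowest element is the whole component. Since Blasiak's independence result~\cite{Bla2020} lets me choose reduced words freely, this step uses only standard Demazure crystal facts.

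\emph{Highest elements.} This is the key step, and I would prove it by induction on $p$ along the recursion~\eqref{eq:root-tail-stages}, comparing it with the katabolism sequence~\eqref{eq:katabolism-sequence} through $\inv$ and~\eqref{eq:RSK-tabloid}. The dictionary I would establish is the following.
\begin{itemize}
\item Tensoring with $\{1^{\mu_j}\}$ on the right records the letter $j$ placed in row~$1$. The condition that a classically highest element exists in the next stage should translate into ``all $1$'s lie in the first row.''
\item The shift $\tau$ corresponds to $\kat$: deleting the row of $1$'s, decrementing, and cycling the first row to the bottom. This is the cyclic relabelling $j\mapsto j+1$, $\ell\mapsto 1$ of row indices, compatible with~\eqref{eq:tau-epsilon}.
\item The Demazure operator $\F_{s_{\ell-1}\cdots s_{d}}$ corresponds, on recording tabloids, to the partial insertions $P_{\ell-1},\dots,P_d$.
\end{itemize}
The last correspondence is where I expect the main obstacle. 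It requires a local lemma: for a two-row tabloid $\langle A_r;A_{r+1}\rangle$, the $\mathfrak{sl}_2$-string closure $\F_r$ of the corresponding tensor factors matches replacing the two rows by the rows of $P(A_{r+1}A_r)$. I would try to prove this from the fact that $\F_r$ of an element with $\varepsilon_r=0$ is its entire $r$-string, combined with the Knuth/RSK compatibility of~\eqref{eq:RSK-tabloid} restricted to the letters $r,r+1$. Chaining these local statements along $s_{\ell-1}\cdots s_d$ reproduces $F_{d,\ell}$. The content $\mu$ is a partition and each stage begins from a highest-weight-type element, and together these should guarantee that each intermediate stage is again a union of Demazure-type pieces, so that the induction closes.

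\emph{Larger alphabets.} For $\ell>p$, I would use the observation recorded just before the statement: appending $\ell-p$ empty rows does not change $\Psi$-katabolizability, because $P_{\ell-1},\dots,P_p$ merely move the last row up to row $p$. On the crystal side, the extra operators $\F_{s_{\ell-1}\cdots s_p}$ appearing in~\eqref{eq:root-tail-stages} correspond exactly to these trivial insertions. Hence the same dictionary applies verbatim with alphabet $[\ell]$, and the case $p=1$ is immediate since $\mathcal D_{\emptyset,\mu}$ is the single component $\mathcal C_{Y_{(\mu_1)}}$.
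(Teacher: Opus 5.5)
The overall plan matches the strategy behind the cited result of \cite{BlaMorPun2024}: show $\mathcal D_{\Psi,\mu}$ is a union of classical components, then match its classically highest elements with the $\inv(T)$ by running \eqref{eq:root-tail-stages} against the katabolism sequence. Your reduction for $\ell>p$ is exactly the empty-row remark the paper makes before the statement. However, both load-bearing steps are missing the same ingredient.

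\textbf{Union of components.} Your assertion that $\F_{w_0}$ sends an arbitrary subset to the union of the components it meets is false. For $\ell=2$ and $B^{1,1}=\{1,2\}$ one has $\F_1\{2\}=\{2\}$, which contains the lowest element of its component but is not the component. The fact that a Demazure crystal containing the lowest element is the whole component only helps once you know the set is Demazure. What you need is $\mathcal D_{\Psi,\mu}\cap S\in\{\emptyset,S\}$ for every classical $i$-string $S$. Closure under $\widetilde f_i$ (take a reduced word of $w_0^{(\ell)}$ beginning with $s_i$) reduces this to $\mathcal D_{\Psi,\mu}\cap S\in\{\emptyset,\{u_S\},S\}$. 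That is Kashiwara's string property, and it is available only because $\mathcal D_{\Psi,\mu}\otimes\{u_{\mu_1\Lambda_0}\}$ is a disjoint union of affine Demazure crystals by \cite[Theorem~2.2 and Corollary~3.11]{Bla2020}. This is the same input as in \cref{lem:dark-zero}, and classical strings through $b\otimes u_{\mu_1\Lambda_0}$ are just $S\otimes u_{\mu_1\Lambda_0}$. This is where the hypotheses that $\mu$ is a partition and $\Psi$ is an upper root ideal genuinely enter. It is not a standard fact about arbitrary subsets.

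\textbf{Highest elements.} Here your dictionary points the wrong way, and the same property is what is missing. Partial insertion is not string closure: $\F_r$ outputs a set, while $P_r$ outputs a single tabloid. The correct identities are \eqref{eq:kat-crystal}, namely $\inv(P_rA)=\widetilde e_r^{\max}\inv(A)$ and $\inv(F_{d,\ell}A)=\widetilde e_d^{\max}\cdots\widetilde e_{\ell-1}^{\max}\tau^{-1}\pi\inv(A)$. So $\kat$ corresponds to $\tau^{-1}\pi$, and katabolism undoes the recursion with raising operators in reverse order.

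To chain these identities, consider every intermediate set $W=\F_{i-1}\circ\cdots\circ\F_{d}(Y)$ occurring in \eqref{eq:root-tail-stages}. Here $Y$ is $\tau(X^\Psi_{j+1}(\mu))\otimes\{1^{\mu_j}\}$ or $\{1^{\mu_p}\}$, $d=d_{j-1}\leq i<\ell$, and the composite is empty when $i=d$. You need $b\in\F_iW$ if and only if $\widetilde e_i^{\max}b\in W$. This holds exactly when every $i$-string meeting $W$ has its top vertex in $W$, and it fails for general $W$ (take $W=\{2\}$ and $b=2$ as above).

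Granting it, for $p\geq 3$ one gets $\inv(T)\in Z_{\Psi,\mu}$ iff $\inv(A^{(1)})\in\tau(X^\Psi_3(\mu))\otimes\{1^{\mu_2}\}$. That in turn holds iff $A^{(1)}$ satisfies the first-row condition and $\inv(A^{(2)})\in\tau(X^\Psi_4(\mu))\otimes\{1^{\mu_3}\}$, and so on. Classically highest elements of $\F_{w_0}Z_{\Psi,\mu}$ automatically lie in $Z_{\Psi,\mu}$.

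Your remark that the partition condition and highest-weight-type starting elements ``should guarantee'' this Demazure-type behaviour is therefore not a routine check. It is the deep part of the theorem being quoted, and as written your induction does not close.
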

For a partition $\lambda$, the map $T \mapsto \inv(T)$ gives a bijection from $\tset_\Psi(\lambda, \mu)$ to the classical highest-weight elements of $\mathcal D_{\Psi, \mu}$ of weight $\lambda$.
Its inverse is $b \mapsto Q(b)$.
For example, in \cref{ex:root-recursion}, $Q(3 \otimes 2 \otimes 1) = \begin{ytableau}1\\2\\3\end{ytableau}$ and $Q(1 \otimes 2 \otimes 1) = \begin{ytableau}1&3\\2\end{ytableau}$.

\section{Preservation of katabolizability under \texorpdfstring{$\Phi_{n, k}$}{Phi(n,k)}}\label{sec:dark-proof}

For the inductive step with strictly decreasing content, we characterize $\Psi$-katabolizability using tableaux of contents $(\mu_1, \ldots, \mu_{n - 1})$ and $(\mu_2, \ldots, \mu_n)$.
The map $\Phi_{n, k}$ interchanges the roles of the two tableaux, giving an induction on $n$.
We then extend the result to arbitrary partition content by a piecewise-linear argument.

\subsection{Tableau restrictions and katabolizability}\label{subsec:traces}

We first compare the katabolism sequences of $T$ and $T^{\leq n - 1}$, whose content is $(\mu_1, \ldots, \mu_{n - 1})$.

We use tabloids with $\ell \geq 2$ row positions and row crystals with alphabet $[\ell]$.
We write $\widetilde e_i^{\max}(b) = \widetilde e_i^{\,\varepsilon_i(b)}b$.
Products of operators act from right to left, and an empty product is the identity.
For a tabloid $A$, the partial insertion and katabolism operators satisfy~\cite[Propositions~6.9 and~6.13]{BlaMorPun2024}
\begin{equation}\label{eq:kat-crystal}
	\inv(P_iA) = \widetilde e_i^{\max}\inv(A),
	\qquad
	\inv(F_{d, \ell}A)
	= \widetilde e_d^{\max}\cdots\widetilde e_{\ell - 1}^{\max}
		\tau_\ell^{-1}\pi\inv(A),
\end{equation}
where $\pi$ removes the rightmost tensor factor.
We say that $A$ satisfies the \emph{first-row condition} if all $1$'s of $A$ lie in its first row.
This holds exactly when $b_1(A) = 1^{|b_1(A)|}$.

\begin{exmp}\label{ex:crystal-step}
For $T$ and $A^{(1)}$ from \cref{ex:katabolism}, we have $\inv(T) = (23) \otimes (13) \otimes (22) \otimes (111)$.
Thus $\tau_4^{-1}\pi\inv(T) = (12) \otimes (24) \otimes (11)$.
Applying $\widetilde e_3^{\max}$ and then $\widetilde e_2^{\max}$ gives
\[
	(12) \otimes (24) \otimes (11)
	\xrightarrow{\widetilde e_3^{\max}} (12) \otimes (23) \otimes (11)
	\xrightarrow{\widetilde e_2^{\max}} (12) \otimes (22) \otimes (11) = \inv(A^{(1)}).
\]
Thus the second identity in \eqref{eq:kat-crystal} holds in this example.
\end{exmp}

For an arbitrary $\ell$-row tabloid $A^{(0)}$ and indices $d_t \in [\ell]$, we define $A^{(t)}$ by iterating the recurrence in \eqref{eq:katabolism-sequence} with $n = \ell$.
For a word $v$, the subword $v^{\geq j}$ consists of all letters at least $j$.

\begin{lem}\label{lem:top-rows}
For $t \geq 0$ and $2 \leq j \leq \ell$ satisfying $d_{t + a} \geq j - a + 1$ for $1 \leq a < j$, the tabloid $A^{(t + j - 1)}$ satisfies the first-row condition if and only if every entry $j$ of $A^{(t)}$ lies in row $j$.
\end{lem}
\begin{proof}
For $1 \leq a < j$, the partial insertions in $F_{d_{t + a}, \ell}$ leave row $j - a$ unchanged since $d_{t + a} \geq j - a + 1$.
Hence
\[
	A^{(t + a)}_{j - a}
	= (\kat A^{(t + a - 1)})_{j - a}
	= \dec\bigl((A^{(t + a - 1)}_{j - a + 1})^{\geq 2}\bigr).
\]
Applying this identity for $a = j - 1, j - 2, \ldots, 1$ with $(\dec v)^{\geq c} = \dec(v^{\geq c + 1})$, we obtain
\begin{equation*}
	A^{(t + j - 1)}_1 = \dec^{\,j - 1}\bigl((A^{(t)}_j)^{\geq j}\bigr).
\end{equation*}
Since $\kat$ deletes the $1$'s and decreases the other entries by one, the $1$'s in $A^{(t + j - 1)}$ come exactly from the entries $j$ of $A^{(t)}$.
Therefore, all these $1$'s lie in the first row if and only if every entry $j$ of $A^{(t)}$ lies in row $j$.
\end{proof}

For a partition $\mu$ with at most $\ell$ parts, \cref{lem:top-rows} with $t = 0$ and $\dvec(\emptyset) = (\ell, \ell - 1, \ldots, 2)$ shows that $Y_\mu$ is the only $\emptyset$-katabolizable tableau of content $\mu$.

\begin{lem}\label{lem:prefix-reduction}
For a tableau $T$ with entries in $[\ell]$ and an upper root ideal $\Psi \subseteq \Delta_\ell^+$, let $A^{(0)}, \ldots, A^{(\ell - 1)}$ be the $\Psi$-katabolism sequence of $T$.
For $2 \leq m \leq \ell$, the tableau $T^{\leq m}$ is $(\Psi \cap \Delta_m^+)$-katabolizable if and only if $A^{(1)}, \ldots, A^{(m - 1)}$ satisfy the first-row condition.
\end{lem}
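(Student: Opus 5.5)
The plan is to compare the $\Psi$-katabolism sequence of $T$ with the $(\Psi\cap\Delta_m^+)$-katabolism sequence of $T^{\leq m}$ step by step. Let $B^{(0)} = T^{\leq m}$, viewed as a tabloid with $\ell$ row positions, and let $B^{(i)} = F_{d_i,\ell}(B^{(i-1)})$ for $1\le i<m$. Here $d_i = d_i(\Psi)$ for $i<m$. These agree with $\dvec(\Psi\cap\Delta_m^+)$ truncated at $m$, since $d_i(\Psi\cap\Delta_m^+) = \min(d_i,\, m-i+1)$. The point to check first is that replacing $d_i$ by $\min(d_i, m-i+1)$ does not change $B^{(i)}$. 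After $i-1$ katabolisms the letters of $B^{(i-1)}$ lie in $[m-i+1]$. By the remark preceding \cref{prop:BMP-components} (empty rows below row $p$ are harmless), the partial insertions $P_{\ell-1},\ldots,P_{m-i+1}$ only move the katabolised first row up past empty rows. So using the $\ell$-row or $m$-row convention gives the same result for the purposes of the first-row condition.

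The key claim is that for every $0\le i<m$,
\[
	B^{(i)} = \bigl(A^{(i)}\bigr)^{\leq m-i},
\]
meaning each row restricted to entries $\le m-i$, with row positions kept. I would prove this through the crystal description \eqref{eq:kat-crystal}. Restricting a tabloid to entries $\le c$ corresponds to deleting the tensor factors $b_j$ with $j>c$, that is, taking the right part $b_c\otimes\cdots\otimes b_1$ of $\inv(A)$. The map $\pi$ removes $b_1$, $\tau^{-1}$ acts factorwise, and $\widetilde e_i^{\max}$ on a tensor product $x\otimes y$ does not act compatibly with truncation to $y$ in general. So for the partial insertion step I would argue directly on tabloids instead. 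The insertion $P_r$ replaces rows $r,r+1$ by the two rows of $P(A_{r+1}A_r)$, and row insertion commutes with restriction to letters $\le c$, since $P(v)^{\leq c} = P(v^{\leq c})$: large letters never bump small ones. Likewise $\kat$ commutes with restriction, sending entries $\le c$ to entries $\le c-1$. Induction on $i$ then gives the claim.

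With the claim in hand, the first-row condition for $B^{(i)}$ concerns only the $1$'s, which are all $\le m-i$. So $B^{(i)}$ satisfies the first-row condition if and only if $A^{(i)}$ does, for $1\le i<m$. For $i=0$, $T^{\leq m}$ is a tableau and hence automatically satisfies it. Therefore $T^{\leq m}$ is $(\Psi\cap\Delta_m^+)$-katabolizable exactly when $A^{(1)},\dots,A^{(m-1)}$ satisfy the first-row condition. This holds provided the definition of katabolizability for $\Delta_m^+$ with $m$ rows matches the $\ell$-row computation, which is the empty-rows remark.

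The main obstacle is the mismatch between the alphabet and row sizes $m$ and $\ell$, together with the fact that $d_i(\Psi)$ may exceed $m-i+1$. I expect to handle this with the observation that $P_r$ acting on two rows whose restricted contents are empty, or where the lower one is empty, is trivial or merely moves a row. So the extra insertions $P_r$ with $r\ge m-i+1$ only shift the katabolised row upward to position $m-i$, and this matches the $m$-row computation exactly. I would verify this carefully using the fact that after restriction, rows below position $m-i$ are empty except for the newly appended last row.
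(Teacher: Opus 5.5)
Your restriction step is sound. Since $P(v)^{\leq c} = P(v^{\leq c})$, restriction commutes with each $P_r$ and with $\kat$, so the sequence built from $T^{\leq m}$ with the $d_i$ of $\Psi$ satisfies $B^{(i)} = (A^{(i)})^{\leq m-i}$. This is the fact the paper reads off from \eqref{eq:kat-crystal}. Incidentally, in the paper's tensor convention $\widetilde e_i^{\max}(x \otimes y) = x' \otimes \widetilde e_i^{\max}y$, so the crystal route does respect truncation to the rightmost factors.

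The gap is in comparing $B^{(i)}$ with the actual $(\Psi \cap \Delta_m^+)$-katabolism sequence $\bar A^{(i)}$, which uses $\bar d_i = \min\{d_i, m-i+1\}$. You claim this replacement does not change $B^{(i)}$, because the extra insertions only move the katabolised row past empty rows. That is false. $B^{(i-1)}$ is a tabloid, not a tableau, so letters in $[m-i+1]$ can sit in rows below position $m-i+1$. For example, take $\ell = 5$, $m = 4$, $\Psi = \{(1,5)\}$, so $\dvec(\Psi) = (4,4,3,2)$ and $\dvec(\Psi \cap \Delta_4^+) = (4,3,2)$, and let $T$ have rows $113$ and $23$.
\begin{itemize}
\item Both sequences give $\langle 12; \emptyset; \emptyset; 2\rangle$ at step $1$, up to a padded empty fifth row.
\item Applying $\kat$ then gives $\langle \emptyset; \emptyset; 1; \emptyset; 1\rangle$.
\item With $d_2 = 4$, $F_{4,5}$ yields $\langle \emptyset; \emptyset; 1; 1; \emptyset\rangle$.
\item With $\bar d_2 = 3$, the insertion $P_3$ merges two nonempty rows into $\langle \emptyset; \emptyset; 11; \emptyset\rangle$.
\end{itemize}
So the tabloids genuinely diverge. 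Your fallback claim, that rows below position $m-i$ are empty apart from the appended row, fails for the same reason.

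The first-row conditions do still agree step by step, but proving it needs the idea of \cref{lem:top-rows}, which is how the paper finishes. Let $i$ be least with $d_i > m-i+1$, and set $r = m-i+1$. Through step $i-1$ your comparison is valid, so $A^{(i-1)}$ and $\bar A^{(i-1)}$ agree on letters $\leq r$. From then on, $\Psi \cap \Delta_m^+$ contains no root $(p,q)$ with $i \leq p < q \leq m$. Hence $\bar d_{i+a} = r-a$ and $d_{i+a} \geq r-a+1$ for $0 \leq a \leq r-2$.

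It follows that for $1 \leq a < j \leq r$, the partial insertions at step $i-1+a$ leave row $j-a$ untouched in both sequences. So in each sequence, row $1$ at step $i+j-2$ is $\dec^{j-1}$ of the letters $\geq j$ in row $j$ at step $i-1$. Therefore, in both sequences, the first-row condition at step $i+j-2$ is equivalent to all entries $j$ of the common tabloid at step $i-1$ lying in row $j$. Taking $2 \leq j \leq r$ covers steps $i, \ldots, m-1$. Without this argument, your proof only handles the case $d_u \leq m-u+1$ for every $u < m$.
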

\begin{proof}
For $\dvec(\Psi) = (d_1, \ldots, d_{\ell - 1})$, we have
$\dvec(\Psi \cap \Delta_m^+) = (\bar d_1, \ldots, \bar d_{m - 1})$,
where $\bar d_i = \min\{d_i, m - i + 1\}$.
Let $\bar A^{(0)}, \ldots, \bar A^{(m - 1)}$ be the
$(\Psi \cap \Delta_m^+)$-katabolism sequence of $T^{\leq m}$.
We claim that, for every $1 \leq t < m$, $A^{(t)}$ satisfies the first-row condition if and only if $\bar A^{(t)}$ does.

By \eqref{eq:kat-crystal}, the rightmost $q - 1$ tensor factors of
$\inv(F_{d, \ell}A)$ depend only on $d$ and the rightmost $q$ tensor
factors of $\inv(A)$.
The tableau $T^{\leq m}$ lies in the first $m$ rows, and
\[
	\inv(T^{\leq m}) = b_m(T) \otimes \cdots \otimes b_1(T).
\]
For $b \in B^\alpha$ whose letters lie in $[m]$, we have
\[
	\widetilde e_m^{\max}\cdots\widetilde e_{\ell - 1}^{\max}
	\tau_\ell^{-1}b = \tau_m^{-1}b.
\]
For $0 \leq t < m$ with $d_u \leq m - u + 1$ for $1 \leq u \leq t$,
\eqref{eq:kat-crystal} and the formula for $\tau_m^{-1}b$ give, by induction,
\[
	\inv(\bar A^{(t)})
	= b_{m - t}(A^{(t)}) \otimes \cdots \otimes b_1(A^{(t)}).
\]
Hence $A^{(t)}$ and $\bar A^{(t)}$ satisfy the first-row condition
simultaneously.
If $d_u \leq m - u + 1$ for every $1 \leq u < m$, this proves the claim.

Otherwise, we take the least index $1 \leq i < m$ with $d_i > m - i + 1$ and set $r = m - i + 1 \geq 2$.
Then
\begin{equation*}
	\inv(\bar A^{(t)})
	= b_{m - t}(A^{(t)}) \otimes \cdots \otimes b_1(A^{(t)})
	\qquad(0 \leq t < i).
\end{equation*}
For $0 \leq a \leq r - 2$, we have
\[
	d_{i + a} \geq d_i - a \geq r - a + 1, \qquad
	\bar d_{i + a}
	= \min\{d_{i + a}, m - i - a + 1\} = r - a.
\]
At $t = i - 1$, we have
$b_j(A^{(i - 1)}) = b_j(\bar A^{(i - 1)})$ for $2 \leq j \leq r$.

For $2 \leq j \leq r$ and $1 \leq a < j$, we have
$d_{i + a - 1} \geq \bar d_{i + a - 1} = r - a + 1 \geq j - a + 1$.
Thus \cref{lem:top-rows} applies to both sequences with $t = i - 1$,
and the first-row conditions for $A^{(i + j - 2)}$ and
$\bar A^{(i + j - 2)}$ are respectively equivalent to
\[
	b_j(A^{(i - 1)}) = j^{|b_j(A^{(i - 1)})|},
	\qquad
	b_j(\bar A^{(i - 1)}) = j^{|b_j(\bar A^{(i - 1)})|}.
\]

Therefore, $T^{\leq m}$ is $(\Psi \cap \Delta_m^+)$-katabolizable
if and only if $A^{(1)}, \ldots, A^{(m - 1)}$ satisfy the
first-row condition.
\end{proof}

\subsection{The combinatorial \texorpdfstring{$R$}{R}-matrix}\label{subsec:strict-carrier}

We establish the $R$-matrix properties needed to study katabolizability for content $(\mu_2, \ldots, \mu_n)$.

For $s, t \geq 0$, the combinatorial $R$-matrix is an affine crystal isomorphism~\cite{KanKasMisMiwNakNak1992}
\[
	R : B^{1, s} \otimes B^{1, t} \longrightarrow B^{1, t} \otimes B^{1, s}.
\]

For a reduced expression $w = s_{i_1}\cdots s_{i_r} \in \mathfrak S_p$, we set
\[
	R_w = R_{s_{i_1}} \circ \cdots \circ R_{s_{i_r}},
\]
where $R_{s_i} : B^\alpha \longrightarrow B^{s_i\alpha}$ acts as $R$ on $B^{1, \alpha_{i + 1}} \otimes B^{1, \alpha_i}$ and as the identity on the other tensor factors.
The map $R_w$ is independent of the reduced expression and satisfies~\cite[(3.11)-(3.12) and Remark~3.20]{Shi2002Affine}
\begin{equation}\label{eq:R-recording}
	Q(R_wb) = w \cdot Q(b),
\end{equation}
where $w$ acts on $Q(b)$ by the LS action.

For $b \in B^\alpha$ and $y \in B^{1, s}$, we write
\[
	R(b \otimes y) = y' \otimes v,
	\qquad y' \in B^{1, s}, \quad v \in B^\alpha,
\]
where $R$ moves $y$ to the leftmost position by acting on adjacent tensor factors from right to left.

For a tensor product of row crystals $\mathcal B$, a subset $Y \subseteq \mathcal B$, and $a \geq 0$, we consider the following condition.
\begin{equation}\label{eq:zero-string-condition}
\begin{gathered}
	b \in Y\quad\text{if and only if}\quad\widetilde f_0b \in Y\\
	\text{for every }b \in \mathcal B\text{ with }
	\varepsilon_0(b) > a\text{ and }\widetilde f_0b \neq 0.
\end{gathered}
\end{equation}

For the highest element $u_{L\Lambda_0}$ of the affine highest-weight crystal $B(L\Lambda_0)$, we have
\[
	\varepsilon_i(u_{L\Lambda_0}) = 0,
	\qquad\varphi_i(u_{L\Lambda_0}) = L\delta_{i0}
	\qquad(L \geq 0, \ 0 \leq i < \ell),
\]
where $\Lambda_0$ is the fundamental weight corresponding to the affine node $0$.
A $0$-string $S$ is a maximal chain of $0$-arrows.
Its highest vertex $u_S$ satisfies $\widetilde e_0u_S = 0$.

\begin{lem}\label{lem:dark-zero}
We assume that $\mu = (\mu_1, \ldots, \mu_p)$ is a partition with $2 \leq p \leq \ell$ and $\Psi \subseteq \Delta_p^+$ is an upper root ideal.
The set $\mathcal D_{\Psi, \mu}$ satisfies \eqref{eq:zero-string-condition} for every $a \geq \mu_1$.
\end{lem}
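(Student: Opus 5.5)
The plan is to deduce \eqref{eq:zero-string-condition} from the \emph{string property} of Demazure crystals. This is the property that every $i$-string $S$ meets a Demazure crystal in $\emptyset$, in $\{u_S\}$, or in all of $S$. If $\mathcal D_{\Psi, \mu}$ has this property for $0$-strings, then the hypothesis $\varepsilon_0(b) > a \geq 0$ says that $b$ is not the head of its $0$-string. So $b$ and $\widetilde f_0b$ lie in the same string at non-head positions, and they are both in $\mathcal D_{\Psi, \mu}$ or both out. The complication is that $B^\mu$ is not a highest-weight crystal, so the Demazure string property is only available after an embedding. This is exactly where the threshold $a \geq \mu_1$ enters.

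\textbf{Step 1: reduce to a set containing the factor $\{1^{\mu_1}\}$.} Let $y = 1^{m}$ with $m = \mu_1$. Then $\varphi_0(y) = 0$ and $\varepsilon_0(y) = m$, so by \eqref{eq:tensor-epsilon} and the tensor rule, for every $x$ we have
\[
	\widetilde f_0(x \otimes y) = \widetilde f_0x \otimes y,
	\qquad
	\varepsilon_0(x \otimes y) = m + \varepsilon_0(x).
\]
Thus on $Z_{\Psi, \mu} = \tau(X_2^\Psi(\mu)) \otimes \{1^{\mu_1}\}$, $0$-strings with $\varepsilon_0 > a \geq \mu_1$ correspond to non-head parts of $0$-strings of $\tau(X_2^\Psi(\mu))$. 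By \eqref{eq:tau-epsilon}, these in turn correspond to $(\ell - 1)$-strings of $X_2^\Psi(\mu)$. Since $X_2^\Psi(\mu)$ is built by $\F_{s_{\ell - 1}\cdots s_{d_1}}$, whose first applied operator is $\F_{\ell - 1}$ when $d_1 < \ell$, it is a union of full $(\ell - 1)$-strings there. Hence $Z_{\Psi, \mu}$ satisfies the condition. When $d_1 = \ell$ no closure is applied, and I would handle this case by the Demazure argument of Step 2 instead.

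\textbf{Step 2: pass through $\F_{w_0^{(\ell)}}$.} This is the main obstacle: closing under $\F_i$ for $i \neq 0$ can in principle break the $0$-string property. I would first cite the Demazure description behind \cref{def:DARK} (see~\cite{Bla2020,BlaMorPun2024}). In that description, $u_{\mu_1\Lambda_0} \otimes \mathcal D_{\Psi, \mu}$, placed on the side dictated by our tensor convention, is a Demazure subcrystal of a highest-weight affine crystal; equivalently, it is a disjoint union of Demazure crystals. By \eqref{eq:tensor-epsilon}, tensoring with $u_{L\Lambda_0}$, where $\varepsilon_0 = 0$ and $\varphi_0 = L$, truncates each $0$-string of $B^\mu$: precisely its top $L$ elements are cut off. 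With $L = \mu_1 \leq a$, the elements with $\varepsilon_0(b) > a$, together with $\widetilde f_0b$, lie in the surviving part of the string and are not at its head. Applying Kashiwara's string property of Demazure crystals to $u_{\mu_1\Lambda_0} \otimes \mathcal D_{\Psi, \mu}$ then gives \eqref{eq:zero-string-condition} directly.

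\textbf{Fallback.} If that citation is not available in usable form, I would prove by induction on the recursion \eqref{eq:root-tail-stages} that each stage is a union of pieces whose strings meet the set in empty, head-only, or full sets, for every index. For each closure $\F_i$, I would use the standard Demazure lemma: if $X$ has the string property for all $j$ and is $\F$-compatible, then so is $\F_i X$. This rests on the Kashiwara/Littelmann argument via $\F_i\F_j$ relations, and it is the hardest part of the proof.
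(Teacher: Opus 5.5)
Your Step 2 is the paper's proof, and it proves the lemma on its own. The ingredients match: Blasiak's theorem that $\mathcal D_{\Psi,\mu}\otimes\{u_{\mu_1\Lambda_0}\}$ is a disjoint union of Demazure crystals in $B^\mu\otimes B(\mu_1\Lambda_0)$; the computation $\varepsilon_0(b\otimes u_{\mu_1\Lambda_0})=\varepsilon_0(b)-\mu_1>0$ and $\widetilde f_0(b\otimes u_{\mu_1\Lambda_0})=\widetilde f_0b\otimes u_{\mu_1\Lambda_0}$, which is your ``truncation'' picture; and Kashiwara's string property.

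Step 1 and the fallback should be dropped, since Step 2 does not use them and neither is sound as written. Step 1 only treats $Z_{\Psi,\mu}$, not $\mathcal D_{\Psi,\mu}$. It also misreads the order: $\F_{s_{\ell-1}\cdots s_{d_1}}=\F_{\ell-1}\circ\cdots\circ\F_{d_1}$ applies $\F_{\ell-1}$ last, not first. In any case, closure under $\widetilde f_{\ell-1}$ only makes each $(\ell-1)$-string meet the set in an $\widetilde f_{\ell-1}$-closed tail, not a full string. The fallback's ``standard lemma'' does not hold for arbitrary subsets of $B^\mu$, which is exactly why the Demazure embedding is needed.
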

\begin{proof}
For $b \in B^\mu$ with $\varepsilon_0(b) > a \geq \mu_1$ and $\widetilde f_0b \neq 0$, we have
\[
	\varepsilon_0(b \otimes u_{\mu_1\Lambda_0}) = \varepsilon_0(b) - \mu_1 > 0,
	\qquad
	\widetilde f_0(b \otimes u_{\mu_1\Lambda_0}) = \widetilde f_0b \otimes u_{\mu_1\Lambda_0} \neq 0.
\]
By~\cite[Theorem~2.2 and Corollary~3.11]{Bla2020}, $\mathcal D_{\Psi, \mu} \otimes \{u_{\mu_1\Lambda_0}\}$ is a disjoint union of Demazure crystals in $B^\mu \otimes B(\mu_1\Lambda_0)$.
The intersection of a Demazure crystal with a $0$-string $S$ is $\emptyset$, $\{u_S\}$, or $S$~\cite[Proposition~3.3.5]{Kas1993}.
Therefore, for every $0$-string $S$ in $B^\mu \otimes B(\mu_1\Lambda_0)$, we have
\begin{equation*}
	\bigl(\mathcal D_{\Psi, \mu} \otimes \{u_{\mu_1\Lambda_0}\}\bigr) \cap S
	\in \bigl\{\emptyset, \{u_S\}, S\bigr\}.
\end{equation*}
Both tensors lie in $S \setminus \{u_S\}$ for the same $0$-string $S$, so \eqref{eq:zero-string-condition} holds for $a$.
\end{proof}

For a partition $\mu = (\mu_1, \ldots, \mu_p)$ with $1 \leq p < \ell$, let $\tau = \tau_\ell$ and $B^\mu = B^{1, \mu_p} \otimes \cdots \otimes B^{1, \mu_1}$.
For $b \in B^\mu$ and $L > \mu_1$, let
\begin{equation}\label{eq:carrier-body}
	R(\tau b \otimes 1^L) = y' \otimes v(b),
	\qquad y' \in B^{1, L}, \quad v(b) \in B^\mu.
\end{equation}

\begin{prop}\label{prop:strict-carrier}
We assume that $Y \subseteq B^\mu$ is a union of classical components and satisfies \eqref{eq:zero-string-condition} with $a = \mu_1$.
Then $b \in Y$ if and only if $v(b) \in Y$.
\end{prop}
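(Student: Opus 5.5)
The plan is to show that $b$ and $v(b)$ are joined by a path in $B^\mu$ of two kinds of steps: classical steps $\widetilde e_i^{\pm1}$ with $1 \leq i < \ell$, which preserve membership in $Y$ because $Y$ is a union of classical components, and $0$-steps taken only at elements with $\varepsilon_0 > \mu_1$, which preserve membership in $Y$ by \eqref{eq:zero-string-condition} with $a = \mu_1$. Two tools drive everything. The first is \eqref{eq:tau-epsilon}, which converts $\widetilde e_i, \widetilde f_i$ on $b$ into $\widetilde e_{i+1}, \widetilde f_{i+1}$ on $\tau b$. The second is the fact that $R$ is an affine crystal isomorphism, so it commutes with every $\widetilde e_j, \widetilde f_j$ for $j \in \mathbb Z/\ell\mathbb Z$.

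\emph{Step 1: where the operators act.} The factor $1^L$ has $\varepsilon_j = \varphi_j = 0$ for $j \neq 0, 1$, together with $\varphi_1(1^L) = L$ and $\varepsilon_0(1^L) = L$. Hence in $\tau b \otimes 1^L$, every $\widetilde e_j, \widetilde f_j$ with $2 \leq j \leq \ell-1$ acts on the factor $\tau b$. Since $p < \ell$ and the weight of $\tau b$ is supported on the letters $2, \ldots, \ell$ together with $1$ (the image of $\ell$), I will use the hypothesis $L > \mu_1 \geq$ every row length to show that the $1$- and $0$-signatures are dominated by $1^L$ in a controlled way.

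On the right-hand side $y' \otimes v(b)$, I will argue that $y'$ is essentially rigid. Its $1$-letters number at least $L - \mu_1 > 0$, so for the relevant indices the operators pass to $v(b)$. Combining the two sides should give
\[
	v(\widetilde e_i b) = \widetilde e_i\, v(b) \qquad (1 \leq i \leq \ell-2),
\]
up to the bookkeeping of whether $y'$ absorbs the operator. For the remaining indices $i = \ell-1$ and $i = 0$ (the latter transported from index $1$), the step may be absorbed by $y'$.

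\emph{Step 2: reduction to highest elements.} Using Step 1, I will reduce to $b$ that is highest for $\widetilde e_1, \ldots, \widetilde e_{\ell-2}$. Since classical steps in $b$ correspond to classical steps in $v(b)$ and $Y$ is closed under them, it suffices to handle these extremal $b$.

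\emph{Step 3: the main obstacle.} For such extremal $b$, I must show that $v(b)$ is reached from a classical translate of $b$ by $\widetilde e_0$ or $\widetilde f_0$ steps all occurring where $\varepsilon_0 > \mu_1$. I would first compute $R(\tau b \otimes 1^L)$ explicitly by the NY rule or by the bumping description of $R$ for single rows. Because $L > \mu_1$, each swap of $1^L$ past a row $x$ of $\tau b$ should move the letters $1$ of $x$ to the right-hand factor. The upshot should be that $v(b)$ equals $\tau^{-1}$ of $\tau b$ after all its letters $1$ are turned back into $\ell$'s, adjusted by a $0$-string move.

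I then need to check that along this $0$-string every intermediate element has $\varepsilon_0 > \mu_1$. This is exactly where the strict inequality $L > \mu_1$ and the tensor rule \eqref{eq:tensor-epsilon} enter: the contribution $L$ from $\varepsilon_0(1^L)$ transports through $R$ into $\varepsilon_0$ of the relevant element. I expect this verification, including pinning down the exact path, to be the hardest part. If the direct computation gets messy, I would instead argue by induction on $p$, peeling off the rightmost factor $B^{1,\mu_1}$ and using \eqref{eq:tensor-epsilon} and \eqref{eq:tensor-phi} to track $\varepsilon_0$.
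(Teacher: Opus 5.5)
You correctly identify the two kinds of admissible moves, but your Step~3, where the whole content of the proposition lies, is left as an expected computation, and the heuristics you give for it are wrong. Step~1 already contains two errors. By \eqref{eq:tau-epsilon}, $\tau(\widetilde e_i b)\otimes 1^L=\widetilde e_{i+1}(\tau b\otimes 1^L)$ for $1\le i<\ell$ (indices modulo $\ell$). So $R$ gives $v(\widetilde e_i b)\in\{v(b),\widetilde e_{i+1}v(b)\}$, not $\widetilde e_i v(b)$. Also, $y'$ need not contain any letter $1$.

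With the correct shift, $\widetilde e_{\ell-1}$-steps on $b$ are harmless. They induce $v(g)=\widetilde f_0 v(g')$ with $\varepsilon_0(v(g'))=L+\varepsilon_0(\tau g')>\mu_1$, so you can reduce to classically highest $b$. That reduction does not finish the job, however. Take $\ell=3$, $\mu=(2,2)$, $L=3$, $b=11\otimes 11$. Then $R(22\otimes 22\otimes 111)=222\otimes 12\otimes 11$, so $y'=222$ and $v(b)=12\otimes 11$. This element is classically highest of weight $(3,1,0)$, so it lies in a different classical component from $b$. Here $\tau b$ contains no letter $1$, so your picture of moving the letters $1$ of each row cannot explain the change. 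What actually happens is that $\varepsilon_0(b)=4>L$ and the carrier $1^L$ runs out of $1$'s.

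The missing idea is how to control $0$-steps on the $b$ side. A step $g\mapsto\widetilde e_0 g$ corresponds to $\widetilde e_1$ on $\tau g$. It acts on the left factor of $\tau g\otimes 1^L$ only when $\varepsilon_0(g)>L=\varphi_1(1^L)$. So the bound $\varepsilon_0>\mu_1$ you aim for suffices for \eqref{eq:zero-string-condition}, but not for transporting the step through $R$ to $v$.

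The paper gets a path with exactly the right property by tensoring with the highest element $u_{L\Lambda_0}$ of $B(L\Lambda_0)$, which satisfies $\varphi_i(u_{L\Lambda_0})=L\delta_{i0}=\varphi_{i+1}(1^L)$. The argument then runs as follows.
\begin{enumerate}
\item Raising $b\otimes u_{L\Lambda_0}$ to its affine highest element $\bar b\otimes u_{L\Lambda_0}$ uses only steps $g\mapsto\widetilde e_i g$ with $\varepsilon_i(g)>L\delta_{i0}$.
\item Each such step lifts to $\widetilde e_{i+1}$ on the left factor of $\tau g\otimes 1^L$, and hence passes through $R$, possibly being absorbed by $y'$.
\item The $0$-steps on $g$ occur where $\varepsilon_0>L>\mu_1$, so \eqref{eq:zero-string-condition} applies to them. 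The induced $0$-steps on $v(g)$ are handled as described above.
\item The endpoint $\bar b$ is classically highest with $\varepsilon_0(\bar b)\le L$. This is the only case that needs an explicit $R$-computation, and there the Nakayashiki--Yamada pairing rule gives $v(\bar b)=\bar b$.
\end{enumerate}
In your example, $\bar b=\widetilde e_2\widetilde e_0 b=12\otimes 11=v(b)$. Without some device that guarantees both termination and a computable endpoint, Step~3 remains an open-ended case analysis rather than a proof.
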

\begin{proof}
We may assume that $\mu_p > 0$.
We first prove that $v(b) = b$ when $b$ is classically highest and $\varepsilon_0(b) \leq L$.
For $b = x_p \otimes \cdots \otimes x_1$, we proceed by induction on $j$, where $1 \leq j \leq p$.
Let $x = x_j$ and $z = x_{j - 1} \otimes \cdots \otimes x_1$.
We assume inductively that $R(\tau z \otimes 1^L) = y \otimes z$ with $y \in B^{1, L}$.
For $j = 1$, $z$ is the empty tensor product and $y = 1^L$.

We have $\wt(y) = L\evec_1 + \wt(\tau z) - \wt(z)$.
Since $x \otimes z$ is classically highest, we have $\varepsilon_{i - 1}(z) = 0$ and $\wt_i(x) = \varepsilon_{i - 1}(x) \leq \varphi_{i - 1}(z)$ for $2 \leq i \leq \ell$.
Since $\wt_i(b) = 0$ for $p < i \leq \ell$, $\tau$ adds $1$ to every letter of $b$.
By \eqref{eq:tau-epsilon},
\[
	\wt_1(b) = \wt_2(\tau b) = \varepsilon_1(\tau b) = \varepsilon_0(b) \leq L.
\]
Together with $\wt_1(x) + \wt_1(z) \leq \wt_1(b)$, we obtain 
\[
\begin{aligned}
	\wt_i(x) &\leq \wt_{i - 1}(z) - \wt_i(z) = \wt_i(y)
		\qquad(2 \leq i \leq \ell), \\
	\wt_1(x) &\leq L - \wt_1(z) = \wt_1(y).
\end{aligned}
\]
Since $|\tau x| \leq \mu_1 < L = |y|$ and $\wt_{t - 1}(y) \geq \wt_{t - 1}(x) = \wt_t(\tau x)$, we pair each letter $t$ of $\tau x$, from largest to smallest, with an unused $t - 1$ in $y$.
By the pairing rule~\cite[Rule~3.11 and Proposition~3.21]{NakYam1997}, we obtain $R(\tau x \otimes y) = y' \otimes x$.
Therefore,
\[
	R(\tau(x \otimes z) \otimes 1^L)
	= R(\tau x \otimes y) \otimes z
	= y' \otimes x \otimes z.
\]
When $j = p$, we obtain $v(b) = b$.

Now let $b \in B^\mu$ be arbitrary.
The connected component of $b \otimes u_{L\Lambda_0}$ has affine highest element $\bar b \otimes u_{L\Lambda_0}$~\cite[Proposition~8.1]{SchTin2012}.
Then $\bar b$ is classically highest and $\varepsilon_0(\bar b) \leq L$, so $v(\bar b) = \bar b$.
We choose a sequence of $\widetilde e_i$ from $b \otimes u_{L\Lambda_0}$ to $\bar b \otimes u_{L\Lambda_0}$ and denote an element of this sequence by $g \otimes u_{L\Lambda_0}$.

For a step $\widetilde e_i$ in this sequence, we set $g' = \widetilde e_i g$.
Then $\varepsilon_i(g) > L\delta_{i0}$.
We have $\varepsilon_{i + 1}(\tau g) = \varepsilon_i(g) > L\delta_{i0} = \varphi_{i + 1}(1^L)$, with indices modulo $\ell$.
Hence
\[
	\tau g' \otimes 1^L = \widetilde e_{i + 1}(\tau g \otimes 1^L),
	\qquad v(g') \in \{v(g), \widetilde e_{i + 1}v(g)\}.
\]
For $1 \leq i < \ell - 1$, the elements $v(g)$ and $v(g')$ lie in the same classical component.
If $i = \ell - 1$ and $v(g) \neq v(g')$, we write $R(\tau g' \otimes 1^L) = y' \otimes v(g')$.
Then $v(g) = \widetilde f_0v(g')$ and $\varphi_0(v(g')) > \varepsilon_0(y')$, so
\[
	\varepsilon_0(v(g'))
	= \varepsilon_0(\tau g' \otimes 1^L)
	= L + \varepsilon_0(\tau g') > \mu_1.
\]
We apply \eqref{eq:zero-string-condition} to $v(g')$ and conclude that $v(g') \in Y$ if and only if $v(g) \in Y$.
For $i = 0$, the elements $v(g)$ and $v(g')$ lie in the same classical component.
Since $\varepsilon_0(g') = \varepsilon_0(g) - 1 \geq L > \mu_1$ and $\widetilde f_0g' = g$, we apply \eqref{eq:zero-string-condition} to $g'$ and conclude that $g' \in Y$ if and only if $g \in Y$.

Along the sequence, we obtain $b \in Y$ if and only if $\bar b \in Y$, and $v(b) \in Y$ if and only if $v(\bar b) \in Y$.
Since $v(\bar b) = \bar b$, we conclude that $b \in Y$ if and only if $v(b) \in Y$.
\end{proof}

\subsection{Strictly decreasing content}\label{subsec:strict-transport}

For strictly decreasing content, we characterize katabolizability using both smaller tableaux and prove preservation under $\Phi_{n, k}$ by induction on $n$.

For the remainder of this paper, $\Psi \subseteq \Delta_n^+$ is an upper root ideal with $\dvec = \dvec(\Psi) = (d_1, \ldots, d_{n - 1})$, and $T$ is a tableau with entries in $[n]$, partition content $\mu = (\mu_1, \ldots, \mu_n)$, and width at most $k$.
For $1 \leq m \leq n$, we construct a tableau with content $(\mu_{n - m + 1}, \ldots, \mu_n)$ by
\[
	\widehat T_m = w_0^{(m)} \cdot \bigl((w_0 \cdot T)^{\leq m}\bigr).
\]
The width of $\widehat T_m$ is at most $k$.
We set
\[
	\Psi_{> n - m} = \{(i, j) \in \Delta_m^+ : (n - m + i, n - m + j) \in \Psi\}.
\]
The set $\Psi_{> n - m}$ is an upper root ideal in $\Delta_m^+$ with $\dvec(\Psi_{> n - m}) = (d_{n - m + 1}, \ldots, d_{n - 1})$.

Since $R_{w_0^{(p + 1)}}(b \otimes y) = y' \otimes R_{w_0^{(p)}}v$,
\eqref{eq:R-recording} and \eqref{eq:recording-prefix} give
\begin{equation}\label{eq:R-tail-recording}
	Q(v) = w_0^{(p)} \cdot \bigl((w_0^{(p + 1)} \cdot Q(b \otimes y))^{\leq p}\bigr).
\end{equation}

\begin{lem}\label{lem:strict-suffix}
For $n \geq 2$ and $\mu_1 > \mu_2$, if $T$ is $\Psi$-katabolizable, then $\widehat T_{n - 1}$ is $\Psi_{> 1}$-katabolizable.
\end{lem}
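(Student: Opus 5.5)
The plan is to work on the crystal side. We encode $T$ by $\inv(T)$, extract from it an element of the DARK crystal for the smaller root ideal $\Psi_{>1}$, and transport that element along the combinatorial $R$-matrix using \cref{prop:strict-carrier}. The hypothesis $\mu_1>\mu_2$ is what lets us take $L=\mu_1$ there. Let $\mu'=(\mu_2,\ldots,\mu_n)$, and use alphabet $[\ell]$ with $\ell=n$ throughout.

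\emph{Step 1: splitting off the first factor.} Since $T$ is $\Psi$-katabolizable, it satisfies the first-row condition, so $b_1(T)=1^{\mu_1}$. Hence
\[
	\inv(T)=b'\otimes 1^{\mu_1},\qquad b'=b_n(T)\otimes\cdots\otimes b_2(T).
\]
Set $c=\tau^{-1}b'\in B^{\mu'}$. By \eqref{eq:kat-crystal},
\[
	\inv(A^{(1)})=\widetilde e_{d_1}^{\max}\cdots\widetilde e_{n-1}^{\max}c ,
\]
where $A^{(1)}=F_{d_1,n}T$. The remaining katabolism sequence of $A^{(1)}$ uses $(d_2,\ldots,d_{n-1})=\dvec(\Psi_{>1})$, and all its terms satisfy the first-row condition. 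By \eqref{eq:RSK-tabloid}, $Q(\inv A^{(1)})=P(\rw A^{(1)})$. Using \cite[Proposition~6.15]{BlaMorPun2024}, which allows katabolizability to be tested on the tabloid sequence, I will conclude that this tableau is $\Psi_{>1}$-katabolizable. \Cref{prop:BMP-components} then places $\inv(A^{(1)})$ in $\mathcal D_{\Psi_{>1},\mu'}$ (with $p=n-1$ and alphabet $[n]$). That set is a union of classical components, so it is closed under classical $\widetilde f_i$. Since $c$ is obtained from $\inv(A^{(1)})$ by undoing the raising operators, this gives $c\in Y:=\mathcal D_{\Psi_{>1},\mu'}$.

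\emph{Step 2: the carrier.} Apply \eqref{eq:carrier-body} with $L=\mu_1>\mu_2=\max\mu'$, so that
\[
	R(\tau c\otimes 1^{\mu_1})=R(\inv(T))=y'\otimes v(c).
\]
By \cref{lem:dark-zero} (for $n-1\ge2$; the case $n=2$ is trivial since $\Psi_{>1}=\emptyset$ and $\widehat T_1$ is a single row), $Y$ satisfies \eqref{eq:zero-string-condition} with $a=\mu_2$. Together with Step~1, \cref{prop:strict-carrier} yields $v(c)\in Y$. By \eqref{eq:R-tail-recording} with $p=n-1$,
\[
	Q(v(c))=w_0^{(n-1)}\cdot\bigl((w_0\cdot Q(\inv T))^{\le n-1}\bigr)=w_0^{(n-1)}\cdot\bigl((w_0\cdot T)^{\le n-1}\bigr)=\widehat T_{n-1},
\]
using $Q(\inv T)=T$.

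\emph{Step 3: conclusion.} By \cref{prop:BMP-components}, $Y$ is the disjoint union of the components $\mathcal C_S$ over $\Psi_{>1}$-katabolizable $S$. Since $v(c)\in Y$ and $v(c)\in\mathcal C_{Q(v(c))}$, the tableau $Q(v(c))=\widehat T_{n-1}$ is $\Psi_{>1}$-katabolizable.

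The main obstacle is Step~1, namely justifying that $P(\rw A^{(1)})$ is $\Psi_{>1}$-katabolizable when $A^{(1)}$ is only a tabloid (\cref{ex:katabolism} shows it need not be semistandard). The first fallback is to invoke the tabloid form of \cite[Proposition~6.15]{BlaMorPun2024} directly. Failing that, I would argue on the crystal side. The point $\tau c\otimes 1^{\mu_1}=\inv(T)$ is a classically highest element of $\mathcal D_{\Psi,\mu}$. Using the Demazure structure \cite[Theorem~3.7]{Bla2020}, I would show that it lies in $Z_{\Psi,\mu}=\tau(X_2^\Psi(\mu))\otimes\{1^{\mu_1}\}$. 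Then $c\in X_2^\Psi(\mu)\subseteq\F_{w_0}\bigl(\tau X_3\otimes\{1^{\mu_2}\}\bigr)=Y$, because $s_{n-1}\cdots s_{d_1}$ extends to a reduced word of $w_0$.
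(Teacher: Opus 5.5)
The main line of your Step 1 has a gap. Your Steps 2 and 3 are exactly the paper's argument, with your $c$ playing the role of the paper's $x$.

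As used in \cref{def:katabolizable}, \cite[Proposition~6.15]{BlaMorPun2024} identifies katabolizability of a \emph{tableau} with the first-row conditions along the sequence that starts at that tableau. Your sequence $A^{(1)}, \ldots, A^{(n-1)}$ starts at the tabloid $A^{(1)}$, which is in general not semistandard (\cref{ex:katabolism}). It is therefore not the katabolism sequence of $P(A^{(1)})$. The first-row conditions along a tabloid's sequence are not determined by its insertion tableau: $\langle 1; 12\rangle$ and $\langle 11; 2\rangle$ both insert to $Y_{(2,1)}$, but only the second satisfies the first-row condition. So the citation does not give that $P(A^{(1)})$ is $\Psi_{>1}$-katabolizable.

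Your fallback closes the gap and is precisely the paper's proof. The element $\inv(T) \in \mathcal C_T \subseteq \mathcal D_{\Psi,\mu}$ is classically highest, so it lies in $Z_{\Psi,\mu}$, and hence $c \in X_2^\Psi(\mu) \subseteq Y$.

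You do not need Demazure theory for the middle step. If $y = \widetilde f_i^{\,a} b \neq 0$ with $a \geq 1$, then $\widetilde e_i y \neq 0$. So a classically highest element of $\F_i(X)$ already lies in $X$, and iterating over a reduced word of $w_0$ gives $\inv(T) \in Z_{\Psi,\mu}$.

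Alternatively, you can repair your primary route without passing to $P(A^{(1)})$. Write $\inv(A^{(t)}) = \tau x^{(t)} \otimes 1^{\mu_{t+1}}$. By \eqref{eq:kat-crystal}, $x^{(t)}$ is recovered from $\inv(A^{(t+1)})$ by classical lowering operators, so $x^{(t)} \in \F_{s_{n-1}\cdots s_{d_{t+1}}}\{\inv(A^{(t+1)})\}$. Backward induction starting from $\inv(A^{(n-1)}) = 1^{\mu_n}$ then gives $c = x^{(0)} \in X_2^\Psi(\mu)$ directly. With either fix, your proof coincides with the paper's.
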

\begin{proof}
The case $n = 2$ follows from $\Psi_{> 1} = \emptyset$ and $\widehat T_1 = Y_{(\mu_2)}$, so we assume that $n \geq 3$ and use alphabet $[n]$.
Since $\inv(T)$ is classically highest in $\mathcal D_{\Psi, \mu}$, it belongs to $Z_{\Psi, \mu}$.
Thus $\inv(T) = \tau x \otimes 1^{\mu_1}$ for some $x \in X_2^\Psi(\mu)$.
By \cref{prop:BMP-components,lem:dark-zero}, the set $Y = \mathcal D_{\Psi_{> 1}, (\mu_2, \ldots, \mu_n)}$ is a union of classical components and satisfies \eqref{eq:zero-string-condition} with $a = \mu_2$.
The recursion \eqref{eq:root-tail-stages} gives
\[
	X_2^\Psi(\mu) \subseteq
	\F_{w_0}\bigl(\tau(X_3^\Psi(\mu)) \otimes \{1^{\mu_2}\}\bigr) = Y.
\]
By \cref{prop:strict-carrier} with $L = \mu_1 > \mu_2$, we have $R(\tau x \otimes 1^{\mu_1}) = y' \otimes v(x)$ with $v(x) \in Y$.
By \eqref{eq:R-tail-recording}, we have
\[
\begin{aligned}
	Q(v(x))
	&= w_0^{(n - 1)} \cdot
		\bigl((w_0 \cdot Q(\tau x \otimes 1^{\mu_1}))^{\leq n - 1}\bigr)\\
	&= w_0^{(n - 1)} \cdot
		\bigl((w_0 \cdot T)^{\leq n - 1}\bigr)
	= \widehat T_{n - 1}.
\end{aligned}
\]
Since $v(x) \in Y$, $\widehat T_{n - 1}$ is $\Psi_{> 1}$-katabolizable.
\end{proof}

\begin{lem}\label{lem:strict-flags}
We assume that $A$ is an $\ell$-row tabloid with entries in $[m]$, has partition content $\mu$ with $\mu_1 > \mu_2$, and satisfies the first-row condition.
For $2 \leq r < m \leq \ell$ and $1 \leq d \leq \ell$, we set $\mu' = (\mu_{m - r + 1}, \ldots, \mu_m)$.
The following conditions are equivalent:
\begin{enumerate}
    \item $(w_0^{(m)} \cdot P(A))^{\leq r} = w_0^{(r)} \cdot Y_{\mu'}$;
    \item $(w_0^{(m - 1)} \cdot P(F_{d, \ell}A))^{\leq r} = w_0^{(r)} \cdot Y_{\mu'}$.
\end{enumerate}
\end{lem}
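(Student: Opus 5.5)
The plan is to translate both conditions into statements about recording tableaux of row-crystal elements, using the $R$-matrix. Then \cref{prop:strict-carrier} shows that pushing the factor $1^{\mu_1}$ through does not affect membership in the relevant set.

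\emph{Step 1 (reformulation of (1)).} Set $b = \inv(A) \in B^\mu$. By \eqref{eq:RSK-tabloid}, $P(A) = Q(b)$. Since $A$ satisfies the first-row condition, $b = b' \otimes 1^{\mu_1}$ with $b' = b_m(A) \otimes \cdots \otimes b_2(A)$. Applying $w_0^{(r)}$, condition (1) says $w_0^{(r)} \cdot \bigl((w_0^{(m)} \cdot Q(b))^{\leq r}\bigr) = Y_{\mu'}$.

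Iterating \eqref{eq:R-tail-recording}, I would express the left-hand side as $Q(v_r(b))$, where $v_r(b)$ is the element of $B^{\mu'}$ left after the $R$-matrix moves the rightmost $m - r$ factors $b_1, \ldots, b_{m - r}$ to the left. To do this, I would write $R_{w_0^{(m)}}$ as a product of ``move one factor to the far left'' steps and check that the tails compose as claimed. Since $\mathcal C_{Y_{\mu'}} = \{c : Q(c) = Y_{\mu'}\}$, condition (1) becomes $v_r(b) \in Y$, where
\[
	Y = \mathcal C_{Y_{\mu'}} = \mathcal D_{\emptyset, \mu'}.
\]
The last equality uses \cref{prop:BMP-components} together with the remark after \cref{lem:top-rows} that $Y_{\mu'}$ is the only $\emptyset$-katabolizable tableau of content $\mu'$.

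\emph{Step 2 (reformulation of (2)).} By \eqref{eq:kat-crystal}, $\inv(F_{d, \ell}A) = \widetilde e_d^{\max}\cdots\widetilde e_{\ell - 1}^{\max}\tau^{-1}b'$. Classical operators fix $Q$ and commute with the combinatorial $R$-matrix, so they also fix the recording tableau of every $R$-tail. Hence condition (2) is equivalent to $v'_r(\tau^{-1}b') \in Y$, where $v'_r$ moves the rightmost $m - r - 1$ factors of $\tau^{-1}b'$ to the left. The alphabet $[m - 1]$ and \eqref{eq:recording-prefix} handle the relabelling here.

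Since $R$ is an affine crystal isomorphism, it commutes with $\tau$. Also $Y$ is $\tau$-stable up to the relevant relabelling, because $\mathcal C_{Y_{\mu'}}$ is defined through $Q$ and $Q(\tau c) = Q(c)$ on connected components. Therefore condition (2) is equivalent to $v'_r(b') \in Y$, with the twist $\tau$ tracked on the moved factors.

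\emph{Step 3 (comparison).} Consider computing $v_r(b)$ by first moving $1^{\mu_1}$ all the way left, and then moving $b_2, \ldots, b_{m - r}$. Up to the $\tau$ bookkeeping, the first move is exactly the carrier map \eqref{eq:carrier-body} with $L = \mu_1$ applied to the string $b'$. The hypothesis $\mu_1 > \mu_2 \geq$ every part of the relevant content gives $L > \mu_1'$. By \cref{lem:dark-zero} (or directly for $\emptyset$ when $r \geq 2$), the set $Y$, or its preimage under the remaining tail maps, satisfies \eqref{eq:zero-string-condition}. Then \cref{prop:strict-carrier} shows that moving $1^{\mu_1}$ across does not change membership in $Y$. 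This should yield (1) $\iff$ (2).

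\emph{Main obstacle.} The delicate point is applying \cref{prop:strict-carrier} to the correct set. It concerns $v(b)$ for $b$ in the full $B^{(\mu_2, \ldots, \mu_m)}$, whereas $Y$ lives on the tail $B^{\mu'}$. So I would apply it to the preimage
\[
	\widetilde Y = \{c \in B^{(\mu_2, \ldots, \mu_m)} : v'_r(c) \in Y\},
\]
and verify two things. First, $\widetilde Y$ is a union of classical components; this holds because $R$ commutes with classical operators. Second, $\widetilde Y$ satisfies \eqref{eq:zero-string-condition} with $a = \mu_2$. For the second I would try to identify $\widetilde Y$ as a DARK set $\mathcal D_{\Psi', (\mu_2, \ldots, \mu_m)}$ for a suitable root ideal, presumably the one whose last $r$ vertices are $\emptyset$-type and which is full elsewhere. \Cref{lem:dark-zero} then applies. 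If that identification fails, I would instead redo the $0$-string argument of \cref{lem:dark-zero} using that the $R$-matrix is an affine isomorphism, so it maps $0$-strings to $0$-strings.
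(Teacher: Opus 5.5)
Your outline is the paper's argument. By iterating \eqref{eq:R-tail-recording}, your $\widetilde Y=\{c : v'_r(c)\in\mathcal C_{Y_{\mu'}}\}$ is exactly the paper's set $\mathcal L_r=\{c : (w_0^{(m-1)}\cdot Q(c))^{\leq r}=w_0^{(r)}\cdot Y_{\mu'}\}$ in $B^{(\mu_2,\ldots,\mu_m)}$. Conditions (1) and (2) become $v(x)\in\widetilde Y$ and $x\in\widetilde Y$, and \cref{prop:strict-carrier} is applied on $B^{(\mu_2,\ldots,\mu_m)}$ with $L=\mu_1>\mu_2$ and $a=\mu_2$.

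\textbf{The $\tau$-stability paragraph in Step 2 is false and must be removed.} The map $\tau$ is not a classical operation and does not preserve $Q$. Take $\ell=m=3$, $r=2$ and $A=\langle 112;3;\emptyset\rangle$. Then $b'=2\otimes 1$ has recording tableau $Y_{(1,1)}$, while $\tau^{-1}b'=1\otimes 3$ has a one-row recording tableau. Condition (2) indeed fails for this $A$, so your conclusion ``(2) iff $v'_r(b')\in Y$'' is wrong.

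The claim is also unnecessary. Put $x=\tau^{-1}b'$, so that $\inv(A)=\tau x\otimes 1^{\mu_1}$ is literally the input of \eqref{eq:carrier-body}. The first half of your Step 2 gives (2) $\iff x\in\widetilde Y$. Moving $1^{\mu_1}$ first in Step 1 gives (1) $\iff v(x)\in\widetilde Y$. No further $\tau$ bookkeeping is needed.

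\textbf{For the main obstacle, use your second option, which is what the paper does.} Identifying $\widetilde Y$ with a DARK set would need its own proof and is not required.

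The composite of $R$-matrices $c\mapsto(\text{moved factors})\otimes v'_r(c)$ is an affine crystal isomorphism carrying $\widetilde Y$ onto $B''\otimes\mathcal C_{Y_{\mu'}}$. The paper uses $R_{w_0^{(m-1)}}$ instead and gets $B'\otimes R_{w_0^{(r)}}\mathcal C_{Y_{\mu'}}$.

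Condition \eqref{eq:zero-string-condition} is transported by affine isomorphisms, and it survives tensoring on the left:
\begin{itemize}
\item If $\widetilde f_0$ acts on the right factor of $z\otimes y$, then $\varepsilon_0(z)<\varphi_0(y)$, so $\varepsilon_0(z\otimes y)=\varepsilon_0(y)$ by \eqref{eq:tensor-epsilon}, and the condition for $y$ applies.
\item If $\widetilde f_0$ acts on $z$, membership is unchanged.
\end{itemize}

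Finally, $\mathcal C_{Y_{\mu'}}=\mathcal D_{\emptyset,\mu'}$ satisfies the condition for $a=\mu_2\geq\mu'_1$ by \cref{lem:dark-zero}; this is where $r\geq 2$ enters. No Demazure-crystal argument has to be redone.
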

\begin{proof}
We consider
\[
	\mathcal L_r = \{b \in B^{(\mu_2, \ldots, \mu_m)} :
		(w_0^{(m - 1)} \cdot Q(b))^{\leq r} = w_0^{(r)} \cdot Y_{\mu'}\}.
\]
Since classical operators preserve $Q$, the set $\mathcal L_r$ is a union of classical components.
For $b \in B^{(\mu_2, \ldots, \mu_m)}$, the tableau $(w_0^{(m - 1)} \cdot Q(b))^{\leq r}$ is the recording tableau of the rightmost $r$ tensor factors of $R_{w_0^{(m - 1)}}b$.
Hence
\[
	R_{w_0^{(m - 1)}}\mathcal L_r
	= B' \otimes R_{w_0^{(r)}}\mathcal C_{Y_{\mu'}},
\]
where $B' = B^{1, \mu_2} \otimes \cdots \otimes B^{1, \mu_{m - r}}$, an empty tensor product when $r = m - 1$.

By \cref{lem:dark-zero}, the set $\mathcal C_{Y_{\mu'}} = \mathcal D_{\emptyset, \mu'}$ satisfies \eqref{eq:zero-string-condition} with $a = \mu_2$, since $\mu'_1 \leq \mu_2$.
Condition \eqref{eq:zero-string-condition} is preserved under affine crystal isomorphisms and tensoring on the left.
Therefore, $R_{w_0^{(m - 1)}}\mathcal L_r$ satisfies \eqref{eq:zero-string-condition} with $a = \mu_2$, and so does $\mathcal L_r$.

Since $A$ satisfies the first-row condition, $\inv(A) = \tau x \otimes 1^{\mu_1}$ for some $x \in B^{(\mu_2, \ldots, \mu_m)}$.
Let $R(\tau x \otimes 1^{\mu_1}) = y' \otimes v(x)$.
We have $Q(\inv(A)) = P(A)$ and $\tau^{-1}\pi\inv(A) = x$.
Therefore, with \eqref{eq:R-tail-recording}, we have
\[
	(w_0^{(m)} \cdot P(A))^{\leq m - 1}
	= w_0^{(m - 1)} \cdot Q(v(x))
\]
and
\[
	P(F_{d, \ell}A)
	= Q(\widetilde e_d^{\max}\cdots\widetilde e_{\ell - 1}^{\max}x)
	= Q(x).
\]
Note that conditions (1) and (2) are equivalent to $v(x) \in \mathcal L_r$ and $x \in \mathcal L_r$, respectively.
We apply \cref{prop:strict-carrier} with $L = \mu_1 > \mu_2$ and conclude that conditions (1) and (2) are equivalent.
\end{proof}

Let $A^{(0)}, \ldots, A^{(n - 1)}$ be the $\Psi$-katabolism sequence of $T$.
For $n \geq 3$, if $T^{\leq n - 1}$ is $(\Psi \cap \Delta_{n - 1}^+)$-katabolizable, then $A^{(1)}, \ldots, A^{(n - 2)}$ satisfy the first-row condition by \cref{lem:prefix-reduction}.
When $\Psi \neq \emptyset$ and $\mu_1 > \cdots > \mu_n$, we use the $\Psi_{> 1}$-katabolizability of $\widehat T_{n - 1}$ to show that $A^{(n - 1)}$ satisfies the first-row condition.
\begin{lem}\label{lem:trace-highest}
For $1 \leq t < n$, if $A^{(u)}$ satisfies the first-row condition for $0 \leq u < t$, then
\[
	\varepsilon_i\bigl(\inv(A^{(t)})\bigr) = 0
	\qquad(d_t \leq i < n).
\]
\end{lem}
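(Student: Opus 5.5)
The plan is to prove the statement by induction on $t$, tracking the vanishing of $\varepsilon_i$ through the crystal description \eqref{eq:kat-crystal} of one katabolism step:
\[
	\inv(A^{(t)})
	= \widetilde e_{d_t}^{\max}\cdots\widetilde e_{n - 1}^{\max}\,c,
	\qquad c = \tau_n^{-1}\pi\inv(A^{(t - 1)}).
\]
It suffices to show that $c$ is already $\widetilde e_i$-highest for $d_t \leq i \leq n - 2$. Granting that, the remaining step shows that the string of maximal raisings $\widetilde e_{n - 1}^{\max}, \ldots, \widetilde e_{d_t}^{\max}$ produces an element that is highest for the whole Levi range $d_t \leq i < n$.

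\emph{Step 1: transport through $\pi$ and $\tau^{-1}$.} The hypothesis is used through the first-row condition for $A^{(t - 1)}$. It gives $\inv(A^{(t - 1)}) = x \otimes 1^{a}$ with $a = |b_1(A^{(t-1)})|$, where the rightmost factor $1^a$ is removed by $\pi$. For $i \geq 2$ we have $\varepsilon_i(1^a) = \varphi_i(1^a) = 0$, so \eqref{eq:tensor-epsilon} gives $\varepsilon_i(x \otimes 1^a) = \varepsilon_i(x)$. By \eqref{eq:tau-epsilon}, $\varepsilon_i(\tau^{-1}x) = \varepsilon_{i + 1}(x)$. Hence
\[
	\varepsilon_i(c) = \varepsilon_{i + 1}\bigl(\inv(A^{(t - 1)})\bigr)
	\qquad(1 \leq i \leq n - 2).
\]
For $t \geq 2$, the induction hypothesis at $t - 1$ (whose hypotheses are implied by those at $t$) gives $\varepsilon_{j}(\inv(A^{(t - 1)})) = 0$ for $d_{t - 1} \leq j < n$. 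Since $d_t \geq d_{t - 1} - 1$ and $d_t \geq 1$, we get $\varepsilon_i(c) = 0$ for $d_t \leq i \leq n - 2$. For $t = 1$, the tableau $T = A^{(0)}$ gives a classically highest $\inv(T)$ (it is the highest element of $\mathcal C_T$), so $\varepsilon_i(c) = 0$ for all $1 \leq i \leq n - 2$. This starts the induction.

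\emph{Step 2: the Levi lemma.} It remains to prove the following claim. Suppose $c$ lies in a type $A$ crystal (a tensor product of rows, hence a normal crystal) and satisfies $\varepsilon_i(c) = 0$ for $d \leq i \leq n - 2$. Then $h = \widetilde e_d^{\max}\cdots\widetilde e_{n - 1}^{\max}c$ satisfies $\varepsilon_i(h) = 0$ for all $d \leq i \leq n - 1$. I would prove by descending induction on $k$ that
\[
	g_k = \widetilde e_k^{\max}\cdots\widetilde e_{n - 1}^{\max}c
\]
is highest for the indices in $[k, n - 1]$ and also for those in $[d, k - 2]$. The indices in $[d, k - 2]$ are nonadjacent to those already applied, so by the Stembridge axioms for normal crystals, $\widetilde e_j$ does not change $\varepsilon_i$ when $|i - j| > 1$. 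The real point is the adjacent index: applying $\widetilde e_{k - 1}^{\max}$ to an element that is highest for $[k, n - 1]$ must keep $\varepsilon_k = 0$. I plan to check this by passing to $P(g_k)$ in the tableau model, since classical operators act on $P$ and fix $Q$. There, highest for $[k, n - 1]$ means the letters $\geq k$ form a skew subtableau whose rectification is Yamanouchi in the alphabet $\{k, \ldots, n\}$. Applying $\widetilde e_{k - 1}^{\max}$ turns every unpaired $k$ into $k - 1$. It also leaves the $k+1$'s still bracketed by the remaining $k$'s: the unpaired $k$'s are exactly those not needed to match $k+1$'s, since the pairing for index $k - 1$ only uses $k$'s to the right of unmatched $(k-1)$'s. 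Because $\varepsilon_k(g_k) = 0$, every $k + 1$ is already matched by an earlier $k$. I expect this adjacent-index bracketing argument to be the main obstacle. If the direct signature argument becomes messy, I would instead invoke the standard fact that, within a $\mathfrak{gl}$-Levi highest weight crystal, $\widetilde e_d^{\max}\cdots\widetilde e_{n - 1}^{\max}$ maps elements that are highest for the sub-Levi $[d, n - 2]$ to highest weight elements. This is the Demazure/key property used in \cite{BlaMorPun2024,Bla2020}.

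Combining Step 2 with $d = d_t$ and the conclusion of Step 1 yields $\varepsilon_i(\inv(A^{(t)})) = 0$ for $d_t \leq i < n$, completing the induction.
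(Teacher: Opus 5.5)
Your argument is the paper's proof transported through $\inv$. Step~1 is the crystal form of the identity $(\kat A^{(t-1)})_i=\dec(A^{(t-1)}_{i+1})$ for $d_t\le i<n$, combined with $d_t+1\ge d_{t-1}$. Step~2 is the crystal form of the paper's assertion that $P_{d_t}\cdots P_{n-1}$ column inserts the last row into the tableau formed by rows $d_t,\ldots,n-1$, so that rows $d_t,\ldots,n$ of $A^{(t)}$ form a tableau. Step~1 is correct as written.

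The problem is your direct proof of Step~2. The invariant ``$g_k$ is highest for $[k,n-1]$ and $[d,k-2]$'' does not yield the adjacent-index step. Your bracketing claim, that the unpaired $k$'s are never needed to match $(k+1)$'s, is false in general. Take $n=3$ and $d=1$, and let $g=3\otimes 2$. Then $\varepsilon_2(g)=0$ and the condition on $[1,0]$ is vacuous, but $\widetilde e_1^{\max}g=3\otimes 1$ has $\varepsilon_2=1$. Such a $g$ cannot occur in your chain: $\varphi_2(g)=0$ would force $c=g$, whereas $\varepsilon_1(g)=1$. So the inductive statement must retain more information about $c$ than your invariant does.

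There are two ways to close this. First, you can carry the full hypothesis on $c$ explicitly. In the component of $c$ for the Levi $[d,n-1]$, relabel the letters $d,\ldots,n$ as $1,\ldots,m$ and identify the component with tableaux of some shape $\lambda$. Being highest for $[d,n-2]$ then means that the entries less than $m$ form $Y_\nu$ and the $m$'s fill a horizontal strip $\lambda/\nu$. With indices relabeled accordingly, row $j$ of $g_k$ is $j^{\lambda_j}$ for $j\ge k$ and $j^{\nu_j}k^{\lambda_j-\nu_j}$ for $j<k$. The inequality $\lambda_k\le\nu_{k-1}$ keeps the $k$'s of row $k$ paired for index $k-1$, and these are exactly the $k$'s matching the $(k+1)$'s, so they survive $\widetilde e_{k-1}^{\max}$.

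Second, your fallback is valid and easy to justify. The word $s_ds_{d+1}\cdots s_{n-1}\,w_0^{[d,n-2]}$ is a reduced word for the longest element of the Levi $[d,n-1]$. The maximal raisings for the $w_0^{[d,n-2]}$ part fix $c$. Maximal raisings along any reduced word of a longest element end at the highest element; this is the fact the paper invokes via \cite[Proposition~6.11(iv)]{BlaMorPun2024} in the proof of \cref{thm:main}. With either fix, your proof is complete and at the paper's level of detail, since the paper also takes the column-insertion property of $P_{d_t}\cdots P_{n-1}$ as known.
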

\begin{proof}
It is enough to show that rows $d_t, \ldots, n$ of $A^{(t)}$ form a tableau.
We prove this by induction on $t$.
Since $A^{(t - 1)}$ satisfies the first-row condition, we have $(\kat A^{(t - 1)})_i = \dec(A^{(t - 1)}_{i + 1})$ for $d_t \leq i < n$.
For $t = 1$, $A^{(0)}$ is a tableau.
For $t \geq 2$, rows $d_{t - 1}, \ldots, n$ of $A^{(t - 1)}$ form a tableau by induction, and $d_t + 1 \geq d_{t - 1}$.
Since rows $d_t, \ldots, n - 1$ of $\kat(A^{(t - 1)})$ form a tableau, the operator $P_{d_t}\cdots P_{n - 1}$ column inserts $(\kat A^{(t - 1)})_n$ into this tableau.
Thus rows $d_t, \ldots, n$ of $A^{(t)}$ form a tableau.
\end{proof}

\begin{prop}\label{prop:two-sided-strict}
We assume that $n \geq 3$, $\Psi \neq \emptyset$, and $\mu_1 > \cdots > \mu_n$.
Then $T$ is $\Psi$-katabolizable if and only if $T^{\leq n - 1}$ is $(\Psi \cap \Delta_{n - 1}^+)$-katabolizable and $\widehat T_{n - 1}$ is $\Psi_{> 1}$-katabolizable.
\end{prop}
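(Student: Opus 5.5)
We treat the two implications separately. The forward implication should follow from results already proved. The converse reduces, via \cref{lem:prefix-reduction}, to checking the first-row condition for the single tabloid $A^{(n-1)}$, and we obtain that condition by transporting information from $\widehat T_{n-1}$ down the katabolism sequence with \cref{lem:strict-flags}.

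\emph{Forward direction.} Suppose $T$ is $\Psi$-katabolizable. Then every $A^{(t)}$ satisfies the first-row condition, so \cref{lem:prefix-reduction} with $\ell = n$ and $m = n-1$ shows that $T^{\leq n-1}$ is $(\Psi\cap\Delta_{n-1}^+)$-katabolizable. Since $\mu_1 > \mu_2$, \cref{lem:strict-suffix} shows that $\widehat T_{n-1}$ is $\Psi_{>1}$-katabolizable.

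\emph{Converse, reduction to one step.} Assume both hypotheses. The tableau $A^{(0)} = T$ satisfies the first-row condition. By \cref{lem:prefix-reduction}, so do $A^{(1)}, \ldots, A^{(n-2)}$. It remains to treat $A^{(n-1)}$. If $d_{n-1} = 1$, the map $F_{1,n}$ column inserts everything into a single tableau with entries in $\{1\}$, so all $1$'s lie in the first row and we are done. Hence we assume $d_{n-1} = 2$. By \cref{lem:top-rows} with $t = n-2$ and $j = 2$, the first-row condition for $A^{(n-1)}$ is equivalent to every entry $2$ of $A^{(n-2)}$ lying in row $2$.

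\emph{Key transport step (main obstacle).} First we extract from the suffix hypothesis the statement $\widehat T_2 = Y_{(\mu_{n-1},\mu_n)}$. Because $\mu$ is strictly decreasing, we apply \cref{lem:strict-suffix} repeatedly, starting from $\widehat T_{n-1}$. This uses the compatibility $\widehat{(\widehat T_m)}_{m-1} = \widehat T_{m-1}$, which follows from the Coxeter relations of the LS action and the fact that restriction to the letters $\leq m-1$ commutes with the relevant $w_0^{(m)}$-actions; this compatibility must be checked carefully. The iteration yields that $\widehat T_2$ is $\Psi_{>n-2}$-katabolizable. Since $d_{n-1} = 2$, we have $\Psi_{>n-2} = \emptyset$, so the remark following \cref{lem:top-rows} gives $\widehat T_2 = Y_{(\mu_{n-1},\mu_n)}$. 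This is exactly condition (1) of \cref{lem:strict-flags} for $A = T$, $m = n$, $r = 2$. We then apply \cref{lem:strict-flags} successively to $A^{(0)}, A^{(1)}, \ldots, A^{(n-3)}$. At step $t$ the tabloid $A^{(t)}$ has entries in $[n-t]$, partition content $(\mu_{t+1},\ldots,\mu_n)$ with strict first inequality, and satisfies the first-row condition, so the hypotheses hold. At the last application ($m = 3$, $r = 2$), condition (2) reads $w_0^{(2)}\cdot P(A^{(n-2)}) = w_0^{(2)}\cdot Y_{(\mu_{n-1},\mu_n)}$, that is, $P(A^{(n-2)}) = Y_{(\mu_{n-1},\mu_n)}$. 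I expect this bookkeeping, namely the iterated restriction identity and the verification of the hypotheses at each stage, to be the main point requiring care.

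\emph{Conclusion.} We have $P(A^{(n-2)}) = Y_{(\mu_{n-1},\mu_n)}$. Because $A^{(n-2)}$ satisfies the first-row condition, its first row is $1^{\mu_{n-1}}2^c$, and this row is read last in $\rw(A^{(n-2)})$. The first row of $P(\rw(A^{(n-2)}))$ therefore has length at least $\mu_{n-1} + c$. But the first row of the Yamanouchi tableau has length exactly $\mu_{n-1}$, which forces $c = 0$. The remaining $2$'s lie in rows $\geq 2$, and the shape of $Y_{(\mu_{n-1},\mu_n)}$, which has only two rows, should force them into row $2$. I would confirm this last point using \cref{lem:trace-highest}: it shows that rows $d_{n-2}, \ldots, n$ of $A^{(n-2)}$ form a tableau, and any $2$'s outside row $2$ would produce a third row in the insertion tableau. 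Hence every $2$ of $A^{(n-2)}$ lies in row $2$, \cref{lem:top-rows} gives the first-row condition for $A^{(n-1)}$, and $T$ is $\Psi$-katabolizable.
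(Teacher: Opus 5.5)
Your forward direction, the reduction to the first-row condition for $A^{(n-1)}$, the case $d_{n-1}=1$, the two-letter transport giving $P(A^{(n-2)})=Y_{(\mu_{n-1},\mu_n)}$, and the deduction that row $1$ of $A^{(n-2)}$ is $1^{\mu_{n-1}}$ are all sound. The identity $\widehat{(\widehat T_m)}_{m-1}=\widehat T_{m-1}$ only needs $w_0^{(m)}\cdot\widehat T_m=(w_0\cdot T)^{\le m}$, i.e.\ $(w_0^{(m)})^2=\id$; no commutation of restriction with $w_0^{(m)}$ is involved.

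\textbf{The gap is your final step.} Since $A^{(n-2)}$ has entries in $\{1,2\}$, its insertion tableau can never have a third row. Once row $1$ is $1^{\mu_{n-1}}$, one gets $P(\rw(A^{(n-2)}))=Y_{(\mu_{n-1},\mu_n)}$ however the $2$'s are distributed among rows $2,\ldots,n$. So $P(A^{(n-2)})$ says nothing about where those $2$'s sit. \Cref{lem:trace-highest} only gives that rows $d_{n-2},\ldots,n$ form a tableau, and this forces the $2$'s into row $2$ only when $d_{n-2}\le 2$. But $d_{n-1}=2$ allows $d_{n-2}=3$. For example, take $n=4$ and $\Psi=\{(1,2),(1,3),(1,4)\}$, so $\dvec=(1,3,2)$. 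The tabloid $\langle 1^{\mu_3};\emptyset;2^{\mu_4};\emptyset\rangle$ has every property you establish for $A^{(2)}$, yet its $2$'s lie in row $3$. Here a $2$ in row $3$ of $A^{(2)}$ comes from a $3$ in row $1$ of $A^{(1)}$, which two-letter data at stage $n-2$ cannot detect. So your argument is complete exactly when $h_{n-2}<n$.

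\textbf{The missing idea is to let $r$ depend on the last block of $\Psi$, not to fix $r=2$.} Take $s=\max\{i:h_i<n\}$ and $r=n-s$. Then $d_s\le r$, $d_{s+a}=r-a+1$ for $1\le a<r$, and $\Psi_{>s}=\emptyset$. The argument then runs as follows.
\begin{enumerate}
\item Iterating \cref{lem:strict-suffix} down to $\widehat T_r$ gives $\widehat T_r=Y_{\mu'}$ with $\mu'=(\mu_{s+1},\ldots,\mu_n)$.
\item Applying \cref{lem:strict-flags} with this $r$, only for $0\le t<s$, gives $P(A^{(s)})=Y_{\mu'}$.
\item \Cref{lem:top-rows} at $t=s$, with the first-row conditions of $A^{(s+j-1)}$ for $2\le j<r$, puts every letter $j<r$ of $A^{(s)}$ in row $j$. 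Hence $\inv(A^{(s)})=z\otimes b'$ with $b'=(r-1)^{\mu'_{r-1}}\otimes\cdots\otimes 1^{\mu'_1}$.
\item Membership in $\mathcal C_{Y_{\mu'}}$ forces $z$ to have no letters below $r$.
\item \Cref{lem:trace-highest} at $t=s$, which is where $d_s\le r$ is used, gives $\varepsilon_i(z)=\varepsilon_i(\inv(A^{(s)}))=0$ for $r\le i<n$. Therefore $z=r^{\mu'_r}$.
\item \Cref{lem:top-rows} with $t=s$ and $j=r$ then gives the first-row condition for $A^{(n-1)}$.
\end{enumerate}
Your argument is the special case $s=n-2$.
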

\begin{proof}
The ``only if'' direction follows from \cref{lem:prefix-reduction} with $m = n - 1$ and \cref{lem:strict-suffix}.
For the ``if'' direction, \cref{lem:prefix-reduction} with $m = n - 1$ gives the first-row condition for $A^{(t)}$ with $1 \leq t \leq n - 2$.
Since $A^{(0)} = T$ is semistandard, we only need to prove that $A^{(n - 1)}$ satisfies the first-row condition.
For the Hessenberg function $h$ of $\Psi$, we set $s = \max\{i : h_i < n\}$ and $r = n - s$.
Since $\Psi \neq \emptyset$, we have $h_1 < n$, so $1 \leq s \leq n - 1$.
Also, we have
\[
	d_s \leq r, \qquad d_{s + a} = r - a + 1\quad(1 \leq a < r),
	\qquad \Psi_{> s} = \emptyset.
\]
If $r = 1$, then $d_{n - 1} = 1$, so \cref{lem:trace-highest} with $t = n - 1$ gives $\inv(A^{(n - 1)}) = 1^{\mu_n}$, as required.

We now assume $r \geq 2$.
For $1 \leq j \leq m \leq n$, the definition of $\widehat T_m$ and $(w_0^{(m)})^2 = \id$ give
\begin{align*}
	\widehat{(\widehat T_m)}_j
	&= w_0^{(j)} \cdot \bigl((w_0^{(m)} \cdot \widehat T_m)^{\leq j}\bigr)\\
	&= w_0^{(j)} \cdot \bigl(((w_0 \cdot T)^{\leq m})^{\leq j}\bigr)\\
	&= w_0^{(j)} \cdot \bigl((w_0 \cdot T)^{\leq j}\bigr)
	= \widehat T_j.
\end{align*}
For $r < m \leq n - 1$, \cref{lem:strict-suffix} shows that if $\widehat T_m$ is $\Psi_{> n - m}$-katabolizable, then $\widehat T_{m - 1}$ is $\Psi_{> n - m + 1}$-katabolizable, since $(\Psi_{> n - m})_{> 1} = \Psi_{> n - m + 1}$.
Applying this for $m = n - 1, \ldots, r + 1$ shows that $\widehat T_r$ is katabolizable for the empty ideal $\Psi_{> s}$.
Since $\widehat T_r$ has content $\mu' = (\mu_{s + 1}, \ldots, \mu_n)$, we have $\widehat T_r = Y_{\mu'}$ and $(w_0 \cdot T)^{\leq r} = w_0^{(r)} \cdot Y_{\mu'}$.

For $0 \leq t < s$, the tabloid $A^{(t)}$ has content $(\mu_{t + 1}, \ldots, \mu_n)$ and satisfies the first-row condition.
\Cref{lem:strict-flags} with $m = n - t$ and $d = d_{t + 1}$ implies that if $(w_0^{(n - t)} \cdot P(A^{(t)}))^{\leq r} = w_0^{(r)} \cdot Y_{\mu'}$, then $(w_0^{(n - t - 1)} \cdot P(A^{(t + 1)}))^{\leq r} = w_0^{(r)} \cdot Y_{\mu'}$.
Therefore, starting with $A^{(0)} = T$, repeated application for $t = 0, \ldots, s - 1$ gives $(w_0^{(r)} \cdot P(A^{(s)}))^{\leq r} = w_0^{(r)} \cdot Y_{\mu'}$.
Since $A^{(s)}$ has entries in $[r]$, we obtain $P(A^{(s)}) = Y_{\mu'}$.

We claim that every entry $r$ of $A^{(s)}$ lies in row $r$.
For $2 \leq j < r$, the tabloid $A^{(s + j - 1)}$ satisfies the first-row condition since $s + j - 1 \leq n - 2$.
Since $d_{s + a} = r - a + 1$ for $1 \leq a < r$, \cref{lem:top-rows} with $t = s$ shows that every entry $j$ of $A^{(s)}$ lies in row $j$ for these $j$.
Since $A^{(s)}$ also satisfies the first-row condition, we have
\[
	b := \inv(A^{(s)}) = z \otimes b', \qquad
	b' = (r - 1)^{\mu'_{r - 1}} \otimes \cdots \otimes 1^{\mu'_1},
\]
where $z \in B^{1, \mu'_r}$.
Since $Q(b) = P(A^{(s)}) = Y_{\mu'}$, we have $b \in \mathcal C_{Y_{\mu'}}$.
The element $b$ is obtained from $r^{\mu'_r} \otimes b'$ by applying $\widetilde f_i$ with $1 \leq i < n$, so $\wt_i(z) = 0$ for $1 \leq i < r$.
Since $d_s \leq r$, \cref{lem:trace-highest} gives $\varepsilon_i(b) = 0$ for $r \leq i < n$.
For $r \leq i < n$, we have $\varepsilon_i(b') = \varphi_i(b') = 0$, so
\[
	\wt_{i + 1}(z) = \varepsilon_i(z) = \varepsilon_i(b) = 0.
\]
Thus $z = r^{\mu'_r}$, so $A^{(n - 1)}$ satisfies the first-row condition by \cref{lem:top-rows} with $t = s$ and $j = r$.
Therefore $T$ is $\Psi$-katabolizable.
\end{proof}

For a tableau $V$ with entries in $[n]$ and width at most $k$, and for $1 \leq j \leq n$, column complementation satisfies
\begin{equation}\label{eq:D-restriction}
	(D_{n, k}V)^{\leq j} = D_{j, k}(V^{\leq j}),
\end{equation}
since $([n] \setminus C) \cap [j] = [j] \setminus (C \cap [j])$ for each column $C$.

\begin{prop}\label{prop:strict-transport}
We assume that $\mu_1 > \cdots > \mu_n$.
Then $T$ is $\Psi$-katabolizable if and only if $\Phi_{n, k}T$ is $\Psi^\dagger$-katabolizable.
\end{prop}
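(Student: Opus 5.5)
We argue by induction on $n$, using \cref{prop:two-sided-strict} to split katabolizability of $T$ into conditions on two smaller tableaux. The key observation is that $\Phi_{n,k}$ swaps the two pieces $T^{\leq n-1}$ and $\widehat T_{n-1}$, up to $\Phi_{n-1,k}$.

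\emph{Base and degenerate cases.}
For $n=1$ the statement is trivial. For $n=2$ there are two possibilities.
\begin{itemize}
\item If $\Psi=\emptyset$, then $\Psi^\dagger=\emptyset$. By the remark after \cref{lem:top-rows}, the only katabolizable tableau of content $\mu$ is $Y_\mu$, so we must check that $\Phi_{n,k}$ maps Yamanouchi tableaux to Yamanouchi tableaux. Here $D_{n,k}Y_\mu$ has columns of the form $[n]\setminus[j]$ complements, and after $w_0$ it becomes $Y_{\mu^\perp}$. This holds for any $n$, since the target set $\SSYT_n(\lambda^\perp,\mu^\perp)$ restricted to katabolizable tableaux is a single-element set exactly when $\lambda^\perp=\mu^\perp$.
\item If $\Psi=\{(1,2)\}$ is full, every tableau is katabolizable on both sides.
\end{itemize}
So we may assume $n\ge3$ and $\Psi\neq\emptyset$, in which case $\Psi^\dagger\neq\emptyset$ as well. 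The content of $\Phi_{n,k}T$ is $\mu^\perp$, which is again strictly decreasing, so \cref{prop:two-sided-strict} applies on both sides.

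\emph{Matching the root ideals.}
A direct check from the definitions gives
\[
(\Psi\cap\Delta_{n-1}^+)^\dagger=(\Psi^\dagger)_{>1},
\qquad
(\Psi_{>1})^\dagger=\Psi^\dagger\cap\Delta_{n-1}^+,
\]
where each $\dagger$ is taken in $\Delta_{n-1}^+$. Indeed, $(i,j)\mapsto(n+1-j,n+1-i)$ sends $\Delta_{n-1}^+$, i.e.\ indices $\le n-1$, to pairs with indices $\ge2$, and then we shift down by one.

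\emph{Matching the tableaux.}
This is the main step. We want the identities
\[
\widehat{(\Phi_{n,k}T)}_{n-1}=\Phi_{n-1,k}\bigl(T^{\leq n-1}\bigr),
\qquad
(\Phi_{n,k}T)^{\leq n-1}=\Phi_{n-1,k}\bigl(\widehat T_{n-1}\bigr).
\]
The first identity should follow directly from the commutation. We have
\[
\widehat{(\Phi_{n,k}T)}_{n-1}=w_0^{(n-1)}\cdot\bigl((w_0\cdot w_0\cdot D_{n,k}T)^{\leq n-1}\bigr)=w_0^{(n-1)}\cdot\bigl((D_{n,k}T)^{\leq n-1}\bigr),
\]
and by \eqref{eq:D-restriction} this equals $w_0^{(n-1)}\cdot D_{n-1,k}(T^{\leq n-1})=\Phi_{n-1,k}(T^{\leq n-1})$.

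The second identity follows from the first. Apply the first identity to $T'=\Phi_{n,k}T$, use $\Phi_{n,k}^2=\id$ (\cref{lem:D-crystal}), and then apply $\Phi_{n-1,k}$ to both sides, using $\Phi_{n-1,k}^2=\id$. The required width bounds hold because all the tableaux involved have width at most $k$.

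\emph{Conclusion.}
By \cref{prop:two-sided-strict}, $T$ is $\Psi$-katabolizable if and only if $T^{\leq n-1}$ is $(\Psi\cap\Delta_{n-1}^+)$-katabolizable and $\widehat T_{n-1}$ is $\Psi_{>1}$-katabolizable. Both of these tableaux have strictly decreasing partition content, so the induction hypothesis in rank $n-1$ applies. It converts the two conditions into $\Phi_{n-1,k}(T^{\leq n-1})$ being $(\Psi^\dagger)_{>1}$-katabolizable and $\Phi_{n-1,k}(\widehat T_{n-1})$ being $(\Psi^\dagger\cap\Delta_{n-1}^+)$-katabolizable. By the two identities above, these are exactly the two conditions that \cref{prop:two-sided-strict} requires for $\Phi_{n,k}T$ with respect to $\Psi^\dagger$.

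The main obstacle I expect is bookkeeping rather than new ideas. First, one must make sure the LS actions $w_0$ and $w_0^{(n-1)}$ interact correctly with restriction $(\cdot)^{\leq n-1}$; restriction is applied only after $w_0$ has been absorbed, so this should be fine. Second, one must verify carefully that the $\dagger$ operation and the passage to subideals are compatible as claimed.
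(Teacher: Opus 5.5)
Your proposal is correct and is essentially the paper's proof. Both arguments use induction on $n$, settle the degenerate cases $\Psi=\emptyset$ and $n\le 2$ directly, establish the identities $(\Phi_{n,k}T)^{\leq n-1}=\Phi_{n-1,k}\widehat T_{n-1}$ and $\widehat{(\Phi_{n,k}T)}_{n-1}=\Phi_{n-1,k}(T^{\leq n-1})$ together with the reflection identities for the subideals, and apply \cref{prop:two-sided-strict} on both sides.

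The differences are cosmetic. You chain equivalences in both directions, whereas the paper proves one implication and invokes $\Phi_{n,k}^2=\id$. You obtain the second tableau identity from the first by involutivity, whereas the paper computes it via \cref{lem:D-crystal} and \eqref{eq:D-restriction}. For $\Psi=\emptyset$, the cleanest justification of $\Phi_{n,k}Y_\mu=Y_{\mu^\perp}$ is that $\Phi_{n,k}$ maps the singleton $\SSYT_n(\mu,\mu)=\{Y_\mu\}$ bijectively onto $\SSYT_n(\mu^\perp,\mu^\perp)=\{Y_{\mu^\perp}\}$.
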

\begin{proof}
Since $\Phi_{n, k}^2 = \id$ and $(\Psi^\dagger)^\dagger = \Psi$, it suffices to show that $\Phi_{n, k}T$ is $\Psi^\dagger$-katabolizable if $T$ is $\Psi$-katabolizable.
We use induction on $n$.
The case $n = 1$ is immediate.
For $n \geq 2$ and $\Psi = \emptyset$, \cref{lem:top-rows} gives $T = Y_\mu$, so $\Phi_{n, k}T = Y_{\mu^\perp}$ is $\emptyset$-katabolizable.
For $n = 2$ and $\Psi \neq \emptyset$, $\Psi = \Psi^\dagger = \Delta_2^+$ implies that every tableau is $\Psi$-katabolizable.

For $n \geq 3$ and $\Psi \neq \emptyset$, let $U = \Phi_{n, k}T$.
By \eqref{eq:D-restriction} and \cref{lem:D-crystal}, we have
\begin{align*}
	U^{\leq n - 1}
	&= (D_{n, k}(w_0 \cdot T))^{\leq n - 1}\\
	&= D_{n - 1, k}((w_0 \cdot T)^{\leq n - 1})
	= \Phi_{n - 1, k}\widehat T_{n - 1}, \\
	\widehat U_{n - 1}
	&= w_0^{(n - 1)} \cdot \bigl((D_{n, k}T)^{\leq n - 1}\bigr)\\
	&= w_0^{(n - 1)} \cdot D_{n - 1, k}(T^{\leq n - 1})
	= \Phi_{n - 1, k}(T^{\leq n - 1}).
\end{align*}

Note that $T^{\leq n - 1}$ is $(\Psi \cap \Delta_{n - 1}^+)$-katabolizable and $\widehat T_{n - 1}$ is $\Psi_{> 1}$-katabolizable.
Antidiagonal reflection gives
\[
	\Psi^\dagger \cap \Delta_{n - 1}^+ = (\Psi_{> 1})^\dagger,
	\qquad
	(\Psi^\dagger)_{> 1} = (\Psi \cap \Delta_{n - 1}^+)^\dagger.
\]
By the induction hypothesis, $U^{\leq n - 1}$ is $(\Psi^\dagger \cap \Delta_{n - 1}^+)$-katabolizable and $\widehat U_{n - 1}$ is $(\Psi^\dagger)_{> 1}$-katabolizable.
Thus $U$ is $\Psi^\dagger$-katabolizable by \cref{prop:two-sided-strict}.
\end{proof}

\begin{rmk}\label{rem:non-strict-criterion}
The converse in \cref{prop:two-sided-strict} can fail when the content is not strictly decreasing.
For
\[
	\Psi = \{(1, 3), (1, 4), (2, 3), (2, 4)\}, \qquad
	T = \begin{ytableau}1&3&4\\2\end{ytableau},
\]
we have $\mu = (1, 1, 1, 1)$ and $\dvec = (2, 1, 2)$.
The tableau
\[
	T^{\leq 3} = \widehat T_3 = \begin{ytableau}1&3\\2\end{ytableau}
\]
is katabolizable for both $\Psi \cap \Delta_3^+ = \{(1, 3), (2, 3)\}$ and $\Psi_{> 1} = \{(1, 2), (1, 3)\}$.
However, the $\Psi$-katabolism sequence of $T$ is
\[
	\langle134; 2; \emptyset; \emptyset\rangle
	\longrightarrow \langle1; 23; \emptyset; \emptyset\rangle
	\longrightarrow \langle12; \emptyset; \emptyset; \emptyset\rangle
	\longrightarrow \langle\emptyset; 1; \emptyset; \emptyset\rangle.
\]
The last tabloid has a $1$ in the second row, so $T$ is not $\Psi$-katabolizable.
\end{rmk}

\subsection{Arbitrary partition content}\label{subsec:finite-transport}

We extend the strictly decreasing case to arbitrary partition content using the piecewise-linearity of tableau and crystal operations.

For a tabloid $A$ and $b = b_p \otimes \cdots \otimes b_1 \in B^\alpha$, we define $[A]_{i, j} = \wt_j(A_i)$ and $[b]_{i, j} = \wt_j(b_i)$.

A \emph{rational polyhedral cone} in $\mathbb R^d$ is a set
\[
	C = \{x \in \mathbb R^d : Hx \geq 0\},
	\qquad H \in \mathbb Q^{r \times d}.
\]
We consider piecewise-linear maps
\[
	f : \bigcup_{j = 1}^{c}C_j \longrightarrow \mathbb R^e,
	\qquad f(x) = M_jx\quad(x \in C_j),
\]
where each $C_j$ is a rational polyhedral cone, $M_j \in \mathbb Z^{e \times d}$, and the formulas agree on intersections.
We use their restrictions to the lattice points $\bigcup_{j = 1}^{c}(C_j \cap \mathbb Z^d)$.
Piecewise-linear maps are closed under finite compositions.

We have $[\inv(A)] = [A]^T$.
The maps $[b] \mapsto [\tau_\ell^{\pm1}b]$, $[b] \mapsto [\pi b]$, and $(a, [b]) \mapsto [b \otimes 1^a]$ are linear, where $a \in \mathbb Z_{\geq 0}$.
The functions $\varepsilon_i(b)$ and $\varphi_i(b)$ are piecewise-linear in $[b]$ by \eqref{eq:tensor-epsilon} and \eqref{eq:tensor-phi}.
For a tableau $V$ and $j \geq 1$, the partition $\sh_j(V) = \sh(V^{\leq j})$ is padded with zeros to $j$ parts.
For $V$ with entries in $[n]$ and $k \geq \width(V)$, we have
\[
	\sh_j(D_{n, k}V)_i = k - \sh_j(V)_{j + 1 - i}
	\qquad(1 \leq i \leq j \leq n).
\]
Since $\sh_j(V)_i = \sum_{a = 1}^j[V]_{i, a}$ and $[V]_{i, j} = \sh_j(V)_i - \sh_{j - 1}(V)_i$, the map $([V], k) \mapsto [D_{n, k}V]$ is linear.

For $1 \leq i < \ell$, we consider the maps
\begin{equation}\label{eq:coordinate-crystal-maps}
	(a, [b]) \longmapsto \bigl[\widetilde e_i^{\,\min\{a, \varepsilon_i(b)\}}b\bigr],
	\qquad
	(a, [b]) \longmapsto \bigl[\widetilde f_i^{\,\min\{a, \varphi_i(b)\}}b\bigr],
\end{equation}
where $a \in \mathbb Z_{\geq 0}$, $\alpha \in \mathbb Z_{\geq 0}^p$, and $b \in B^\alpha$ has letters in $[\ell]$.

\begin{lem}\label{lem:min-max-operations}
The maps in \eqref{eq:coordinate-crystal-maps} are piecewise-linear.
\end{lem}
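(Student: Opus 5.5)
The approach is to reduce both maps to the action of $\widetilde e_i$ and $\widetilde f_i$ on a single tensor product of rows, and to write that action in the matrix coordinates $[b]_{r,j} = \wt_j(b_r)$ as an explicit formula built from sums, differences, $\min$ and $\max$. Any function obtained from linear forms with integer coefficients by finitely many applications of $\min$ and $\max$ is piecewise-linear in the sense above. The cones are cut out by the finitely many comparisons between the arguments of each $\min$ or $\max$, and on each such cone the function is a single integer linear map. So it suffices to produce such an expression for every entry of $[\widetilde e_i^{\,c}b]$ with $c = \min\{a, \varepsilon_i(b)\}$, and similarly for $\widetilde f_i$.

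First I would handle a single factor. For a row $x \in B^{1,s}$ and $1 \le i < \ell$ we have $\varepsilon_i(x) = \wt_{i+1}(x)$ and $\varphi_i(x) = \wt_i(x)$. The operator $\widetilde e_i^{\,c}$ changes $c$ letters $i+1$ into $i$, for $0 \le c \le \wt_{i+1}(x)$, and this is linear in $(c,[x])$.

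Next I would use the signature rule for $b = b_p \otimes \cdots \otimes b_1$, derived from the tensor convention. Reduce the $i$-signature by cancelling pairs. What survives in each factor $b_r$ is a reduced number $\varepsilon^{\mathrm{red}}_r$ of letters $i+1$ that are still available to $\widetilde e_i$, and applying $\widetilde e_i^{\,c}$ consumes these factor by factor in a fixed order. Iterating \eqref{eq:tensor-epsilon} and \eqref{eq:tensor-phi} gives each partial quantity $\varepsilon_i(b_r \otimes \cdots \otimes b_1)$ and $\varphi_i(b_p \otimes \cdots \otimes b_{r})$ as nested $\max$ expressions in the entries of $[b]$. The reduced counts $\varepsilon^{\mathrm{red}}_r$ are then differences of consecutive such cumulative quantities, so they are min--max expressions as well.

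Let $E_r$ be the cumulative number of available letters in the factors preceding $b_r$ in the order of consumption. Factor $r$ then loses exactly
\[
	c_r = \min\bigl\{\varepsilon^{\mathrm{red}}_r,\ \max\{0, c - E_r\}\bigr\}
\]
letters $i+1$, which become $i$. Here $c = \min\{a, \varepsilon_i(b)\}$ is itself a min--max expression, and $\varepsilon_i(b) = \sum_r \varepsilon^{\mathrm{red}}_r$. Hence
\[
	[\widetilde e_i^{\,c}b]_{r,i} = [b]_{r,i} + c_r,
	\qquad
	[\widetilde e_i^{\,c}b]_{r,i+1} = [b]_{r,i+1} - c_r,
\]
and all other entries are unchanged. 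The case of $\widetilde f_i$ is symmetric, with $\varphi$, letters $i$ turning into $i+1$, and the reversed order of consumption.

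The main obstacle is to check carefully that the ordering claim is correct for the paper's tensor convention, namely that $\widetilde e_i^{\,c}$ acts greedily across factors through the reduced signature with no re-cancellation as letters change. I would justify this by the standard fact that the reduced signature is a word $(+)^{\varphi}(-)^{\varepsilon}$, on which $\widetilde e_i$ acts by changing the outermost unmatched entry. Repeated application therefore moves monotonically through the unmatched letters, and I would verify this by induction on $c$ using \eqref{eq:tensor-epsilon}. Compatibility on intersections of cones is automatic, because the formula is one globally defined function.
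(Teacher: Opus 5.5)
Your proposal is correct and uses the same mechanism as the paper. The tensor product rule distributes the $c$ applications of $\widetilde e_i$ or $\widetilde f_i$ among the factors by $\min$/$\max$ expressions in the $\varepsilon_i$ and $\varphi_i$ of sub-products. The difference is in how this is organized.

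The paper inducts on the number of tensor factors. For $b = x \otimes y$ with $y$ a single row, it reads off from the two-factor rule that $\widetilde f_i^{\,a_f}$ acts $r_f = \min\{a_f, \max(0, \varphi_i(y) - \varepsilon_i(x))\}$ times on $y$ and then $a_f - r_f$ times on $x$. Likewise, $\widetilde e_i^{\,a_e}$ acts $r_e = \min\{a_e, \max(0, \varepsilon_i(x) - \varphi_i(y))\}$ times on $x$ and then on $y$. The induction hypothesis for $x$, applied with a piecewise-linear exponent, finishes the proof. Since only the two-factor rule is used, the induction absorbs your global bookkeeping: the reduced counts $\varepsilon^{\mathrm{red}}_r$, the offsets $E_r$, and the no-re-cancellation claim you identify as the main obstacle. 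None of these has to be checked separately. Your route pays for that check but gives an explicit closed formula for every entry of $[\widetilde e_i^{\,c}b]$.

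One point in that check needs correcting. In the reduced word $(+)^{\varphi}(-)^{\varepsilon}$, $\widetilde e_i$ changes the unmatched $-$ adjacent to the block of $+$'s, not the outermost one. If it changed the outermost one, the new $+$ would cancel against a remaining $-$, and $\varepsilon_i$ would drop by more than one.

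In the paper's convention, $\widetilde e_i$ therefore consumes the factors in the order $b_p, b_{p-1}, \ldots, b_1$, and $\widetilde f_i$ consumes them in the order $b_1, \ldots, b_p$. Hence $E_r = \varepsilon_i(b) - \varepsilon_i(b_r \otimes \cdots \otimes b_1)$. Stability then follows:
\begin{itemize}
\item The number of available letters $i+1$ in $b_r$ is $\max\{0, \varepsilon_i(b_r) - \varphi_i(b_{r-1} \otimes \cdots \otimes b_1)\}$, which depends only on $b_r \otimes \cdots \otimes b_1$.
\item Acting on $b_r$ lowers this count by exactly one and leaves the factors below $b_r$ untouched.
\item It only weakly increases $\varphi_i(b_{r'} \otimes \cdots \otimes b_1)$ for $r' \geq r$, so factors already exhausted stay exhausted.
\end{itemize}
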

\begin{proof}
We use induction on the number of tensor factors.
For $b \in B^{1, s}$, both maps are piecewise-linear by the definitions of $\widetilde e_i$ and $\widetilde f_i$.
For $b = x \otimes y$ with $y \in B^{1, s}$, we set $a_f = \min\{a, \varphi_i(x \otimes y)\}$ and $a_e = \min\{a, \varepsilon_i(x \otimes y)\}$.
Then
\begin{align*}
	\widetilde f_i^{\,a_f}(x \otimes y)
		&= (\widetilde f_i^{\,a_f - r_f}x) \otimes (\widetilde f_i^{\,r_f}y),
	&r_f&= \min\{a_f, \max(0, \varphi_i(y) - \varepsilon_i(x))\},
	\\
	\widetilde e_i^{\,a_e}(x \otimes y)
		&= (\widetilde e_i^{\,r_e}x) \otimes (\widetilde e_i^{\,a_e - r_e}y),
	&r_e&= \min\{a_e, \max(0, \varepsilon_i(x) - \varphi_i(y))\}.
\end{align*}
The induction hypothesis shows that both maps are piecewise-linear.
\end{proof}

Taking $a = \varepsilon_i(b)$ in \cref{lem:min-max-operations} shows that $[b] \mapsto [\widetilde e_i^{\max}b]$ is piecewise-linear.
For the LS action, we view $V$ as the tensor product of its rows, from bottom to top.
The map $[V] \mapsto [s_i \cdot V]$ is then piecewise-linear by the $i$-string description in \cref{subsec:crystal-prelim}.
Hence the maps
\[
	([V], k) \longmapsto [\Phi_{n, k}V],
	\qquad [A] \longmapsto [F_{d, \ell}A]
	\quad(1 \leq d \leq \ell)
\]
are piecewise-linear, where $A$ has $\ell$ rows; the assertion for $F_{d, \ell}$ follows from \eqref{eq:kat-crystal}.

\begin{proof}[Proof of \cref{thm:main}]
The cases $n \leq 2$ follow as in the proof of \cref{prop:strict-transport}.
We assume that $n \geq 3$ and $T$ is $\Psi$-katabolizable.
With alphabet $[n]$, we expand each $\F_w$ in \eqref{eq:root-tail-stages} as $\F_{j_1}\cdots\F_{j_r}$ using a fixed reduced expression $w = s_{j_1}\cdots s_{j_r}$.
For the final operator, we use $w_0 = s_{i_1}\cdots s_{i_s}$.
Let $M = s + \sum_{j = 1}^{n - 1}(n - d_j)$ be the total number of occurrences of $\F_i$ in the recursion.
We assign exponents $\alpha_1, \ldots, \alpha_M$ to these occurrences in their order of application.

Let
\[
	\Omega
	= \{(\nu, \alpha, \kappa) \in \mathbb Z^n \times \mathbb Z_{\geq 0}^{M} \times \mathbb Z_{\geq 0} :
		\nu_1 \geq \cdots \geq \nu_n \geq 0\}.
\]
For $\beta = (\nu, \alpha, \kappa) \in \Omega$, we choose $b(\beta) \in \mathcal D_{\Psi, \nu}$ by following the recursion \eqref{eq:root-tail-stages} with content $\nu$.
At the $j$th occurrence, say $\F_i$, we choose the element $\widetilde f_i^{\,\min\{\alpha_j, \varphi_i(b)\}}b$ from $\F_i\{b\}$, where $b$ is the current tensor.
We define $T(\beta)$ by
\[
	\inv(T(\beta))
	= \widetilde e_{i_1}^{\max}\cdots\widetilde e_{i_s}^{\max}b(\beta).
\]
$\widetilde e_{i_1}^{\max}\cdots\widetilde e_{i_s}^{\max}b(\beta)$ is classically highest~\cite[Proposition~6.11(iv)]{BlaMorPun2024}, so $T(\beta)$ is $\Psi$-katabolizable with content $\nu$ by \cref{prop:BMP-components}.
We set $k(\beta) = \width(T(\beta)) + \kappa = \sum_{j = 1}^{n}[T(\beta)]_{1, j} + \kappa$.

Since $\inv(T) \in \mathcal D_{\Psi, \mu}$, we choose $\alpha^0$ such that
\[
	\beta_0 = (\mu, \alpha^0, k - \width(T)) \in \Omega,
	\qquad b(\beta_0) = \inv(T).
\]
$\inv(T)$ is classically highest, so $T(\beta_0) = T$ and $k(\beta_0) = k$.

For each $\beta \in \Omega$, let $(A^{(t)}(\beta))_{t = 0}^{n - 1}$ be the $\Psi^\dagger$-katabolism sequence of $\Phi_{n, k(\beta)}T(\beta)$, computed through $t = n - 1$ even if a first-row condition fails.
We define $\eta : \Omega \to \mathbb Z_{\geq 0}$ by
\[
	\eta(\beta)
	= \sum_{t = 0}^{n - 1}\sum_{r = 2}^{n}[A^{(t)}(\beta)]_{r, 1}.
\]
Thus $\Phi_{n, k}T$ is $\Psi^\dagger$-katabolizable if and only if $\eta(\beta_0) = 0$.

By \cref{lem:min-max-operations} and closure under composition, $\eta$ is piecewise-linear in $\beta$.
Hence there are finitely many rational polyhedral cones $C_1, \ldots, C_c \subseteq \mathbb R^{n + M + 1}$ such that
\[
	\Omega
	= \bigcup_{j = 1}^{c}\bigl(C_j \cap \mathbb Z^{n + M + 1}\bigr),
\]
and $\eta$ is linear on the lattice points of each $C_j$.
With $\rho = (n - 1, \ldots, 0)$, we set
\[
	\gamma = (\rho, 0^M, 0), \qquad
	\beta_t = t\beta_0 + \gamma
	\qquad(t \in \mathbb Z_{\geq 1}).
\]
For every $t \geq 1$, we have $\beta_t \in \Omega$ and $\wt(T(\beta_t)) = t\mu + \rho$, which is strictly decreasing.
Therefore, \cref{prop:strict-transport} gives $\eta(\beta_t) = 0$ for every $t \geq 1$.

Since the cover is finite, some cone $C$ contains $\beta_t$ for infinitely many $t$.
We write $C = \{x \in \mathbb R^{n + M + 1} : Hx \geq 0\}$.
Then
\[
	0 \leq H\beta_t = tH\beta_0 + H\gamma
\]
for arbitrarily large integers $t$.
Since $H\gamma$ is fixed, each entry of $H\beta_0$ must be nonnegative.
Thus $\beta_0 \in C$.

For any $t$ with $\beta_t \in C$, closure of $C$ under addition gives $\beta_{t + 1} = \beta_t + \beta_0 \in C$.
Since $\eta$ is linear on the lattice points of $C$, we obtain
\[
	\eta(\beta_0)
	= \eta(\beta_{t + 1}) - \eta(\beta_t)
	= 0.
\]
Hence $\Phi_{n, k}T$ is $\Psi^\dagger$-katabolizable.
\end{proof}

\begin{rmk}
Combining \cref{prop:D,lem:D-crystal,prop:ambient-charge,thm:main,thm:BMP} gives a bijective proof of \eqref{eq:intro-catalan-symmetry}.
\end{rmk}

\begin{exmp}\label{ex:charge-not-enough}
The identity map preserves charge but need not send $\Psi$-katabolizable tableaux to $\Psi^\dagger$-katabolizable tableaux.
For $n = 4$ and $k = 3$, we use the ideal $\Psi$ from \cref{ex:root-ideal-dyck}.
Then $\Psi^\dagger = \Delta_4^+ \setminus \{(2, 3), (3, 4)\}$, with $\dvec(\Psi) = (2, 2, 1)$ and $\dvec(\Psi^\dagger) = (1, 2, 2)$.

For $\lambda = (3, 2, 1, 0) = \lambda^\perp$ and $\mu = (2, 2, 1, 1) = \mu^\perp$, the set $\SSYT_4(\lambda, \mu)$ consists of
\[
	T_1 = \begin{ytableau}1&1&2\\2&3\\4\end{ytableau}, \quad
	T_2 = \begin{ytableau}1&1&2\\2&4\\3\end{ytableau}, \quad
	T_3 = \begin{ytableau}1&1&3\\2&2\\4\end{ytableau}, \quad
	T_4 = \begin{ytableau}1&1&4\\2&2\\3\end{ytableau},
\]
with charges $3, 2, 2, 1$, respectively.
The $\Psi$-katabolism sequence of $T_3$ and the $\Psi^\dagger$-katabolism sequence of $T_2$ are
\[
\begin{aligned}
	\langle113; 22; 4; \emptyset\rangle
	& \xrightarrow{F_{2, 4}} \langle11; 23; \emptyset; \emptyset\rangle
	\xrightarrow{F_{2, 4}} \langle12; \emptyset; \emptyset; \emptyset\rangle
	\xrightarrow{F_{1, 4}} \langle1; \emptyset; \emptyset; \emptyset\rangle, \\
	\langle112; 24; 3; \emptyset\rangle
	& \xrightarrow{F_{1, 4}} \langle113; 2; \emptyset; \emptyset\rangle
	\xrightarrow{F_{2, 4}} \langle1; 2; \emptyset; \emptyset\rangle
	\xrightarrow{F_{2, 4}} \langle1; \emptyset; \emptyset; \emptyset\rangle.
\end{aligned}
\]
The tableau $T_2$ is not $\Psi$-katabolizable, since $F_{2, 4}T_2 = \langle13; 12; \emptyset; \emptyset\rangle$.
The $\Psi^\dagger$- and $\Psi$-katabolism sequences of $T_3$ agree after the first and second steps, since $P_1$ fixes $\langle11; 23; \emptyset; \emptyset\rangle$.
In the $\Psi^\dagger$-katabolism sequence, the last step $F_{2, 4}$ gives $\langle\emptyset; 1; \emptyset; \emptyset\rangle$, so $T_3$ is not $\Psi^\dagger$-katabolizable.
Similar computations for $T_1$ and $T_4$ give
\[
	\tset_\Psi(\lambda, \mu) = \{T_3, T_4\},
	\qquad
	\tset_{\Psi^\dagger}(\lambda, \mu) = \{T_2, T_4\}.
\]
By \cref{thm:BMP}, $K^\Psi_{\lambda, \mu}(q) = K^{\Psi^\dagger}_{\lambda, \mu}(q) = q + q^2$.
The involution $\Phi_{4, 3}$ satisfies
\[
	\Phi_{4, 3}T_2 = T_3, \qquad
	\Phi_{4, 3}T_3 = T_2, \qquad
	\Phi_{4, 3}T_i = T_i\quad(i = 1, 4),
\]
so it gives the required bijection.
\end{exmp}

\section*{Acknowledgements}
The author thanks Jaeseong Oh for helpful discussions.
H. Choi was supported by the Basic Science Research Program through the National Research Foundation of Korea (NRF) funded by the Ministry of Education (No.~RS-2026-25561840).

\section*{AI declaration}
The author used ChatGPT (OpenAI) for literature searches and citation checks, coding and computations in SageMath, figure generation, and drafting and polishing the manuscript.
ChatGPT also assisted with developing and checking proof arguments based on the author's ideas and directions.
The author takes full responsibility for the content of this paper.

\begingroup
\emergencystretch=1em
\renewcommand{\bibfont}{\small}
\setlength{\bibitemsep}{1pt}
\setlength{\bibparsep}{0pt}
\printbibliography

@misc{Bla2020,
      title={The {DARK} side of generalized {Demazure} crystals}, 
      author={Jonah Blasiak},
      year={2020},
      eprint={2007.04888},
      options={eprint=true},
      archivePrefix={arXiv},
      primaryClass={math.QA},
      url={https://arxiv.org/abs/2007.04888}, 
}

@article {BlaMorPun2024,
    AUTHOR = {Blasiak, Jonah and Morse, Jennifer and Pun, Anna},
     TITLE = {Demazure crystals and the {S}chur positivity of {C}atalan
              functions},
   JOURNAL = {Invent. Math.},
  FJOURNAL = {Inventiones Mathematicae},
    VOLUME = {236},
      YEAR = {2024},
    NUMBER = {2},
     PAGES = {483-547},
      ISSN = {0020-9910,1432-1297},
   MRCLASS = {05E05 (14N15 33E99 81R50)},
  MRNUMBER = {4728238},
MRREVIEWER = {Domenico\ Senato},
       DOI = {10.1007/s00222-024-01237-5},
       URL = {https://doi.org/10.1007/s00222-024-01237-5},
}

@article {BlaMorPunSum2019,
    AUTHOR = {Blasiak, Jonah and Morse, Jennifer and Pun, Anna and Summers,
              Daniel},
     TITLE = {Catalan functions and {$k$}-{S}chur positivity},
   JOURNAL = {J. Amer. Math. Soc.},
  FJOURNAL = {Journal of the American Mathematical Society},
    VOLUME = {32},
      YEAR = {2019},
    NUMBER = {4},
     PAGES = {921-963},
      ISSN = {0894-0347,1088-6834},
   MRCLASS = {05E05 (05E10)},
  MRNUMBER = {4013737},
MRREVIEWER = {Tanja\ Stojadinovi\'c},
       DOI = {10.1090/jams/921},
       URL = {https://doi.org/10.1090/jams/921},
}

@article {BriOreRos2015,
    AUTHOR = {Briand, Emmanuel and Orellana, Rosa and Rosas, Mercedes},
     TITLE = {Rectangular symmetries for coefficients of symmetric
              functions},
   JOURNAL = {Electron. J. Combin.},
  FJOURNAL = {Electronic Journal of Combinatorics},
    VOLUME = {22},
      YEAR = {2015},
    NUMBER = {3},
     PAGES = {Paper 3.15, 18},
      ISSN = {1077-8926},
   MRCLASS = {05E05 (05E10)},
  MRNUMBER = {3386516},
MRREVIEWER = {David\ J.\ Grabiner},
       DOI = {10.37236/4808},
       URL = {https://doi.org/10.37236/4808},
}

@book {BumSch2017,
    AUTHOR = {Bump, Daniel and Schilling, Anne},
     TITLE = {Crystal bases},
      NOTE = {Representations and combinatorics},
 PUBLISHER = {World Scientific Publishing Co. Pte. Ltd., Hackensack, NJ},
      YEAR = {2017},
     PAGES = {xii+279},
      ISBN = {978-981-4733-44-1},
   MRCLASS = {05-01 (05E10 14T05 17B10)},
  MRNUMBER = {3642318},
       DOI = {10.1142/9876},
       URL = {https://doi.org/10.1142/9876},
}

@book {Ful1997,
    AUTHOR = {Fulton, William},
     TITLE = {Young tableaux},
    SERIES = {London Mathematical Society Student Texts},
    VOLUME = {35},
      NOTE = {With applications to representation theory and geometry},
 PUBLISHER = {Cambridge University Press, Cambridge},
      YEAR = {1997},
     PAGES = {x+260},
      ISBN = {0-521-56144-2; 0-521-56724-6},
   MRCLASS = {05E10 (05E05 05E15 14M15 20G05)},
  MRNUMBER = {1464693},
MRREVIEWER = {Tadeusz\ J\'ozefiak},
}

@book {HonKan2002,
    AUTHOR = {Hong, Jin and Kang, Seok-Jin},
     TITLE = {Introduction to quantum groups and crystal bases},
    SERIES = {Graduate Studies in Mathematics},
    VOLUME = {42},
 PUBLISHER = {American Mathematical Society, Providence, RI},
      YEAR = {2002},
     PAGES = {xviii+307},
      ISBN = {0-8218-2874-6},
   MRCLASS = {17B37 (05E15 81R50 82B23)},
  MRNUMBER = {1881971},
MRREVIEWER = {Anne\ Schilling},
       DOI = {10.1090/gsm/042},
       URL = {https://doi.org/10.1090/gsm/042},
}

@incollection {KanKasMisMiwNakNak1992,
    AUTHOR = {Kang, Seok-Jin and Kashiwara, Masaki and Misra, Kailash C. and
              Miwa, Tetsuji and Nakashima, Toshiki and Nakayashiki, Atsushi},
     TITLE = {Affine crystals and vertex models},
 BOOKTITLE = {Infinite analysis, {P}art {A}, {B} ({K}yoto, 1991)},
    SERIES = {Adv. Ser. Math. Phys.},
    VOLUME = {16},
     PAGES = {449-484},
 PUBLISHER = {World Sci. Publ., River Edge, NJ},
      YEAR = {1992},
      ISBN = {981-02-0955-X},
   MRCLASS = {17B37 (17B67 81R10 82B23)},
  MRNUMBER = {1187560},
MRREVIEWER = {Peter\ J.\ Forrester},
       DOI = {10.1142/s0217751x92003896},
       URL = {https://doi.org/10.1142/s0217751x92003896},
}

@article {Kas1991,
    AUTHOR = {Kashiwara, M.},
     TITLE = {On crystal bases of the {$Q$}-analogue of universal enveloping
              algebras},
   JOURNAL = {Duke Math. J.},
  FJOURNAL = {Duke Mathematical Journal},
    VOLUME = {63},
      YEAR = {1991},
    NUMBER = {2},
     PAGES = {465-516},
      ISSN = {0012-7094,1547-7398},
   MRCLASS = {17B37 (17B10 17B67)},
  MRNUMBER = {1115118},
MRREVIEWER = {Kailash\ C.\ Misra},
       DOI = {10.1215/S0012-7094-91-06321-0},
       URL = {https://doi.org/10.1215/S0012-7094-91-06321-0},
}

@article {Kas1993,
    AUTHOR = {Kashiwara, Masaki},
     TITLE = {The crystal base and {L}ittelmann's refined {D}emazure
              character formula},
   JOURNAL = {Duke Math. J.},
  FJOURNAL = {Duke Mathematical Journal},
    VOLUME = {71},
      YEAR = {1993},
    NUMBER = {3},
     PAGES = {839-858},
      ISSN = {0012-7094,1547-7398},
   MRCLASS = {17B37 (17B10 17B67)},
  MRNUMBER = {1240605},
MRREVIEWER = {Kailash\ C.\ Misra},
       DOI = {10.1215/S0012-7094-93-07131-1},
       URL = {https://doi.org/10.1215/S0012-7094-93-07131-1},
}

@article {KirShi2002,
    AUTHOR = {Kirillov, Anatol N. and Shimozono, Mark},
     TITLE = {A generalization of the {K}ostka-{F}oulkes polynomials},
   JOURNAL = {J. Algebraic Combin.},
  FJOURNAL = {Journal of Algebraic Combinatorics. An International Journal},
    VOLUME = {15},
      YEAR = {2002},
    NUMBER = {1},
     PAGES = {27-69},
      ISSN = {0925-9899,1572-9192},
   MRCLASS = {05E15 (05A30 20G05 82B23)},
  MRNUMBER = {1879658},
MRREVIEWER = {Anne\ Schilling},
       DOI = {10.1023/A:1013269131974},
       URL = {https://doi.org/10.1023/A:1013269131974},
}

@article {Knu1970,
    AUTHOR = {Knuth, Donald E.},
     TITLE = {Permutations, matrices, and generalized {Y}oung tableaux},
   JOURNAL = {Pacific J. Math.},
  FJOURNAL = {Pacific Journal of Mathematics},
    VOLUME = {34},
      YEAR = {1970},
     PAGES = {709-727},
      ISSN = {0030-8730,1945-5844},
   MRCLASS = {05.30},
  MRNUMBER = {272654},
MRREVIEWER = {M.\ Doob},
       URL = {http://projecteuclid.org/euclid.pjm/1102971948},
}

@article {LasSch1978,
    AUTHOR = {Lascoux, Alain and Sch\"utzenberger, Marcel-Paul},
     TITLE = {Sur une conjecture de {H}. {O}. {F}oulkes},
   JOURNAL = {C. R. Acad. Sci. Paris S\'er. A-B},
  FJOURNAL = {Comptes Rendus Hebdomadaires des S\'eances de l'Acad\'emie des
              Sciences. S\'eries A et B},
    VOLUME = {286},
      YEAR = {1978},
    NUMBER = {7},
     PAGES = {A323-A324},
      ISSN = {0151-0509},
   MRCLASS = {20C30 (14M15)},
  MRNUMBER = {472993},
}

@incollection {LasSch1981,
    AUTHOR = {Lascoux, Alain and Sch\"utzenberger, Marcel-P.},
     TITLE = {Le mono\"ide plaxique},
 BOOKTITLE = {Noncommutative structures in algebra and geometric
              combinatorics ({N}aples, 1978)},
    SERIES = {Quad. ``Ricerca Sci.''},
    VOLUME = {109},
     PAGES = {129-156},
 PUBLISHER = {CNR, Rome},
      YEAR = {1981},
   MRCLASS = {20C30 (05A99 20M10)},
  MRNUMBER = {646486},
MRREVIEWER = {Hubert\ Grassmann},
}

@article {Lus1990,
    AUTHOR = {Lusztig, G.},
     TITLE = {Canonical bases arising from quantized enveloping algebras},
   JOURNAL = {J. Amer. Math. Soc.},
  FJOURNAL = {Journal of the American Mathematical Society},
    VOLUME = {3},
      YEAR = {1990},
    NUMBER = {2},
     PAGES = {447-498},
      ISSN = {0894-0347,1088-6834},
   MRCLASS = {17B35 (16A64)},
  MRNUMBER = {1035415},
MRREVIEWER = {Ya.\ S.\ So\u ibel\cprime man},
       DOI = {10.2307/1990961},
       URL = {https://doi.org/10.2307/1990961},
}

@article {NakYam1997,
    AUTHOR = {Nakayashiki, Atsushi and Yamada, Yasuhiko},
     TITLE = {Kostka polynomials and energy functions in solvable lattice
              models},
   JOURNAL = {Selecta Math. (N.S.)},
  FJOURNAL = {Selecta Mathematica. New Series},
    VOLUME = {3},
      YEAR = {1997},
    NUMBER = {4},
     PAGES = {547-599},
      ISSN = {1022-1824,1420-9020},
   MRCLASS = {05E05 (17B10 17B37 81R50 82B23)},
  MRNUMBER = {1613527},
MRREVIEWER = {Kailash\ C.\ Misra},
       DOI = {10.1007/s000290050020},
       URL = {https://doi.org/10.1007/s000290050020},
}

@article {Nao2013,
    AUTHOR = {Naoi, Katsuyuki},
     TITLE = {Demazure crystals and tensor products of perfect
              {K}irillov-{R}eshetikhin crystals with various levels},
   JOURNAL = {J. Algebra},
  FJOURNAL = {Journal of Algebra},
    VOLUME = {374},
      YEAR = {2013},
     PAGES = {1-26},
      ISSN = {0021-8693,1090-266X},
   MRCLASS = {17B37},
  MRNUMBER = {2998791},
MRREVIEWER = {Peter\ W.\ Tingley},
       DOI = {10.1016/j.jalgebra.2012.10.020},
       URL = {https://doi.org/10.1016/j.jalgebra.2012.10.020},
}

@article {SchTin2012,
    AUTHOR = {Schilling, Anne and Tingley, Peter},
     TITLE = {Demazure crystals, {K}irillov-{R}eshetikhin crystals, and the
              energy function},
   JOURNAL = {Electron. J. Combin.},
  FJOURNAL = {Electronic Journal of Combinatorics},
    VOLUME = {19},
      YEAR = {2012},
    NUMBER = {2},
     PAGES = {Paper 4, 42},
      ISSN = {1077-8926},
   MRCLASS = {17B37 (05E05 05E10)},
  MRNUMBER = {2923717},
MRREVIEWER = {Hiro-Fumi\ Yamada},
       DOI = {10.37236/2184},
       URL = {https://doi.org/10.37236/2184},
}

@article {Shi2002Affine,
    AUTHOR = {Shimozono, Mark},
     TITLE = {Affine type {A} crystal structure on tensor products of
              rectangles, {D}emazure characters, and nilpotent varieties},
   JOURNAL = {J. Algebraic Combin.},
  FJOURNAL = {Journal of Algebraic Combinatorics. An International Journal},
    VOLUME = {15},
      YEAR = {2002},
    NUMBER = {2},
     PAGES = {151-187},
      ISSN = {0925-9899,1572-9192},
   MRCLASS = {17B10 (05E10)},
  MRNUMBER = {1887233},
MRREVIEWER = {Stefano\ Capparelli},
       DOI = {10.1023/A:1013894920862},
       URL = {https://doi.org/10.1023/A:1013894920862},
}

@article {ShiWey2000,
    AUTHOR = {Shimozono, Mark and Weyman, Jerzy},
     TITLE = {Graded characters of modules supported in the closure of a
              nilpotent conjugacy class},
   JOURNAL = {European J. Combin.},
  FJOURNAL = {European Journal of Combinatorics},
    VOLUME = {21},
      YEAR = {2000},
    NUMBER = {2},
     PAGES = {257-288},
      ISSN = {0195-6698,1095-9971},
   MRCLASS = {05E15 (20G05)},
  MRNUMBER = {1742440},
       DOI = {10.1006/eujc.1999.0344},
       URL = {https://doi.org/10.1006/eujc.1999.0344},
}

@book {Sta1999,
    AUTHOR = {Stanley, Richard P.},
     TITLE = {Enumerative combinatorics. {V}ol. 2},
    SERIES = {Cambridge Studies in Advanced Mathematics},
    VOLUME = {62},
      NOTE = {With a foreword by Gian-Carlo Rota and appendix 1 by Sergey
              Fomin},
 PUBLISHER = {Cambridge University Press, Cambridge},
      YEAR = {1999},
     PAGES = {xii+581},
      ISBN = {0-521-56069-1; 0-521-78987-7},
   MRCLASS = {05A15 (05-02 05E05 05E10 68R05)},
  MRNUMBER = {1676282},
MRREVIEWER = {Ira\ Gessel},
       DOI = {10.1017/CBO9780511609589},
       URL = {https://doi.org/10.1017/CBO9780511609589},
}
\endgroup

\end{document}